\newtheorem{theorem}{Theorem}
\newtheorem{assumption}[theorem]{Assumption}
\newtheorem{lemma}[theorem]{Lemma}
\theoremstyle{remark}
\newtheorem{remark}[theorem]{Remark}
\let\csname equation*\endcsname\relax 
\let\csname endequation*\endcsname\relax 
\begin{document}

\title{A globally convergent and locally quadratically\\ convergent modified B-semismooth Newton method for $\ell_1$-penalized minimization}

\author{Esther Hans \thanks{Johannes Gutenberg University Mainz, Institute of mathematics, Staudingerweg 9, D-55099 Mainz, Germany. E-Mail: hanse@uni-mainz.de, raasch@uni-mainz.de}   \and Thorsten Raasch \footnotemark[1]}

\date{\today}

\maketitle

\begin{abstract}
We consider the efficient minimization of a nonlinear, strictly convex functional with $\ell_1$-penalty term. Such minimization problems appear in a wide range of applications like Ti\-khonov regularization of (non)linear inverse problems with sparsity constraints. In (2015 \textit{Inverse Problems} \textbf{31} 025005), a globalized Bouligand-semismooth Newton method was presented for $\ell_1$-Tikhonov regularization of linear inverse problems. Nevertheless, a technical assumption on the accumulation point of the sequence of iterates was necessary to prove global convergence. Here, we generalize this method to general nonlinear problems and present a modified semismooth Newton method for which global convergence is proven without any additional requirements. Moreover, under a technical assumption, full Newton steps are eventually accepted and locally quadratic convergence is achieved. Numerical examples from image deblurring and robust regression demonstrate the performance of the method.\vspace{.4cm}\\ 
\textbf{Keywords:} global convergence, semismooth Newton method, $\ell_1$-Tikhonov regularization, inverse problems, sparsity constraints, quadratic convergence
\vspace{.4cm}\\ 
\textbf{Mathematics Subject Classification:} 49M15, 49N45, 90C56
\end{abstract}

\section{Introduction}
We are concerned with the efficient minimization of 
\begin{equation}\label{eq:mininfinitedim}
\min_{\mathbf u\in\ell_2}g(\mathbf u)+\sum\limits_{k=1}^\infty w_k|u_k|,
\end{equation}
where $g\colon \ell_2\to\mathbb R$ is a twice Lipschitz-continuously differentiable and strictly convex functional, $\ell_2=\ell_2(\mathbb N)$ and $\mathbf w=(w_k)_k$ is a positive weight sequence with $w_k\ge w_0>0$. 
Minimization problems of the form \eqref{eq:mininfinitedim} appear in various applications from engineering and natural sciences. A well-known example is Tikhonov regularization for inverse problems with sparsity constraints, e.g.\ medical imaging, geophysics, nondestructive testing or compressed sensing, see e.g.\ \cite{EnHaNe96,FiNoWr07,GrLo08,YaZh11}. Here, one aims to solve a possibly nonlinear ill-posed operator equation $K(\mathbf u)=\mathbf f$, $K\colon\ell_2\to\ell_2$. In practice, one has to reconstruct $\mathbf u\in\ell_2$ from noisy measurement data $\mathbf f^\delta\approx\mathbf f$. In the presence of perturbed data, regularization strategies are required for the stable computation of a numerical solution to an inverse problem \cite{EnHaNe96,ScKaHoKa12}. Applying Tikhonov regularization with sparsity constraints, one minimizes a functional consisting of a suitable discrepancy term $g \colon\ell_2\to\mathbb R$ and a sparsity promoting penalty term, see e.g.\ \cite{DaDeMo04} and the references therein. Sparsity here means the a priori assumption that the unknown solution is sparse, i.e.\ $\mathbf u$ has only few nonzero entries. As an example, in the special case of a linear discrete ill-posed operator equation $K\mathbf u=\mathbf f$, $K\colon\ell_2\to\ell_2$ linear, bounded and injective, $\mathbf f\in\ell_2$, one may choose the discrepancy term $g(\mathbf u):=\frac 12 \|K\mathbf u-\mathbf f\|_{\ell_2}^2$ \cite{EnHaNe96}. For nonlinear inverse problems like parameter identification problems, convex discrepancy terms from energy functional approaches may be considered, see e.g.\ \cite{HaQu10,Kn01,LoMaMu12,MuHaMaPi13}. Sparsity of the Tikhonov minimizer with respect to a given basis can be enforced by using the penalty term in \eqref{eq:mininfinitedim}, where the weights $w_k$ act as regularization parameters, see e.g.\ \cite{GrHaSc11, GrLo08, JiMa12} and the references therein.

In current research, sparsity-promoting regularization techniques are widely used, see e.g.\ \cite{BoBrLoMa07,DaDeMo04,GeKletal12,GrLo08,JiKhMa12,JiMa12,LoMaMu12,MiUl14,MuHaMaPi13} and the references therein. Such recovery schemes usually outperform classical Tikhonov regularization with $\ell_2$ coefficient penalties in terms of reconstruction quality if the unknown solution is sparse w.r.t.\ some basis. This is the case in many parameter identification problems for partial differential equations with piecewise smooth solutions, like electrical impedance tomography \cite{GeKletal12,JiKhMa12} or inverse heat conduction scenarios \cite{BoDaMaRa10}.

There exists a variety of approaches for the numerical minimization of \eqref{eq:mininfinitedim} in the literature. In the special case of a quadratic functional $g$, iterative soft-thresholding \cite{DaDeMo04} as well as related approaches for general functionals $g$ are well-studied, see e.g.\ \cite{BoBrLoMa07,BrLoMa09,RaTe06}. Accelerated methods and gradient-based methods introduced in \cite{BeTe09,FiNoWr07,LoMaMu12,Ne07,WrNoFi09} often gain from clever stepsize choices. Homotopy-type solvers \cite{OsPrTu00} and  alternating direction methods of multipliers \cite{YaZh11} besides many others are also state-of-the-art.

Other popular approaches for the solution of \eqref{eq:mininfinitedim} are semismooth Newton methods \cite{ChNaQi00, Ul02}. A semismooth Newton method and a quasi-Newton method for the minimization of \eqref{eq:mininfinitedim} were proposed by Muoi et al.\ in the infinite-dimensional setting \cite{MuHaMaPi13}, inspired by previous work of Herzog and Lorenz \cite{GrLo08}. If $g$ is convex and smooth, it was shown e.g.\ in \cite{CoWa05, GrLo08, MiUl14}, that $\mathbf u^*\in\ell_2$ is a minimizer of \eqref{eq:mininfinitedim} if and only if $\mathbf u^*$ is a solution to the zero-finding problem
$\mathbf F\colon\mathbb \ell_2\to\ell_2$,
\begin{equation}\label{eq:F}
\mathbf F(\mathbf u):=\mathbf u-\mathbf S_{\gamma\mathbf w}(\mathbf u-\gamma\nabla g(\mathbf u))=\mathbf 0
\end{equation}
for any fixed $\gamma>0$, where $\mathbf S_{\boldsymbol \beta}(\mathbf u):=(\operatorname{sgn}(u_k)(|u_k|-\beta_k)_+)_k$ denotes the  componentwise soft thresholding of $\mathbf u$ with respect to a positive weight sequence $\boldsymbol \beta=(\beta_k)_k$, $\nabla g$ denotes the gradient of $g$ and $x_+=\max\{x,0\}$. In \cite{MuHaMaPi13}, $\mathbf F$ from \eqref{eq:F} was shown to be Newton differentiable, i.e.\ under a suitable assumption on $g$ there exists a family of slanting functions $\mathbf G\colon\ell_2\to L(\ell_2,\ell_2)$ with
\begin{equation}\label{eq:Newtonderivative}
\lim\limits_{\mathbf h\to\mathbf 0}\frac{\|\mathbf F(\mathbf u+\mathbf h)-\mathbf F(\mathbf u)-\mathbf G(\mathbf u+\mathbf h)\mathbf h\|_{\ell_2}}{\|\mathbf h\|_{\ell_2}}=0,
\end{equation}
see also \cite{ChNaQi00,GrLo08, Ul02} for the definition of Newton derivatives.
A local semismooth Newton method was defined in \cite{MuHaMaPi13} by
\begin{align}
\mathbf G(\mathbf u^{(j)})\mathbf d^{(j)}&=-\mathbf F(\mathbf u^{(j)}),\label{eq:SSNmethod}\\
\mathbf u^{(j+1)}&=\mathbf u^{(j)}+\mathbf d^{(j)},\quad j=0,1,\ldots.\label{eq:SSNupdate}
\end{align}
with a specially chosen $\mathbf G$, cf.\ \cite{ChNaQi00, Ul02}. In \cite{MuHaMaPi13}, locally superlinear convergence was proven under suitable assumptions on the functional $g$. 

Nevertheless, the above mentioned semismooth Newton methods are only locally convergent in general. In \cite{MiUl14}, a semismooth Newton method with filter globalization was presented where semismooth Newton steps are combined with damped shrinkage steps. Another globalized semismooth Newton method was developed in \cite{HaRa15}. In loc.\ cit., inspired by \cite{HaPaRa92,ItKu09,Pa90,Qi93}, the method from \cite{GrLo08} was globalized in a finite-dimensional setting for the special case of a quadratic discrepancy term
\begin{equation}\label{eq:min_quadraticg}
\min\limits_{\mathbf u\in\mathbb R^n} \frac{1}{2}\|\mathbf K\mathbf u-\mathbf f\|_2^2+\sum\limits_{k=1}^n w_k|u_k|,
\end{equation}
where $\mathbf K\in \mathbb R^{m\times n}$ is injective and $\mathbf f\in\mathbb R^m$. In \cite{HaRa15}, $\mathbf F$ was shown to be Lipschitz continuous and directionally differentiable, i.e.\ Bouligand differentiable \cite{FaPa03I,Pa90,Sh90}. For such nonlinearities a B(ouligand)-Newton method can be defined \cite{Pa90}, replacing \eqref{eq:SSNmethod} by the generalized Newton equation
\begin{equation}\label{eq:BNewtonequation}
\mathbf F'(\mathbf u^{(j)},\mathbf d^{(j)})=-\mathbf F(\mathbf u^{(j)}).
\end{equation}
In \cite{HaRa15}, the system \eqref{eq:BNewtonequation} was shown to be equivalent to a uniquely solvable mixed linear complementarity problem \cite{CoPaSt09}. By the choice \eqref{eq:BNewtonequation}, $\mathbf d^{(j)}$ automatically is a descent direction with respect to the merit functional $\Theta\colon \mathbb R^n\to\mathbb R$,
\begin{equation}\label{eq:theta}
\Theta(\mathbf u):=\|\mathbf F(\mathbf u)\|_2^ 2,
\end{equation}
cf.\ \cite{Pa90}. Additionally, this Bouligand-Newton method can be interpreted as a semismooth Newton method with a specially chosen slanting function and is therefore called a \textit{B-semismooth Newton method}, cf.\ \cite{QiSu93}. By introducing suitable damping parameters, the method can be shown to be globally convergent under a technical assumption on the in practice unknown accumulation point $\mathbf u^*$ of the sequence of iterates, see also \cite{HaPaRa92,ItKu09,Pa90,Qi93}. Indeed, if the chosen Armijo stepsizes tend to zero, the merit functional $\Theta$ has to fulfill the condition
\begin{equation}\label{eq:convergenceconditionSSN}
\lim\limits_{(\mathbf u,\mathbf v)\to(\mathbf u^*,\mathbf u^*)}\frac{\Theta(\mathbf u)-\Theta(\mathbf v)-\Theta'(\mathbf u^*,\mathbf u-\mathbf v)}{\|\mathbf u-\mathbf v\|_2}=0
\end{equation}
at $\mathbf u^*$ to ensure global convergence.

In this work, we present a modified, globally convergent semismooth Newton method for the minimization problem \eqref{eq:mininfinitedim} in the  finite-di\-men\-sio\-nal setting 
\begin{equation}\label{eq:min}
\min\limits_{\mathbf u\in\mathbb R^n} g(\mathbf u)+\sum\limits_{k=1}^n w_k|u_k|
\end{equation}
for general (not necessarily quadratic) strictly convex functionals $g\colon \mathbb R^n\to \mathbb R$. Our work is inspired by Pang \cite{Pa91}, where a globally and locally quadratically convergent modified Bouligand-Newton method was presented for the solution of variational inequalities, nonlinear complementarity problems and nonlinear programs. We take advantage of similarities of nonlinear complementarity problems and the zero-finding problem \eqref{eq:F} to propose a modified method similar to \cite{Pa91}. Starting out from \cite{HaRa15,MuHaMaPi13}, we develop a globalized B-semismooth Newton method for general possibly nonquadratic discrepancy functionals $g$. In order to achieve global convergence without any requirements on the a priori unknown accumulation point of the iterates, inspired by \cite{Pa91}, we propose a special modification of the Newton directions $\mathbf d^{(j)}$ from \eqref{eq:BNewtonequation}, retaining the descent property w.r.t.\ $\Theta$. 
The resulting generalized Newton equation is again shown to be equivalent to a uniquely solvable mixed linear complementarity problem.
 Fortunately, in our proposed scheme, under a technical assumption, full Newton steps are accepted in the vicinity of the zero of $\mathbf F$. 
 As a consequence, under an additional regularity assumption, locally quadratic convergence is achieved. Additionally, the resulting modified method can be interpreted as a generalized Newton method proposed by Han, Pang and Rangaraj \cite{HaPaRa92}. In a neighborhood of the zero of $\mathbf F$, the modified method, under a technical assumption, coincides with the B-semismooth Newton method from \cite{HaRa15} reformulated for nonquadratic $g$. If $g$ is a quadratic functional, it was shown in \cite{HaRa15} that in a neighborhood of the zero, the B-semismooth Newton method finds the exact zero of $\mathbf F$ within finitely many iterations.

Alternatively, one may consider other globalization strategies as trust region methods or path-search methods instead of the considered line-search damping strategy, see e.g.\ \cite{Ul02,FaPa03II} and the references therein. The path-search globalization strategy proposed by the authors of \cite{Ra94,DiFe95} could be a promising, albeit conceptually different, alternative. These approaches go beyond the scope of this paper and are part of future work.

For the rest of the paper, we require the following assumption on the smoothness of $g$, similar to \cite[Assumption 3.1, Example 3.4]{MuHaMaPi13}. In Section \ref{sec:globalconvergence}, we will need a further assumption regarding the locally quadratic convergence of the method.
\begin{assumption}\label{ass:1}
\begin{enumerate}[label=(A\arabic*)]
\item \label{ass:A2} The function $g$ is twice Lipschitz-continuously differentiable and the Hessian $\nabla^2g(\mathbf u)$ is positive definite for all $\mathbf u\in\mathbb R^n$. Moreover, there exist constants $0<c_1, c_2< \infty$ with 
\begin{equation}\nonumber\label{eq:hessianbounded}
c_1\|\mathbf h\|_2^2\le \langle \nabla^2 g(\mathbf u)\mathbf h,\mathbf h\rangle\le c_2\|\mathbf h\|_2^2,\qquad \text{for all } \mathbf h\in\mathbb R^n,
\end{equation}
uniformly for all $\mathbf u\in\mathbb R^n$.
\item \label{ass:A3}The level sets $L_{\Theta}(\mathbf u^{(0)}):=\{\mathbf u\in\mathbb R^n: \Theta(\mathbf u)\le \Theta(\mathbf u^{(0)})\}$ of $\Theta$ are compact.
\end{enumerate}
\end{assumption}

The compactness of the level sets in the case of a quadratic functional $g(\mathbf u)=\frac{1}{2}\|\mathbf K\mathbf u- \mathbf f\|_2^2$, $\mathbf K\in\mathbb R^{m\times n}$ injective, $n\le m$, $\mathbf f\in\mathbb R^m$ was shown in \cite{HaRa15}. Note that the positive definiteness of the Hessian $\nabla^2g(\mathbf u)$ implies strict convexity of the functional $g$ and ensures unique solvability of \eqref{eq:min}.

The paper is organized as follows. Section \ref{sec:feasibility} treats the proposed B-se\-mi\-smooth Newton method and its modification as well as their feasibility. Section \ref{sec:globalconvergence} addresses the global convergence and the local convergence speed of the methods. Numerical examples demonstrate the performance of the proposed algorithms in Section \ref{sec:numericalexperiments}.

\section{A B-semismooth Newton method and its modification}\label{sec:feasibility}

In this section, we present the algorithm of the B-semismooth Newton method from \cite{HaRa15} generalized to the minimization problem \eqref{eq:min} as well as a modified version and discuss their feasibility. Additionally, we suggest a hybrid method. We start with the modified algorithm because the generalized B-semismooth Newton method can immediately be deduced from the modified method.

\subsection{A modified B-semismooth Newton method and its feasibility}\label{sec:21}

In the following, we introduce a modified B-semismooth Newton method for the solution of \eqref{eq:min}. 
 We denote the  \textit{active set} by $\mathcal A(\mathbf u):=\mathcal A^+(\mathbf u)\cup\mathcal A^-(\mathbf u)$, where
\begin{align}
\mathcal A^+(\mathbf u)&:=\{k: \gamma(\nabla g(\mathbf u))_k+\gamma w_k<u_k\},\label{eq:A+}\\
\mathcal A^-(\mathbf u)&:=\{k: u_k<\gamma(\nabla g(\mathbf u))_k-\gamma w_k\},\label{eq:A-}
\end{align}
and the \textit{inactive set} by $\mathcal I(\mathbf u):=\mathcal I^\circ(\mathbf u)\cup\mathcal I^+(\mathbf u)\cup\mathcal I^-(\mathbf u)$, where
\begin{align}
\mathcal I^\circ(\mathbf u)&:=\{k: \gamma(\nabla g(\mathbf u))_k-\gamma w_k<u_k<\gamma(\nabla g(\mathbf u))_k+\gamma w_k\},\label{eq:I0}\\
\mathcal I^+(\mathbf u)&:=\{k: u_k=\gamma(\nabla g(\mathbf u))_k+\gamma w_k\},\label{eq:Iplus}\\
\mathcal I^-(\mathbf u)&:=\{k: u_k=\gamma(\nabla g(\mathbf u))_k-\gamma w_k\}.\label{eq:I-}
\end{align}
Below, we drop the argument $\mathbf u$ if there is no risk of confusion.

For $\mathbf F\colon\mathbb R^n\to\mathbb R^n$ defined by \eqref{eq:F}, we then have
\begin{align}
F_k(\mathbf u)&=\min\{\gamma(\nabla g(\mathbf u))_k+\gamma w_k, u_k\}, & k\in\mathcal A^+(\mathbf u)\cup\mathcal I^\circ(\mathbf u)\cup\mathcal I^+(\mathbf u),\label{eq:Fmin}\\
F_k(\mathbf u)&=\max\{\gamma(\nabla g(\mathbf u))_k-\gamma w_k, u_k\}, & k\in\mathcal A^-(\mathbf u)\cup\mathcal I^\circ(\mathbf u)\cup\mathcal I^-(\mathbf u).
\end{align}
By Assumption \ref{ass:1}, $\mathbf F$ is Lipschitz continuous and directionally differentiable. The directional de\-ri\-va\-tive of $\mathbf F$ can be easily deduced.
\begin{lemma}\label{lemma:dirderivative}
The directional derivative of $\mathbf F$ at $\mathbf u\in\mathbb R^n$ in the direction $\mathbf d\in\mathbb R^n$ is given elementwise by
\begin{equation}
F'_k(\mathbf u,\mathbf d)=\begin{cases}
\gamma(\nabla^2g(\mathbf u)\mathbf d)_k, & k\in\mathcal A(\mathbf u),\\
d_k, & k\in\mathcal I^\circ(\mathbf u),\\
\min\{\gamma(\nabla^2g(\mathbf u)\mathbf d)_k,d_k\}, & k\in\mathcal I^+(\mathbf u),\\
\max\{\gamma(\nabla^2g(\mathbf u)\mathbf d)_k,d_k\}, & k\in\mathcal I^-(\mathbf u).
\end{cases}
\end{equation}
\end{lemma}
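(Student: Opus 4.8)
The plan is to reduce everything to a scalar computation. Writing $v_k(\mathbf u):=u_k-\gamma(\nabla g(\mathbf u))_k$ and letting $s\colon\mathbb R\to\mathbb R$, $s(x):=\operatorname{sgn}(x)(|x|-\gamma w_k)_+$ denote the scalar soft-thresholding with threshold $\gamma w_k$, definition \eqref{eq:F} yields the pointwise representation $F_k(\mathbf u)=u_k-s(v_k(\mathbf u))$. With this substitution the five index sets are nothing but the location of $v_k$ relative to the two breakpoints $\pm\gamma w_k$: one checks directly that $k\in\mathcal A^+$ iff $v_k>\gamma w_k$, that $k\in\mathcal A^-$ iff $v_k<-\gamma w_k$, that $k\in\mathcal I^\circ$ iff $|v_k|<\gamma w_k$, and that $k\in\mathcal I^\pm$ iff $v_k=\pm\gamma w_k$.

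First I would differentiate the inner map. By Assumption \ref{ass:A2} the gradient $\nabla g$ is continuously differentiable, hence each $v_k$ is continuously differentiable with directional derivative
\[
v_k'(\mathbf u,\mathbf d)=d_k-\gamma(\nabla^2 g(\mathbf u)\mathbf d)_k=:p_k.
\]
Next I would record the directional derivatives of the scalar outer function $s$. Away from the breakpoints $s$ is affine, so $s'(x;p)=p$ for $|x|>\gamma w_k$ and $s'(x;p)=0$ for $|x|<\gamma w_k$. At the two kinks a short one-sided computation gives $s'(\gamma w_k;p)=\max\{p,0\}$ and $s'(-\gamma w_k;p)=\min\{p,0\}$.

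The decisive step is the chain rule. Since the inner map $\mathbf v$ is Fréchet differentiable while the outer map $s$ is only directionally differentiable, the composition $s\circ v_k$ is again directionally differentiable and its derivative is obtained by inserting the inner derivative into the outer directional derivative, i.e.\ $(s\circ v_k)'(\mathbf u,\mathbf d)=s'(v_k(\mathbf u);p_k)$. This is exactly the point that has to be handled with care: the unqualified chain rule fails for merely directionally differentiable functions in general, and here it is rescued precisely by the differentiability of the inner map at the kink indices $\mathcal I^\pm$. Combining this with $F_k'(\mathbf u,\mathbf d)=d_k-(s\circ v_k)'(\mathbf u,\mathbf d)$ then gives, case by case, $d_k-p_k=\gamma(\nabla^2 g(\mathbf u)\mathbf d)_k$ on $\mathcal A$ and $d_k$ on $\mathcal I^\circ$, while on $\mathcal I^\pm$ the elementary identities $d_k-\max\{p_k,0\}=\min\{\gamma(\nabla^2 g(\mathbf u)\mathbf d)_k,d_k\}$ and $d_k-\min\{p_k,0\}=\max\{\gamma(\nabla^2 g(\mathbf u)\mathbf d)_k,d_k\}$ produce the remaining two formulas, which are precisely the four claimed expressions.
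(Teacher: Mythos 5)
Your proof is correct, but it takes a genuinely different route from the paper. The paper works from the min/max representation \eqref{eq:Fmin}: for $k\in\mathcal I^+(\mathbf u)$ it writes out the difference quotient of $\min\{\gamma(\nabla g(\mathbf u))_k+\gamma w_k,u_k\}$, uses the defining equality $u_k=\gamma(\nabla g(\mathbf u))_k+\gamma w_k$ to pull $F_k(\mathbf u)$ out of both arguments, and then passes the limit $t\searrow 0$ through the continuous $\min$ — a completely elementary, self-contained computation (the cases $\mathcal A$ and $\mathcal I^\circ$ are dismissed as trivial, and $\mathcal I^-$ by symmetry). You instead factor $F_k=u_k-s(v_k(\cdot))$ through the scalar soft-thresholding $s$, compute the one-sided derivatives of $s$ at its two kinks, and invoke a chain rule for the composition of a directionally differentiable outer map with a Fr\'echet differentiable inner map. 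Your case-by-case algebra checks out (in particular $d_k-\max\{p_k,0\}=\min\{d_k-p_k,d_k\}$ correctly recovers the claimed $\min$ and $\max$ formulas), and the approach is more systematic — it would work verbatim for any piecewise-smooth thresholding function. The one point to sharpen: you attribute the validity of the chain rule to the differentiability of the \emph{inner} map, but that alone is not what rescues it; the enabling fact is that the \emph{outer} map $s$ is (globally $1$-)Lipschitz, so that $s(v_k(\mathbf u)+tp_k+o(t))-s(v_k(\mathbf u)+tp_k)=o(t)$ and hence its directional derivative is of Hadamard type. Without that Lipschitz bound the composition of a merely directionally differentiable outer function with a smooth inner one can fail to be directionally differentiable. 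Stating this one-line estimate would close the only soft spot in your argument; the paper's direct computation sidesteps the issue entirely at the cost of being tied to the specific min/max form of $\mathbf F$.
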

\begin{proof}
The claim is trivially true for $k\in\mathcal A(\mathbf u)$ and $k\in\mathcal I^\circ(\mathbf u)$. For $k\in\mathcal I^+(\mathbf u)$ we have with \eqref{eq:Iplus} and \eqref{eq:Fmin}
\begin{align*}
&\lim\limits_{t\searrow 0}\frac{\min\{\gamma(\nabla g(\mathbf u+t\mathbf d))_k+\gamma w_k, u_k+td_k\}-\min\{\gamma(\nabla g(\mathbf u))_k+\gamma w_k, u_k\}}{t}\\
=&\lim\limits_{t\searrow 0}\frac{\min\{\gamma(\nabla g(\mathbf u+t\mathbf d))_k-\gamma(\nabla g(\mathbf u))_k,t d_k\}}{t}\\
=& \min\Big\{\lim\limits_{t\searrow 0}\frac{\gamma(\nabla g(\mathbf u+t\mathbf d))_k-\gamma(\nabla g(\mathbf u))_k}{t},d_k\Big\}\\
=&\min\{\gamma(\nabla^2 g(\mathbf u)\mathbf d)_k, d_k\}.
\end{align*}
The claim for $k\in\mathcal I^-(\mathbf u)$ results analogously.
\end{proof}

The directional derivative of the merit functional $\Theta$ from \eqref{eq:theta} at $\mathbf u\in\mathbb R^n$ in the direction $\mathbf d\in\mathbb R^n$ is given by $\Theta'(\mathbf u,\mathbf d)=2\langle \mathbf F'(\mathbf u,\mathbf d),\mathbf F(\mathbf u)\rangle$, where $\langle\cdot , \cdot\rangle$ denotes the Euclidean scalar product, see e.g.\ \cite[Lemma 3.2]{HaRa15}.

To introduce the modified semismooth Newton method, we define the subsets
\begin{align}
\mathcal A^+_+(\mathbf u)&:=\{k: \gamma(\nabla g(\mathbf u))_k+\gamma w_k<u_k<0\},\label{eq:A++}\\
\mathcal A^-_-(\mathbf u)&:=\{k: 0<u_k<\gamma(\nabla g(\mathbf u))_k-\gamma w_k\},\label{eq:A--}\\
\mathcal I^\circ_+(\mathbf u)&:=\{k: \gamma(\nabla g(\mathbf u))_k-\gamma w_k<u_k<\gamma(\nabla g(\mathbf u))_k+\gamma w_k<0\},\label{eq:I0+}\\
\mathcal I^\circ_-(\mathbf u)&:=\{k: 0<\gamma(\nabla g(\mathbf u))_k-\gamma w_k<u_k<\gamma(\nabla g(\mathbf u))_k+\gamma w_k\}.\label{eq:I0-}
\end{align}
Inspired by \cite{Pa91}, we define the modified index sets
\begin{align}
\overline{\mathcal A^+}(\mathbf u)&:=\mathcal A^+(\mathbf u)\setminus\mathcal A^+_+(\mathbf u),\label{eq:A+modified}\\
\overline{\mathcal A^-}(\mathbf u)&:=\mathcal A^-(\mathbf u)\setminus\mathcal A^-_-(\mathbf u),\label{eq:A-modified}\\
\overline{\mathcal I^\circ}(\mathbf u)&:=\mathcal I^\circ(\mathbf u)\setminus\big(\mathcal I^\circ_+(\mathbf u)\cup\mathcal I^\circ_-(\mathbf u)\big),\label{eq:I0modified}\\
\overline{\mathcal I^+}(\mathbf u)&:=\mathcal I^+(\mathbf u)\cup\mathcal A^+_+(\mathbf u)\cup\mathcal I^\circ_+(\mathbf u),\label{eq:I+modified}\\
\overline{\mathcal I^-}(\mathbf u)&:=\mathcal I^-(\mathbf u)\cup\mathcal A^-_-(\mathbf u)\cup\mathcal I^\circ_-(\mathbf u).\label{eq:I-modified}
\end{align}
We denote $\overline{\mathcal A}(\mathbf u):=\overline{\mathcal A^+}(\mathbf u)\cup\overline{\mathcal A^-}(\mathbf u)$ and $\overline{\mathcal I}(\mathbf u):=\overline{\mathcal I^\circ}(\mathbf u)\cup\overline{\mathcal I^+}(\mathbf u)\cup\overline{\mathcal I^-}(\mathbf u)$ respectively. The subsets \eqref{eq:A++}--\eqref{eq:I0-} fulfill $\mathcal A^+_+(\mathbf u)=\emptyset$, $\mathcal A^-_-(\mathbf u)=\emptyset$, $\mathcal I^\circ_+(\mathbf u)=\emptyset$ and $\mathcal I^\circ_-(\mathbf u)=\emptyset$ if $\mathbf F(\mathbf u)=\mathbf 0$.

In the following lemma, we consider a linear complementarity problem which is important for all further discussions, cf.\ \cite{HaRa15}.

\begin{lemma}
Let $\mathbf u\in \mathbb R^n$ and $\mathbf M:=\nabla^2 g(\mathbf u)$. The linear complementarity problem 
\begin{align}\label{eq:LCP}
\mathbf x\ge\mathbf 0,\quad \mathbf N\mathbf x+\mathbf z\ge\mathbf 0,\quad\langle\mathbf x,\mathbf N\mathbf x+\mathbf z\rangle=0,
\end{align}
with 
\begin{align}\label{eq:BSSNN}
\mathbf N=&\mathbf N(\mathbf u)\nonumber\\
:=&\gamma
\begin{pmatrix}
\mathbf M_{\overline{\mathcal I^+},\overline{\mathcal I^+}}-\mathbf M_{\overline{\mathcal I^+},\overline{\mathcal A}}\mathbf M_{\overline{\mathcal A},\overline{\mathcal A}}^{-1}\mathbf M_{\overline{\mathcal A},\overline{\mathcal I^+}}&
\mathbf M_{\overline{\mathcal I^+},\overline{\mathcal A}}\mathbf M_{\overline{\mathcal A},\overline{\mathcal A}}^{-1}\mathbf M_{\overline{\mathcal A},\overline{\mathcal I^-}}-
\mathbf M_{\overline{\mathcal I^+},\overline{\mathcal I^-}}\\
\mathbf M_{\overline{\mathcal I^-},\overline{\mathcal A}}\mathbf M_{\overline{\mathcal A},\overline{\mathcal A}}^{-1}\mathbf M_{\overline{\mathcal A},\overline{\mathcal I^+}}-\mathbf M_{\overline{\mathcal I^-},\overline{\mathcal I^+}}&
\mathbf M_{\overline{\mathcal I^-},\overline{\mathcal I^-}}-\mathbf M_{\overline{\mathcal I^-},\overline{\mathcal A}}\mathbf M_{\overline{\mathcal A},\overline{\mathcal A}}^{-1}\mathbf M_{\overline{\mathcal A},\overline{\mathcal I^-}}
\end{pmatrix}
\end{align}
and 
\begin{align}\label{eq:BSSNz}
\begin{split}
&\mathbf z=\mathbf z(\mathbf u)\\
&:=
\begin{pmatrix}
\gamma(\mathbf M_{\overline{\mathcal I^+},\overline{\mathcal A}}\mathbf M_{\overline{\mathcal A},\overline{\mathcal A}}^{-1}\mathbf M_{\overline{\mathcal A},\overline{\mathcal I^\circ}}-\mathbf M_{\overline{\mathcal I^+},\overline{\mathcal I^\circ}})\mathbf u_{\overline{\mathcal I^\circ}}
-\mathbf M_{\overline{\mathcal I^+},\overline{\mathcal A}}\mathbf M_{\overline{\mathcal A},\overline{\mathcal A}}^{-1}\mathbf F(\mathbf u)_{\overline{\mathcal A}}
+\mathbf F(\mathbf u)_{\overline{\mathcal I^+}}\\
\gamma(\mathbf M_{\overline{\mathcal I^-},\overline{\mathcal I^\circ}}-\mathbf M_{\overline{\mathcal I^-},\overline{\mathcal A}}\mathbf M_{\overline{\mathcal A},\overline{\mathcal A}}^{-1}\mathbf M_{\overline{\mathcal A},\overline{\mathcal I^\circ}})\mathbf u_{\overline{\mathcal I^\circ}}
+\mathbf M_{\overline{\mathcal I^-},\overline{\mathcal A}}\mathbf M_{\overline{\mathcal A},\overline{\mathcal A}}^{-1}\mathbf F(\mathbf u)_{\overline{\mathcal A}}
-\mathbf F(\mathbf u)_{\overline{\mathcal I^-}}
\end{pmatrix}\\
&\phantom{:=}\
-\mathbf N(\mathbf u)
\begin{pmatrix}
\mathbf u_{\overline{\mathcal I^+}}\\
-\mathbf u_{\overline{\mathcal I^-}}
\end{pmatrix}
\end{split}
\end{align}
has a unique solution.
\end{lemma}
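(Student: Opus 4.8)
The plan is to establish existence and uniqueness of the solution to the linear complementarity problem \eqref{eq:LCP} by exploiting the classical theory of LCPs with a \emph{positive definite} (hence $P$-) matrix $\mathbf N$. The standard result (see \cite{CoPaSt09}) states that for any vector $\mathbf z$, the LCP \eqref{eq:LCP} has a unique solution whenever $\mathbf N$ is positive definite. Therefore the entire proof reduces to showing that the matrix $\mathbf N=\mathbf N(\mathbf u)$ defined in \eqref{eq:BSSNN} is positive definite. The vector $\mathbf z$ plays no role in the existence-and-uniqueness argument, only $\mathbf N$ does.

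First I would observe that $\mathbf N$ is precisely the \emph{Schur complement} of the block $\mathbf M_{\overline{\mathcal A},\overline{\mathcal A}}$ in a suitably signed rearrangement of $\mathbf M=\nabla^2 g(\mathbf u)$. More explicitly, consider the symmetric positive definite matrix $\mathbf M$ (positive definiteness is guaranteed by Assumption~\ref{ass:A2}) restricted to the index set $\overline{\mathcal A}\cup\overline{\mathcal I^+}\cup\overline{\mathcal I^-}$, and conjugate the $\overline{\mathcal I^-}$-block by a sign flip (the $\begin{pmatrix}\mathbf u_{\overline{\mathcal I^+}}\\ -\mathbf u_{\overline{\mathcal I^-}}\end{pmatrix}$ structure in \eqref{eq:BSSNz} and the signs in \eqref{eq:BSSNN} indicate exactly this change of variables $x_k\mapsto -x_k$ for $k\in\overline{\mathcal I^-}$). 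Since a congruence transformation by a signature (diagonal $\pm1$) matrix preserves positive definiteness, the reordered and signed matrix $\widetilde{\mathbf M}$ is again symmetric positive definite, and its $\overline{\mathcal A}$-block $\mathbf M_{\overline{\mathcal A},\overline{\mathcal A}}$ is invertible. I would then identify $\mathbf N/\gamma$ as the Schur complement of $\mathbf M_{\overline{\mathcal A},\overline{\mathcal A}}$ in $\widetilde{\mathbf M}$ by comparing the four blocks of \eqref{eq:BSSNN} entry-by-entry with the Schur complement formula; the signs in the off-diagonal blocks are exactly those produced by the sign change on $\overline{\mathcal I^-}$.

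The key algebraic fact I would invoke is that the Schur complement of a principal block of a symmetric positive definite matrix is itself symmetric positive definite. Thus $\mathbf N/\gamma$ is positive definite, and since $\gamma>0$, so is $\mathbf N$. A positive definite matrix is in particular a $P$-matrix, so the LCP \eqref{eq:LCP} is uniquely solvable for every $\mathbf z$, completing the proof. I would also note for completeness that $\mathbf M_{\overline{\mathcal A},\overline{\mathcal A}}^{-1}$ appearing in \eqref{eq:BSSNN} and \eqref{eq:BSSNz} is well-defined precisely because $\mathbf M_{\overline{\mathcal A},\overline{\mathcal A}}$ is a principal submatrix of the positive definite $\mathbf M$ and hence itself positive definite (and invertible), which retroactively justifies that the expressions in the statement are meaningful.

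The main obstacle I anticipate is the bookkeeping of signs and block identifications: verifying that the four-block expression \eqref{eq:BSSNN}, with its specific pattern of plus and minus signs in the off-diagonal entries, coincides block-for-block with the Schur complement of $\mathbf M_{\overline{\mathcal A},\overline{\mathcal A}}$ after the signature congruence that flips the $\overline{\mathcal I^-}$-coordinates. The conceptual content (positive definiteness $\Rightarrow$ $P$-matrix $\Rightarrow$ unique LCP solution, plus Schur-complement positivity) is standard; the genuine care is required in matching signs in the matrix blocks and confirming that the same change of variables is consistently reflected in the definition of $\mathbf z$. Once that correspondence is pinned down, the positive definiteness of $\mathbf N$ and hence the unique solvability follow immediately from the cited LCP theory.
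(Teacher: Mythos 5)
Your proposal is correct and follows essentially the same route as the paper: the paper likewise reduces everything to the positive definiteness of $\mathbf N$ (citing its earlier work for that fact, which is precisely the Schur-complement-plus-signature-congruence argument you spell out) and then invokes the standard result that an LCP with positive definite matrix is uniquely solvable for every $\mathbf z$. Your block-by-block sign verification checks out, so nothing is missing.
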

\begin{proof}
By Assumption \ref{ass:1}, $\mathbf M=\nabla^2 g(\mathbf u)$ is symmetric and positive definite. Therefore, $\mathbf N$ from \eqref{eq:BSSNN} is symmetric and positive definite, see \cite[Lemma 3.3]{HaRa15}. Hence \eqref{eq:LCP} is uniquely solvable, see \cite[Theorem 3.3.7]{CoPaSt09} and \cite[Theorem 3.5]{HaRa15}.
\end{proof}

Now we can define the generalized Newton equation for $\mathbf F$, cf.\ \cite{HaRa15}. Let $\mathbf u\in\mathbb R^n$ and
\begin{align}\label{eq:BC}
\begin{split}
\mathcal B&=\mathcal B(\mathbf u):=\overline{\mathcal A}(\mathbf u)\cup\{k\in\overline{\mathcal I^+}(\mathbf u)\cup\overline{\mathcal I^-}(\mathbf u): x_k>0\},\\
\mathcal C&=\mathcal C(\mathbf u):=\overline{\mathcal I^\circ}(\mathbf u)\cup\{k\in\overline{\mathcal I^+}(\mathbf u)\cup\overline{\mathcal I^-}(\mathbf u): x_k=0\},
\end{split}
\end{align}
where $\mathbf x=(x_k)_k$ is the unique solution to the linear complementarity problem
\eqref{eq:LCP}. Then, by defining the generalized derivative blockwise
\begin{equation}\label{eq:G}
\begin{pmatrix}\mathbf G(\mathbf u)_{\mathcal B,\mathcal B} & \mathbf G(\mathbf u)_{\mathcal B,\mathcal C}\\ \mathbf G(\mathbf u)_{\mathcal C,\mathcal B} & \mathbf G(\mathbf u)_{\mathcal C,\mathcal C}\end{pmatrix}:=\begin{pmatrix}
\gamma (\nabla^2 g(\mathbf u))_{\mathcal B,\mathcal B} & \gamma (\nabla^2 g(\mathbf u))_{\mathcal B,\mathcal C}\\
\mathbf 0_{\mathcal C,\mathcal B} & \mathbf I_{\mathcal C,\mathcal C} 
\end{pmatrix}\in\mathbb R^{n\times n},
\end{equation}
the modified semismooth Newton method is given by
\begin{align}
\mathbf G(\mathbf u^{(j)})\mathbf d^{(j)}&=-\mathbf F(\mathbf u^{(j)}),\label{eq:generalizedNewtoneq} \\
\mathbf u^{(j+1)}&=\mathbf u^{(j)}+t_j\mathbf d^{(j)},\quad j=0,1,\ldots \label{eq:Newtonupdate}
\end{align}
 with suitably chosen damping parameters $t_j\in (0,1]$.
 
\begin{remark}
 In \cite{MuHaMaPi13}, Muoi et al.\ chose the slanting function
\begin{equation}\label{eq:GMuoi}
\begin{pmatrix}\mathbf G(\mathbf u)_{\mathcal A,\mathcal A} & \mathbf G(\mathbf u)_{\mathcal A,\mathcal I}\\ \mathbf G(\mathbf u)_{\mathcal I,\mathcal A} & \mathbf G(\mathbf u)_{\mathcal I,\mathcal I}\end{pmatrix}=\begin{pmatrix}
\gamma(\nabla^2 g(\mathbf u))_{\mathcal A,\mathcal A} & \gamma(\nabla^2 g(\mathbf u))_{\mathcal A,\mathcal I}\\
\mathbf 0_{\mathcal I,\mathcal A} & \mathbf I_{\mathcal I,\mathcal I}
\end{pmatrix},
\end{equation}
blocked according to the active and inactive sets, to define the local semismooth Newton method \eqref{eq:SSNmethod},\eqref{eq:SSNupdate}. The key difference of \eqref{eq:G} compared to \eqref{eq:GMuoi} is the modification of the index sets. Note that $\mathbf G$ from \eqref{eq:G} is not a slanting function in general because in regions where $\mathbf F$ is smooth, $\mathbf G$ does not coincide with the Fr\'{e}chet-derivative of $\mathbf F$.
\end{remark} 
  
Let $\mathbf u^{(j)}\in\mathbb R^n$ and $\mathbf M:=\nabla^2 g(\mathbf u^{(j)})$. Then $\mathbf d^{(j)}\in\mathbb R^n$ solves \eqref{eq:generalizedNewtoneq} if and only if
\begin{align}
\gamma(\mathbf M\mathbf d^{(j)})_{\overline{\mathcal A}}&=-\mathbf F(\mathbf u^{(j)})_{\overline{\mathcal A}},\label{eq:dA}\\
\mathbf d_{\overline{\mathcal I^\circ}}^{(j)}&=-\mathbf u_{\overline{\mathcal I^\circ}}^{(j)},\label{eq:dI}
\end{align}
and
\begin{equation}\label{eq:dx}
\begin{aligned}
\mathbf x&:=\begin{pmatrix}
\mathbf d_{\overline{\mathcal I^+}}^{(j)}+\mathbf u_{\overline{\mathcal I^+}}^{(j)}\\
-\mathbf d_{\overline{\mathcal I^-}}^{(j)}-\mathbf u_{\overline{\mathcal I^-}}^{(j)}
\end{pmatrix}\\
\mathbf y&:=\mathbf N(\mathbf u^{(j)})\mathbf x+\mathbf z(\mathbf u^{(j)})=\begin{pmatrix}
\gamma(\mathbf M\mathbf d^{(j)})_{\overline{\mathcal I^+}}+(\mathbf F(\mathbf u^{(j)}))_{\overline{\mathcal I^+}}\\
-\gamma(\mathbf M\mathbf d^{(j)})_{\overline{\mathcal I^-}}-(\mathbf F(\mathbf u^{(j)}))_{\overline{\mathcal I^-}}
\end{pmatrix},
\end{aligned}
\end{equation}
where $\mathbf x$, $\mathbf y$ solve the linear complementarity problem \eqref{eq:LCP}, cf.\ \cite[Lemma 3.4]{HaRa15}.

We summarize the above observations in the following lemma, cf.\ \cite[Theorem 3.5]{HaRa15}.

\begin{lemma}\label{lemma:dunique}
Let $\mathbf x$ be the unique solution to \eqref{eq:LCP} for an iterate $\mathbf u^{(j)}\in\mathbb R^n$ and $\mathbf M:=\nabla^2 g(\mathbf u^{(j)})$. Then, the Newton update $\mathbf d^{(j)}$ from \eqref{eq:generalizedNewtoneq} is given by
\begin{equation}
\begin{aligned}\label{eq:d}
\mathbf d^{(j)}_{\overline{\mathcal I^\circ}}&=-\mathbf u^{(j)}_{\overline{\mathcal I^\circ}},\\
\mathbf d^{(j)}_{\overline{\mathcal I^+}}&=\mathbf x_{\overline{\mathcal I^+}}-\mathbf u^{(j)}_{\overline{\mathcal I^+}},\\
\mathbf d^{(j)}_{\overline{\mathcal I^-}}&=-\mathbf x_{\overline{\mathcal I^-}}-\mathbf u^{(j)}_{\overline{\mathcal I^-}},\\
\mathbf d^{(j)}_{\overline{\mathcal A}}&=\frac{1}{\gamma}\mathbf M_{\overline{\mathcal A},\overline{\mathcal A}}^{-1}\big(-\gamma\mathbf M_{\overline{\mathcal A},\overline{\mathcal I}}\mathbf d^{(j)}_{\overline{\mathcal I}}-\mathbf F(\mathbf u^{(j)})_{\overline{\mathcal A}}\big).
\end{aligned}
\end{equation}
\end{lemma}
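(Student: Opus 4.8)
The plan is to exploit the equivalence recorded immediately before the lemma: the generalized Newton equation \eqref{eq:generalizedNewtoneq} holds if and only if the three (blockwise) conditions \eqref{eq:dA}, \eqref{eq:dI} and \eqref{eq:dx} are satisfied, where the auxiliary vectors $\mathbf x$, $\mathbf y$ solve the linear complementarity problem \eqref{eq:LCP}. Since the preceding lemma guarantees that \eqref{eq:LCP} possesses a \emph{unique} solution $\mathbf x$, the strategy is simply to solve this triple of conditions for the four blocks $\mathbf d^{(j)}_{\overline{\mathcal I^\circ}}$, $\mathbf d^{(j)}_{\overline{\mathcal I^+}}$, $\mathbf d^{(j)}_{\overline{\mathcal I^-}}$ and $\mathbf d^{(j)}_{\overline{\mathcal A}}$ one block at a time, and to read off \eqref{eq:d}. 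No genuinely new machinery is required; the work is bookkeeping across the five index sets.

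Concretely, first I would take the inactive-interior block directly from \eqref{eq:dI}, which already yields $\mathbf d^{(j)}_{\overline{\mathcal I^\circ}}=-\mathbf u^{(j)}_{\overline{\mathcal I^\circ}}$, the first line of \eqref{eq:d}. Next I would invert the definition of $\mathbf x$ in \eqref{eq:dx}: its upper block reads $\mathbf x_{\overline{\mathcal I^+}}=\mathbf d^{(j)}_{\overline{\mathcal I^+}}+\mathbf u^{(j)}_{\overline{\mathcal I^+}}$ and its lower block $\mathbf x_{\overline{\mathcal I^-}}=-\mathbf d^{(j)}_{\overline{\mathcal I^-}}-\mathbf u^{(j)}_{\overline{\mathcal I^-}}$, and solving each for the Newton direction produces the second and third lines of \eqref{eq:d}. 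Finally, for the active block I would expand \eqref{eq:dA} by splitting the matrix--vector product along $\overline{\mathcal A}$ and its complement $\overline{\mathcal I}=\overline{\mathcal I^\circ}\cup\overline{\mathcal I^+}\cup\overline{\mathcal I^-}$, i.e.\ $\gamma\mathbf M_{\overline{\mathcal A},\overline{\mathcal A}}\mathbf d^{(j)}_{\overline{\mathcal A}}+\gamma\mathbf M_{\overline{\mathcal A},\overline{\mathcal I}}\mathbf d^{(j)}_{\overline{\mathcal I}}=-\mathbf F(\mathbf u^{(j)})_{\overline{\mathcal A}}$, and solve for $\mathbf d^{(j)}_{\overline{\mathcal A}}$, which gives the last line of \eqref{eq:d} once the already-determined inactive blocks are inserted.

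The only point requiring justification --- and the nearest thing to an obstacle --- is the existence of $\mathbf M_{\overline{\mathcal A},\overline{\mathcal A}}^{-1}$ invoked in the active block. This follows from Assumption \ref{ass:1}: $\mathbf M=\nabla^2 g(\mathbf u^{(j)})$ is symmetric and positive definite, so every principal submatrix, in particular $\mathbf M_{\overline{\mathcal A},\overline{\mathcal A}}$, is symmetric positive definite and hence invertible. The uniqueness of $\mathbf d^{(j)}$ then follows because \eqref{eq:dI} and \eqref{eq:dx} fix the inactive blocks from the unique LCP solution $\mathbf x$, after which \eqref{eq:dA} fixes the active block through the invertible $\mathbf M_{\overline{\mathcal A},\overline{\mathcal A}}$; equivalently, the block-triangular structure \eqref{eq:G} together with $\det\mathbf G(\mathbf u^{(j)})=\gamma^{|\mathcal B|}\det\mathbf M_{\mathcal B,\mathcal B}>0$ shows directly that $\mathbf G(\mathbf u^{(j)})$ is nonsingular. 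Substituting the four expressions back into \eqref{eq:generalizedNewtoneq} confirms that \eqref{eq:d} indeed solves the Newton equation, which completes the argument.
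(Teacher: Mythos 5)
Your proposal is correct and follows essentially the same route as the paper, which states this lemma as a direct summary of the blockwise equivalence \eqref{eq:dA}, \eqref{eq:dI}, \eqref{eq:dx} established immediately beforehand (citing the analogous result in the quadratic case). Your additional remarks on the invertibility of $\mathbf M_{\overline{\mathcal A},\overline{\mathcal A}}$ and the nonsingularity of $\mathbf G(\mathbf u^{(j)})$ are consistent with Assumption \ref{ass:1} and merely make explicit what the paper leaves implicit.
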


Before proceeding, we prove some useful identities similar to \cite[Lemma 2]{Pa91}.

\begin{lemma}
Let $\mathbf u\in\mathbb R^n$, $\mathbf d=\mathbf d(\mathbf u)$ the unique solution to \eqref{eq:d} and $\mathbf M=\nabla^2 g(\mathbf u)$. For $k\in\{1,\ldots,n\}$, we have the following identities
\begin{align}
(\gamma(\nabla g(\mathbf u))_k+\gamma w_k)\gamma (\mathbf M\mathbf d)_k
&=-(\gamma (\nabla g(\mathbf u))_k+\gamma w_k)^2, & k\in\overline{\mathcal A^+}(\mathbf u),\label{eq:help1}\\
(\gamma(\nabla g(\mathbf u))_k-\gamma w_k)\gamma (\mathbf M\mathbf d)_k
&=-(\gamma (\nabla g(\mathbf u))_k-\gamma w_k)^2, & k\in\overline{\mathcal A^-}(\mathbf u),\label{eq:help2}\\
u_k d_k &=-u_k^2, &k\in\overline{\mathcal I^\circ}(\mathbf u).\label{eq:help3}
\end{align}
Additionally, for $k\in \mathcal A^+_+(\mathbf u)\cup\mathcal I^\circ_+(\mathbf u)\cup\{k\in\mathcal I^+(\mathbf u): F_k(\mathbf u)<0)\}$ the inequality
\begin{align}
(\gamma(\nabla g(\mathbf u))_k+\gamma w_k)\gamma (\mathbf M\mathbf d)_k
&\le-(\gamma (\nabla g(\mathbf u))_k+\gamma w_k)^2 \label{eq:help4}
\end{align}
holds, for $k\in \mathcal A^-_-(\mathbf u)\cup\mathcal I^\circ_-(\mathbf u)\cup\{k\in\mathcal I^-(\mathbf u): F_k(\mathbf u)>0)\}$ we have 
\begin{align}
(\gamma(\nabla g(\mathbf u))_k-\gamma w_k)\gamma (\mathbf M\mathbf d)_k
&\le-(\gamma (\nabla g(\mathbf u))_k-\gamma w_k)^2\label{eq:help5}
\end{align}
and for $k\in \mathcal A^+_+(\mathbf u)\cup\mathcal A^-_-(\mathbf u)\cup\mathcal I^\circ_+(\mathbf u)\cup\mathcal I^\circ_-(\mathbf u)\cup \{k\in\mathcal I^+(\mathbf u): F_k(\mathbf u)<0)\}\cup \{k\in\mathcal I^-(\mathbf u): F_k(\mathbf u)>0)\}$ we have
\begin{align}
u_k d_k &\le-u_k^2.\label{eq:help6}
\end{align}
\end{lemma}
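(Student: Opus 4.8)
The plan is to obtain the three identities \eqref{eq:help1}--\eqref{eq:help3} by direct substitution of the explicit formulas for $\mathbf d$ from Lemma \ref{lemma:dunique}, and to derive the three families of inequalities \eqref{eq:help4}--\eqref{eq:help6} from the complementarity structure of the linear complementarity problem \eqref{eq:LCP} encoded in \eqref{eq:dx}, combined with an elementary sign analysis of the quantities $a_k:=\gamma(\nabla g(\mathbf u))_k+\gamma w_k$, $b_k:=\gamma(\nabla g(\mathbf u))_k-\gamma w_k$, $u_k$ and $F_k(\mathbf u)$ on each of the subsets \eqref{eq:A++}--\eqref{eq:I0-}.

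For the identities I would first record that $\mathbf F$ has the explicit form $F_k(\mathbf u)=a_k$ for $k\in\mathcal A^+\supseteq\overline{\mathcal A^+}$ and $F_k(\mathbf u)=b_k$ for $k\in\mathcal A^-\supseteq\overline{\mathcal A^-}$, directly from \eqref{eq:A+}, \eqref{eq:A-} and \eqref{eq:Fmin}. Since $\overline{\mathcal A^+},\overline{\mathcal A^-}\subseteq\overline{\mathcal A}$, equation \eqref{eq:dA} yields $\gamma(\mathbf M\mathbf d)_k=-F_k(\mathbf u)$, whence $a_k\,\gamma(\mathbf M\mathbf d)_k=-a_k^2$ and $b_k\,\gamma(\mathbf M\mathbf d)_k=-b_k^2$, i.e.\ \eqref{eq:help1}, \eqref{eq:help2}. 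Identity \eqref{eq:help3} follows by inserting $d_k=-u_k$ from \eqref{eq:dI}, which holds on $\overline{\mathcal I^\circ}$.

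For the inequalities the key observation is that, by \eqref{eq:I+modified} and \eqref{eq:I-modified}, the index set of \eqref{eq:help4} is contained in $\overline{\mathcal I^+}$, that of \eqref{eq:help5} in $\overline{\mathcal I^-}$, and that of \eqref{eq:help6} splits between $\overline{\mathcal I^+}$ and $\overline{\mathcal I^-}$. From \eqref{eq:dx} together with feasibility $\mathbf x\ge\mathbf 0$, $\mathbf y\ge\mathbf 0$ of the LCP solution I read off, for $k\in\overline{\mathcal I^+}$, the bounds $x_k=d_k+u_k\ge 0$ and $y_k=\gamma(\mathbf M\mathbf d)_k+F_k(\mathbf u)\ge 0$, and the sign-reversed analogues $-d_k-u_k\ge 0$, $-\gamma(\mathbf M\mathbf d)_k-F_k(\mathbf u)\ge 0$ for $k\in\overline{\mathcal I^-}$. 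Rewriting \eqref{eq:help4} as $a_k(\gamma(\mathbf M\mathbf d)_k+a_k)\le 0$ and \eqref{eq:help6} as $u_k(d_k+u_k)\le 0$, each case reduces to determining the sign of the leading factor and that of the second factor.

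The bookkeeping splits according to whether the leading factor coincides with $F_k(\mathbf u)$. On $\mathcal A^+_+$ and on $\{k\in\mathcal I^+:F_k(\mathbf u)<0\}$ one has $a_k=F_k(\mathbf u)<0$, so $\gamma(\mathbf M\mathbf d)_k+a_k=y_k\ge 0$ gives \eqref{eq:help4} at once; \eqref{eq:help5} is symmetric on $\mathcal A^-_-$ and $\{k\in\mathcal I^-:F_k(\mathbf u)>0\}$ via $b_k=F_k(\mathbf u)>0$. I expect the only genuinely different (and hence main) step to be the \emph{mixed} sets $\mathcal I^\circ_+$ and $\mathcal I^\circ_-$, where $a_k$ (resp.\ $b_k$) differs from $F_k(\mathbf u)=u_k$; there I would combine the LCP bound $\gamma(\mathbf M\mathbf d)_k\ge -u_k$ with the defining inequality $u_k<a_k$ from \eqref{eq:I0+} to obtain $\gamma(\mathbf M\mathbf d)_k+a_k\ge a_k-u_k>0$, which with $a_k<0$ proves \eqref{eq:help4}, the set $\mathcal I^\circ_-$ being treated by the mirror argument. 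Finally \eqref{eq:help6} is immediate: on the $\overline{\mathcal I^+}$-subsets one has $u_k<0$ (from \eqref{eq:A++}, \eqref{eq:I0+}, or $u_k=a_k<0$) paired with $d_k+u_k=x_k\ge 0$, and on the $\overline{\mathcal I^-}$-subsets $u_k>0$ paired with $d_k+u_k=-x_k\le 0$, so $u_k(d_k+u_k)\le 0$ in every case.
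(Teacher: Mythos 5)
Your proposal is correct and follows essentially the same route as the paper: the identities come from substituting \eqref{eq:dA} and \eqref{eq:dI}, and the inequalities come from the feasibility conditions $\mathbf x\ge\mathbf 0$, $\mathbf N\mathbf x+\mathbf z\ge\mathbf 0$ of \eqref{eq:LCP} read through \eqref{eq:dx}, combined with the sign information $\gamma(\nabla g(\mathbf u))_k\pm\gamma w_k\lessgtr 0$ and $u_k\lessgtr 0$ on the subsets \eqref{eq:A++}--\eqref{eq:I0-}. Your slightly more explicit case split (distinguishing whether the leading factor equals $F_k(\mathbf u)$) is just an unpacking of the paper's one-line chain $\gamma(\mathbf M\mathbf d)_k\ge -F_k(\mathbf u)\ge -(\gamma(\nabla g(\mathbf u))_k+\gamma w_k)$.
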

\begin{proof}
Equations \eqref{eq:help1}, \eqref{eq:help2} and \eqref{eq:help3} immediately follow from \eqref{eq:dA} and \eqref{eq:dI}. For $k\in \mathcal A^+_+(\mathbf u)\cup\mathcal I^\circ_+(\mathbf u)\cup\{k\in\mathcal I^+(\mathbf u): F_k(\mathbf u)<0)\}$, we have by definition $\gamma(\nabla g(\mathbf u))_k+\gamma w_k<0$ and with \eqref{eq:LCP} and \eqref{eq:dx} we have $\gamma(\mathbf M\mathbf d)_k\ge -F_k(\mathbf u)\ge -(\gamma(\nabla g(\mathbf u))_k+\gamma w_k)$ implying \eqref{eq:help4}. For $k\in \mathcal A^-_-(\mathbf u)\cup\mathcal I^\circ_-(\mathbf u)\cup\{k\in\mathcal I^-(\mathbf u): F_k(\mathbf u)>0)\}$, we have by definition $\gamma(\nabla g(\mathbf u))_k-\gamma w_k>0$ and with \eqref{eq:LCP} and \eqref{eq:dx} we have $\gamma (\mathbf M\mathbf d)_k\le -F_k(\mathbf u)\le -(\gamma(\nabla g(\mathbf u))_k-\gamma w_k)$, implying \eqref{eq:help5}.

For $k\in\mathcal A^+_+(\mathbf u)\cup\mathcal I^\circ_+(\mathbf u)\cup\{k\in\mathcal I^+(\mathbf u): F_k(\mathbf u)<0\}$, we have $u_k<0$ and $d_k\ge -u_k$ because of \eqref{eq:LCP} and \eqref{eq:dx}. If $k\in\mathcal A^-_-(\mathbf u)\cup\mathcal I^\circ_-(\mathbf u)\cup\{k\in\mathcal I^-(\mathbf u): F_k(\mathbf u)>0\}$, we have $u_k>0$ and $d_k\le -u_k$ because of \eqref{eq:LCP} and \eqref{eq:dx}. In both cases \eqref{eq:help6} follows. 
\end{proof}

Now we verify that $\mathbf d=\mathbf d(\mathbf u)$ from \eqref{eq:d} is a descent direction of the merit functional $\Theta$ from \eqref{eq:theta} at $\mathbf u$.

\begin{lemma}\label{lemma:descent}
Let $\mathbf u\in\mathbb R^n$ with $\Theta(\mathbf u)>0$ and $\mathbf M:=\nabla^2 g(\mathbf u)$. Let $\mathbf d=\mathbf d(\mathbf u)\in\mathbb R^n$ be the solution to \eqref{eq:d}. Then, we have 
\begin{align}\label{eq:descent}
\Theta'(\mathbf u,\mathbf d)\le -2\Theta(\mathbf u)<0,
\end{align}
i.e.\ $\mathbf d$ is a true descent direction of $\Theta$ at $\mathbf u$ in the  direction $\mathbf d$.
\end{lemma}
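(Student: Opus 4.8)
The plan is to reduce the claimed estimate to a componentwise inequality for the summands of the inner product defining $\Theta'(\mathbf u,\mathbf d)$. Recall from the displayed identity following Lemma \ref{lemma:dirderivative} that $\Theta'(\mathbf u,\mathbf d)=2\langle\mathbf F'(\mathbf u,\mathbf d),\mathbf F(\mathbf u)\rangle=2\sum_{k=1}^n F_k'(\mathbf u,\mathbf d)\,F_k(\mathbf u)$, so it suffices to prove the componentwise bound
\[
F_k'(\mathbf u,\mathbf d)\,F_k(\mathbf u)\le -F_k(\mathbf u)^2\qquad\text{for all }k\in\{1,\dots,n\}.
\]
Summing over $k$ then yields $\langle\mathbf F'(\mathbf u,\mathbf d),\mathbf F(\mathbf u)\rangle\le -\|\mathbf F(\mathbf u)\|_2^2=-\Theta(\mathbf u)$, and multiplication by two gives \eqref{eq:descent}, the strict negativity following from the hypothesis $\Theta(\mathbf u)>0$.

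To establish the componentwise bound I would use that $\{1,\dots,n\}$ is partitioned by the modified sets $\overline{\mathcal A^+},\overline{\mathcal A^-},\overline{\mathcal I^\circ},\overline{\mathcal I^+},\overline{\mathcal I^-}$, and check each piece. On $\overline{\mathcal A^+}\subseteq\mathcal A$, $\overline{\mathcal A^-}\subseteq\mathcal A$ and $\overline{\mathcal I^\circ}\subseteq\mathcal I^\circ$ the directional derivative is given by the first two branches of Lemma \ref{lemma:dirderivative}, namely $F_k'(\mathbf u,\mathbf d)=\gamma(\mathbf M\mathbf d)_k$ respectively $F_k'(\mathbf u,\mathbf d)=d_k$, while by \eqref{eq:A+}--\eqref{eq:I-} and \eqref{eq:Fmin} we have $F_k(\mathbf u)=\gamma(\nabla g(\mathbf u))_k+\gamma w_k$, $F_k(\mathbf u)=\gamma(\nabla g(\mathbf u))_k-\gamma w_k$ and $F_k(\mathbf u)=u_k$ on the three sets. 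Then the identities \eqref{eq:help1}, \eqref{eq:help2} and \eqref{eq:help3} turn the required inequality into the exact equality $F_k'(\mathbf u,\mathbf d)\,F_k(\mathbf u)=-F_k(\mathbf u)^2$.

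The delicate contributions are those from $\overline{\mathcal I^+}=\mathcal I^+\cup\mathcal A^+_+\cup\mathcal I^\circ_+$ and, symmetrically, $\overline{\mathcal I^-}$. For the anomalous indices $k\in\mathcal A^+_+$ one has $F_k(\mathbf u)=\gamma(\nabla g(\mathbf u))_k+\gamma w_k<0$, and since $k$ still lies in $\mathcal A$ the directional derivative is $F_k'(\mathbf u,\mathbf d)=\gamma(\mathbf M\mathbf d)_k$, so \eqref{eq:help4} directly gives $F_k'(\mathbf u,\mathbf d)\,F_k(\mathbf u)\le -F_k(\mathbf u)^2$; for $k\in\mathcal I^\circ_+$ one has $F_k(\mathbf u)=u_k$ and $F_k'(\mathbf u,\mathbf d)=d_k$, and \eqref{eq:help6} yields the same bound. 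For the genuine indices $k\in\mathcal I^+$, where $F_k(\mathbf u)=u_k$ and $F_k'(\mathbf u,\mathbf d)=\min\{\gamma(\mathbf M\mathbf d)_k,d_k\}$, I would instead exploit the complementarity structure of \eqref{eq:LCP}: by \eqref{eq:dx} the scalars $x_k=d_k+u_k\ge 0$ and $y_k=\gamma(\mathbf M\mathbf d)_k+F_k(\mathbf u)\ge 0$ satisfy $x_ky_k=0$, so $d_k\ge -u_k$ and $\gamma(\mathbf M\mathbf d)_k\ge -u_k$ with at least one equality, forcing $F_k'(\mathbf u,\mathbf d)=-u_k=-F_k(\mathbf u)$ and hence equality in the componentwise bound. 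The set $\overline{\mathcal I^-}$ is treated by the mirror argument, using \eqref{eq:help5}, \eqref{eq:help6} and the max-branch of Lemma \ref{lemma:dirderivative}.

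The main obstacle is precisely this bookkeeping on $\overline{\mathcal I^+}$ and $\overline{\mathcal I^-}$: the formula for $F_k'(\mathbf u,\mathbf d)$ depends on the \emph{original} classification of $k$ as active, $\mathcal I^\circ$, $\mathcal I^+$ or $\mathcal I^-$, whereas the sign information and the inequalities \eqref{eq:help4}--\eqref{eq:help6} are organized along the \emph{modified} sets. One must therefore split each modified inactive set back into its three constituents and pair the correct branch of Lemma \ref{lemma:dirderivative} with the matching help-estimate, noting in particular that for the genuine $\mathcal I^+$ and $\mathcal I^-$ indices it is the complementarity of \eqref{eq:LCP} via \eqref{eq:dx}, rather than \eqref{eq:help4}--\eqref{eq:help6}, that produces the bound. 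Once the componentwise estimate is secured on all five sets, the summation and the strict sign conclusion are immediate.
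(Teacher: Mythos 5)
Your proposal is correct and follows essentially the same route as the paper: the same partition into $\overline{\mathcal A}$, $\overline{\mathcal I^\circ}$, the genuine sets $\mathcal I^\pm$ and the anomalous subsets $\mathcal A^+_+,\mathcal A^-_-,\mathcal I^\circ_\pm$, the same use of the identities and inequalities \eqref{eq:help1}--\eqref{eq:help6}, and the same complementarity argument from \eqref{eq:LCP} and \eqref{eq:dx} to evaluate the $\min$/$\max$ branches on $\mathcal I^\pm$ as $-F_k(\mathbf u)$. The only difference is cosmetic: you phrase the estimate componentwise where the paper sums the eight contributions $T_1,\dots,T_8$.
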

\begin{proof}
The proof follows the idea of \cite[Proof of Proposition 5]{Pa91}. We have 
\begin{equation}\nonumber
\Theta'(\mathbf u,\mathbf d)=2\langle\mathbf F(\mathbf u),\mathbf F'(\mathbf u,\mathbf d)\rangle=2\sum\limits_{i=1}^8 T_i,
\end{equation}
where we have with Lemma \ref{lemma:dirderivative}
\begin{align*}
\begin{split}
T_1&:=\sum\limits_{k\in\overline{\mathcal A}(\mathbf u)} F_k(\mathbf u)\gamma(\mathbf M\mathbf d)_k,\\
T_3&:=\sum\limits_{k\in\mathcal I^+(\mathbf u)} F_k(\mathbf u)\min\{d_k,\gamma(\mathbf M\mathbf d)_k\},\\
T_5&:=\sum\limits_{k\in\mathcal A^+_+(\mathbf u)} F_k(\mathbf u)\gamma(\mathbf M\mathbf d)_k,\\
T_7&:=\sum\limits_{k\in\mathcal I^\circ_+(\mathbf u)} u_k d_k,\\
\end{split}
\hspace{-0.27cm}
\begin{split}
T_2&:=\sum\limits_{k\in\overline{\mathcal I^\circ}(\mathbf u)} u_k d_k,\\
T_4&:=\sum\limits_{k\in\mathcal I^-(\mathbf u)} F_k(\mathbf u)\max\{d_k,\gamma(\mathbf M\mathbf d)_k\},\\
T_6&:=\sum\limits_{k\in\mathcal A^-_-(\mathbf u)} F_k(\mathbf u)\gamma(\mathbf M\mathbf d)_k,\\
T_8&:=\sum\limits_{k\in\mathcal I^\circ_-(\mathbf u)} u_k d_k.
\end{split}
\end{align*}

For $k\in\mathcal I^+(\mathbf u)$, we have 
\begin{align*}
\min\{d_k,\gamma(\mathbf M\mathbf d)_k\}=\min\{d_k+F_k(\mathbf u),\gamma(\mathbf M\mathbf d)_k+F_k(\mathbf u)\}-F_k(\mathbf u)=-F_k(\mathbf u)
\end{align*}
because of \eqref{eq:LCP} and \eqref{eq:dx}. Similarly, for $k\in\mathcal I^-(\mathbf u)$ we have
\begin{align*}
\max\{d_k,\gamma(\mathbf M\mathbf d)_k\}=\max\{d_k+F_k(\mathbf u),\gamma(\mathbf M\mathbf d)_k+F_k(\mathbf u)\}-F_k(\mathbf u)=-F_k(\mathbf u).
\end{align*}
With \eqref{eq:help1}--\eqref{eq:help6}, we obtain
\begin{equation}\nonumber
\Theta'(\mathbf u,\mathbf d)=2\sum\limits_{i=1}^8 T_i\le -2\sum\limits_{k=1}^n F_k(\mathbf u)^2=-2\Theta(\mathbf u),
\end{equation}
finishing the proof.
 \end{proof}

We choose the stepsizes $t_j\in (0,1]$ in \eqref{eq:Newtonupdate} by the well-known Armijo rule
\begin{equation}\nonumber
t_j:=\max\{\beta^l: \Theta(\mathbf u^{(j)}+\beta^{l}\mathbf d^{(j)})\le (1-2\sigma \beta^l)\Theta(\mathbf u^{(j)}),\quad l=0,1,\ldots\},
\end{equation}
where $\beta\in (0,1)$ and $\sigma\in(0,\frac{1}{2})$, see also \cite{HaPaRa92, HaRa15,ItKu09, Pa90, Pa91, Qi93}. These stepsizes can be computed in finitely many iterations. We cite the following lemma from \cite[Proposition 4.1]{HaRa15}.
 
 \begin{lemma}\label{lemma:armijo}
Let $\beta\in (0,1)$, $\sigma\in (0,\frac{1}{2})$. 
Let $\mathbf u^{(j)}\in\mathbb R^n$ with $\Theta(\mathbf u^{(j)})>0$ and let $\mathbf d^{(j)}=\mathbf d(\mathbf u^{(j)})$ be computed by \eqref{eq:d}. Then, there exists a finite index $l\in\mathbb N$ with 
\begin{equation}\label{eq:Armijoinequality}
\Theta(\mathbf u^{(j)}+\beta^{l}\mathbf d^{(j)})\le (1-2\sigma \beta^l)\Theta(\mathbf u^{(j)}). 
\end{equation} 
 \end{lemma}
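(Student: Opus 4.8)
The plan is to argue by contradiction, exploiting the directional differentiability of $\Theta$ together with the strict descent estimate already established in Lemma \ref{lemma:descent}. Write $\mathbf u:=\mathbf u^{(j)}$ and $\mathbf d:=\mathbf d^{(j)}$ for brevity, and recall that since $\mathbf F$ is directionally differentiable, so is the merit functional $\Theta=\|\mathbf F\|_2^2$, with $\Theta'(\mathbf u,\mathbf d)=2\langle\mathbf F(\mathbf u),\mathbf F'(\mathbf u,\mathbf d)\rangle$ as noted after Lemma \ref{lemma:dirderivative}. Suppose, contrary to the claim, that the Armijo inequality \eqref{eq:Armijoinequality} fails for every $l\in\mathbb N$. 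Then for all $l$ one has
\[
\Theta(\mathbf u+\beta^l\mathbf d)>(1-2\sigma\beta^l)\Theta(\mathbf u),
\]
which, after subtracting $\Theta(\mathbf u)$ and dividing by $\beta^l>0$, rearranges to
\[
\frac{\Theta(\mathbf u+\beta^l\mathbf d)-\Theta(\mathbf u)}{\beta^l}>-2\sigma\Theta(\mathbf u).
\]

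Next I would pass to the limit $l\to\infty$. Since $\beta\in(0,1)$, the stepsizes $\beta^l$ form a sequence decreasing to zero, so the left-hand difference quotient converges to the one-sided directional derivative $\Theta'(\mathbf u,\mathbf d)$; this is precisely where directional differentiability of $\Theta$ enters, guaranteeing that the limit along the geometric null sequence $(\beta^l)_l$ agrees with the defining limit as $t\searrow 0$. Passing to the limit therefore yields $\Theta'(\mathbf u,\mathbf d)\ge-2\sigma\Theta(\mathbf u)$.

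Finally I would combine this with the descent estimate $\Theta'(\mathbf u,\mathbf d)\le-2\Theta(\mathbf u)$ from Lemma \ref{lemma:descent}, giving $-2\Theta(\mathbf u)\ge-2\sigma\Theta(\mathbf u)$. Dividing by $\Theta(\mathbf u)>0$ produces $-2\ge-2\sigma$, i.e.\ $\sigma\ge 1$, which contradicts the standing assumption $\sigma\in(0,\tfrac{1}{2})$. Hence some finite $l$ must satisfy \eqref{eq:Armijoinequality}, completing the proof.

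As for the main obstacle: there is essentially no deep difficulty here, since the heavy lifting—the descent inequality \eqref{eq:descent}—was already carried out in Lemma \ref{lemma:descent}. The only point requiring genuine care is the interchange of limit and difference quotient in the second step: one must justify that the directional derivative, defined as the limit $t\searrow 0$, is correctly recovered along the \emph{specific} null sequence $\beta^l$. This is immediate from the existence of the one-sided limit, but it should be stated explicitly so as not to conflate directional differentiability with full (Fr\'echet) differentiability, the latter of which $\Theta$ does not enjoy at the nonsmooth points of $\mathbf F$.
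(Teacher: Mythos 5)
Your proposal is correct and follows essentially the same route as the paper: the paper's proof likewise invokes the descent estimate $\Theta'(\mathbf u^{(j)},\mathbf d^{(j)})\le-2\Theta(\mathbf u^{(j)})<0$ from Lemma \ref{lemma:descent} and then defers to the standard Armijo argument of \cite[Proof of Proposition 4.1]{HaRa15}, which is precisely the contradiction-and-limit computation you spell out. Your explicit remark about recovering the one-sided directional derivative along the null sequence $\beta^l$ is a fair point of care, but it introduces no new idea beyond the cited argument.
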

 \begin{proof}
According to Lemma \ref{lemma:descent}, it holds $\Theta'(\mathbf u^{(j)},\mathbf d^{(j)})\le-2\Theta(\mathbf u^{(j)})<0.$
The remainder of the proof follows \cite[Proof of Proposition 4.1]{HaRa15}.
\end{proof}

\begin{algorithm}[t]\floatname{algorithm}{Algorithm}
\vspace{0.2cm}
\footnotesize 
\begin{algorithmic}
\STATE{Choose a starting vector $\mathbf u^{(0)}\in\mathbb R^n$, parameters  $\beta\in (0,1)$, $\sigma\in (0,\frac{1}{2})$ and a tolerance $tol>0$ and set $j:=0$. In case of \texttt{hybridBSSN}, additionally choose $j_{max}\in\mathbb N$ and $t_{min}>0$.}
\IF{ \texttt{BSSN} is used \OR \texttt{hybridSSN} is used}
\STATE{Replace the modified index sets \eqref{eq:A+modified}--\eqref{eq:I-modified} by the index sets \eqref{eq:A+}--\eqref{eq:I-} in \eqref{eq:LCP}--\eqref{eq:BSSNz} and \eqref{eq:d}.}
\ENDIF
\WHILE{$\|\mathbf F(\mathbf u^{(j)})\|_2\ge tol $}
\STATE{Compute the Newton direction $\mathbf d^{(j)}$ from \eqref{eq:d}.}
\STATE{$t_j:=1$} 
\WHILE{$\Theta(\mathbf u^{(j)}+t_j\mathbf d^{(j)})>(1-2\sigma t_j)\Theta(\mathbf u^{(j)})$} \STATE{$t_j:=t_j\beta$} \ENDWHILE
\STATE{$\mathbf u^{(j+1)}:=\mathbf u^{(j)}+t_j \mathbf d^{(j)}$}
\STATE{$j:=j+1$}
\IF{\texttt{hybridSSN} is used \AND $j>j_{max}$ \AND $t_j<t_{min}$}
\STATE{Use the modified index sets \eqref{eq:A+modified}--\eqref{eq:I-modified} in \eqref{eq:LCP}--\eqref{eq:BSSNz} and \eqref{eq:d} for all following iterations.}
\ENDIF
\ENDWHILE 
\end{algorithmic}
\vspace{0.2cm}
\caption{The B-semismooth Newton methods \texttt{BSSN}, \texttt{modBSSN} and \texttt{hybridBSSN}}\label{algo1}
\end{algorithm}

The algorithm of the modified B-semismooth Newton method, in the following denoted by \texttt{modBSSN}, is stated in Algorithm \ref{algo1}. The feasibility of Algorithm \texttt{modBSSN} is guaranteed because of the lemmata stated above. 

\begin{remark} Pang \cite{Pa91} introduced a modified B-Newton method for a nonlinear complementarity problem. Han, Pang and Rangaraj \cite{HaPaRa92} interpreted this iteration as a generalized Newton method
\begin{equation}\nonumber
\mathbf F(\mathbf u^{(j)})+\tilde{\mathbf G}(\mathbf u^{(j)},\mathbf d^{(j)})=\mathbf 0,\qquad \mathbf u^{(j+1)}=\mathbf u^{(j)}+t_j\mathbf d^{(j)},\qquad j=0,1,\ldots,
\end{equation}
where $\tilde{\mathbf G}\colon\mathbb R^n\times \mathbb R^n\to\mathbb R^n$ fulfills the assumption that $\tilde{\mathbf G}(\mathbf u,\cdot)$ is surjective for each fixed $\mathbf u\in\mathbb R^n$, and
\begin{equation}\nonumber
2\big\langle \mathbf F(\mathbf u),\tilde{\mathbf G}(\mathbf u,\mathbf d)\big\rangle\ge \Theta'(\mathbf u,\mathbf d)
\end{equation}
for all $\mathbf u$, $\mathbf d\in\mathbb R^n$, see \cite[Section 2.3]{HaPaRa92}. In the very same way, our Algorithm \texttt{modBSSN} can be interpreted as a generalized Newton method with $\tilde{\mathbf G}(\mathbf u^{(j)},\mathbf d^{(j)})=\mathbf G(\mathbf u^{(j)})\mathbf d^{(j)}$ and $\mathbf G$ from \eqref{eq:G}, cf.\ Lemma \ref{lemma:descent}.
\end{remark}

\subsection{A B-semismooth Newton method and its feasibility}\label{sec:22}

The generalized formulation of the B-semismooth Newton method  \eqref{eq:SSNupdate}, \eqref{eq:BNewtonequation} from \cite{HaRa15} for the setting \eqref{eq:min}, in the following denoted by \texttt{BSSN}, is identical to Algorithm \texttt{modBSSN} replacing the modified index sets \eqref{eq:A+modified}--\eqref{eq:I-modified} by the original index sets \eqref{eq:A+}--\eqref{eq:I-} in \eqref{eq:LCP}--\eqref{eq:BSSNz} and \eqref{eq:d}, cf.\ Algorithm \ref{algo1} and \cite{HaRa15}. Analogously to the proofs in Section \ref{sec:21}, the Newton directions $\mathbf d^{(j)}$ can be shown to be uniquely determined and the Armijo stepsizes are well-defined because the Newton directions are descent directions w.r.t.\ the merit functional $\Theta$. Thus, the feasibility of the Algorithm \texttt{BSSN} is guaranteed.

\begin{remark}\label{remark:indexsets}
The modification of the index sets in Algorithm \texttt{modBSSN} is needed to prove global convergence without any additional requirements, see Section \ref{sec:globalconvergence}. Let $\mathbf u^*$ be the unique zero of $\mathbf F$ and let $\mathcal I^+(\mathbf u^*)\cup\mathcal I^-(\mathbf u^*)=\emptyset$, i.e.\ $\mathbf F$ is smooth at $\mathbf u^*$. Then, there exists a neighborhood $U$ of $\mathbf u^*$ where the index subsets \eqref{eq:A++}--\eqref{eq:I0-} are empty for all $\mathbf u\in U$, i.e.\ the modified index sets \eqref{eq:A+modified}--\eqref{eq:I-modified} match the original index sets \eqref{eq:A+}--\eqref{eq:I-}. Therefore, Algorithm \texttt{modBSSN} locally coincides with $\mathtt{BSSN}$ in a neighborhood of the zero $\mathbf u^*$ of $\mathbf F$ if $\mathcal I^+(\mathbf u^*)\cup\mathcal I^-(\mathbf u^*)=\emptyset$ and hence is a semismooth Newton method there. 
\end{remark}

\subsection{A globally convergent hybrid method}\label{sec:23}
The B-semismooth Newton method (Algorithm \texttt{BSSN}) from Section \ref{sec:22} is efficient in practice because the index sets $\mathcal I^\pm(\mathbf u^{(j)})$ in step $j$ are usually empty so that the generalized Newton equation simplifies to a system of linear equations of the size $|\mathcal A(\mathbf u^{(j)})|$. The size of the system of linear equations usually decreases in the course of the iteration. Nevertheless, the method may fail to converge, see Remark \ref{remark:indexsets} and Theorem \ref{th:convergenceoldSSN}. However, the global convergence of Algorithm \texttt{modBSSN} from Section \ref{sec:21} is ensured by Theorem \ref{th:convergence} but here a mixed linear complementarity problem has to be solved in each iteration, see \eqref{eq:d}. Additionally, in order to set up the matrix $\mathbf N$ and the vector $\mathbf z$ from \eqref{eq:BSSNN} and \eqref{eq:BSSNz}, $|\overline{\mathcal I}(\mathbf u^{(j)})|+1$ systems of linear equations of the size $|\overline{\mathcal A}(\mathbf u^{(j)}|$ with the same matrix have to be solved if $\overline{\mathcal I^\pm}(\mathbf u^{(j)})\neq \emptyset$. Note that in \eqref{eq:dA} resp.\ \eqref{eq:d} no additional system of linear equations has to be solved for the computation of $\mathbf d^{(j)}_{\overline{\mathcal A}}$. Nevertheless, Algorithm \texttt{modBSSN} is usually less efficient than Algorithm \texttt{BSSN}.

We suggest a hybrid method by starting with Algorithm \texttt{BSSN} and switching to Algorithm \texttt{modBSSN} when Algorithm \texttt{BSSN} begins to stagnate, by replacing the modified index sets \eqref{eq:A+modified}--\eqref{eq:I-modified} by the index sets \eqref{eq:A+}--\eqref{eq:I-} in \eqref{eq:LCP}--\eqref{eq:BSSNz} and \eqref{eq:d}. In our numerical experiments, we switch to Algorithm \texttt{modBSSN} if the number of Newton steps exceeds a limit $j_{max}\in\mathbb N$ and if the chosen stepsize is smaller than a threshold $t_{min}>0$, i.e.\ if $j> j_{max}$ and $t_j< t_{min}$. In the sequel, this hybrid method is called \texttt{hybridBSSN}. An overview of the proposed methods is given in Algorithm \ref{algo1}. Similar hybrid methods, combining the fast local convergence properties of a local semismooth Newton method with the globally convergent generalized Newton method from \cite{HaPaRa92} were proposed by Qi \cite{Qi93} and Ito and Kunisch \cite{ItKu09}.

\section{Global convergence and local convergence speed}\label{sec:globalconvergence}

In this section, we consider the convergence properties of the algorithms from Section \ref{sec:feasibility}. 
\subsection{Convergence of the modified B-semismooth Newton method}\label{sec:31}
In the following, we address the global convergence of Algorithm \texttt{modBSSN} and its convergence speed in a neighborhood of the zero of $\mathbf F$.
Concerning the boundedness of the sequence of Newton directions $\{\mathbf d^{(j)}\}_j$, we cite \cite[Proposition 4.6]{HaRa15}.

\begin{lemma}\label{lemma:dbounded}
Let $\mathbf u\in\mathbb R^n$ and $\mathbf d=\mathbf d(\mathbf u)$ be the solution to \eqref{eq:d}. Then, there exists a constant $C=C(n)>0$ independent of $\mathbf u$, with
\begin{equation}
\|\mathbf d\|_2\le C\|\mathbf F(\mathbf u)\|_2.
\end{equation}
\end{lemma}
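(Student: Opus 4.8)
The plan is to bound each block of $\mathbf d$ from the explicit representation \eqref{eq:d}, thereby reducing everything to a bound on the LCP solution $\mathbf x$ and on $\mathbf u$ restricted to the inactive-type index sets. The crucial first step is the componentwise identity $F_k(\mathbf u)=u_k$ on the inactive set $\mathcal I(\mathbf u)$: for $k\in\mathcal I^\circ(\mathbf u)$ one has $u_k<\gamma(\nabla g(\mathbf u))_k+\gamma w_k$, so \eqref{eq:Fmin} gives $F_k(\mathbf u)=u_k$, and for $k\in\mathcal I^\pm(\mathbf u)$ the defining equalities \eqref{eq:Iplus}, \eqref{eq:I-} likewise force $F_k(\mathbf u)=u_k$. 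Inspecting the subsets \eqref{eq:A++}--\eqref{eq:I0-} that enter the modified sets \eqref{eq:I+modified}, \eqref{eq:I-modified}, I would then check that $|u_k|\le|F_k(\mathbf u)|$ on all of $\overline{\mathcal I^+}(\mathbf u)\cup\overline{\mathcal I^-}(\mathbf u)$: on $\mathcal A^+_+(\mathbf u)$ one has $\gamma(\nabla g(\mathbf u))_k+\gamma w_k<u_k<0$, whence $F_k(\mathbf u)=\gamma(\nabla g(\mathbf u))_k+\gamma w_k$ satisfies $|u_k|<|F_k(\mathbf u)|$, symmetrically on $\mathcal A^-_-(\mathbf u)$, while $\mathcal I^\circ_\pm(\mathbf u)\subset\mathcal I^\circ(\mathbf u)$. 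Together these yield $\|\mathbf u_{\overline{\mathcal I}}\|_2\le\|\mathbf F(\mathbf u)\|_2$.

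Second, I would bound $\|\mathbf x\|_2$ by $\|\mathbf z\|_2$. Since $\mathbf N$ is symmetric positive definite, the complementarity condition in \eqref{eq:LCP} gives $\langle\mathbf x,\mathbf N\mathbf x\rangle=-\langle\mathbf x,\mathbf z\rangle$, so $\lambda_{\min}(\mathbf N)\|\mathbf x\|_2^2\le\|\mathbf x\|_2\|\mathbf z\|_2$. The key is a uniform lower bound on $\lambda_{\min}(\mathbf N)$. Writing $\mathbf D$ for the diagonal sign matrix equal to $+1$ on $\overline{\mathcal I^+}(\mathbf u)$ and $-1$ on $\overline{\mathcal I^-}(\mathbf u)$, a direct comparison of the blocks in \eqref{eq:BSSNN} shows $\mathbf N=\gamma\mathbf D\mathbf S\mathbf D$, where $\mathbf S$ is the Schur complement of the $\overline{\mathcal A},\overline{\mathcal A}$ block in the principal submatrix of $\mathbf M=\nabla^2 g(\mathbf u)$ on the indices $\overline{\mathcal A}\cup\overline{\mathcal I^+}\cup\overline{\mathcal I^-}$. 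As $\mathbf D$ is orthogonal and the smallest eigenvalue of a Schur complement of an SPD matrix is bounded below by $\lambda_{\min}$ of the full matrix, Assumption \ref{ass:A2} gives $\lambda_{\min}(\mathbf N)\ge\gamma c_1$ uniformly in $\mathbf u$, hence $\|\mathbf x\|_2\le(\gamma c_1)^{-1}\|\mathbf z\|_2$.

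Third, I would estimate $\|\mathbf z\|_2$ from \eqref{eq:BSSNz}. Every matrix block there is a submatrix of $\mathbf M$ and so has spectral norm at most $c_2$, while $\mathbf M_{\overline{\mathcal A},\overline{\mathcal A}}^{-1}$ has norm at most $c_1^{-1}$ since $\mathbf M_{\overline{\mathcal A},\overline{\mathcal A}}$ is a principal submatrix of $\mathbf M$; consequently $\|\mathbf N\|_2$ is bounded by a constant depending only on $\gamma,c_1,c_2$. Replacing $\mathbf u_{\overline{\mathcal I^\circ}}$, $\mathbf u_{\overline{\mathcal I^+}}$ and $\mathbf u_{\overline{\mathcal I^-}}$ with the help of the first step, and using $\|\mathbf F(\mathbf u)_{\overline{\mathcal A}}\|_2,\|\mathbf F(\mathbf u)_{\overline{\mathcal I^\pm}}\|_2\le\|\mathbf F(\mathbf u)\|_2$, I obtain $\|\mathbf z\|_2\le C_1\|\mathbf F(\mathbf u)\|_2$ and thus $\|\mathbf x\|_2\le C_2\|\mathbf F(\mathbf u)\|_2$.

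Finally, I would insert these bounds back into \eqref{eq:d}. The blocks $\mathbf d_{\overline{\mathcal I^\circ}}=-\mathbf u_{\overline{\mathcal I^\circ}}$ and $\mathbf d_{\overline{\mathcal I^\pm}}=\pm\mathbf x_{\overline{\mathcal I^\pm}}-\mathbf u_{\overline{\mathcal I^\pm}}$ are immediately controlled by $\|\mathbf F(\mathbf u)\|_2$, and the active block is then bounded by $\|\mathbf d_{\overline{\mathcal A}}\|_2\le c_1^{-1}\big(c_2\|\mathbf d_{\overline{\mathcal I}}\|_2+\gamma^{-1}\|\mathbf F(\mathbf u)_{\overline{\mathcal A}}\|_2\big)$, again a multiple of $\|\mathbf F(\mathbf u)\|_2$. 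Collecting the constants gives $\|\mathbf d\|_2\le C\|\mathbf F(\mathbf u)\|_2$ with $C$ depending only on $\gamma,c_1,c_2$ and at worst on $n$ through the number of blocks. I expect the main obstacle to be the uniform lower bound $\lambda_{\min}(\mathbf N)\ge\gamma c_1$: recognizing $\mathbf N$ as a sign-conjugated Schur complement and invoking the Schur-complement eigenvalue inequality is exactly what makes the estimate independent of $\mathbf u$, whereas the remaining steps are routine operator-norm estimates resting on Assumption \ref{ass:A2}.
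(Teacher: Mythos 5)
Your proposal is correct and follows essentially the same route as the paper, which delegates the argument to the cited proof of Proposition 4.6 in the earlier work after noting precisely the point you identify as crucial, namely that the Rayleigh quotient of $\mathbf N$ from \eqref{eq:BSSNN} is uniformly bounded from below (your Schur-complement identity $\mathbf N=\gamma\mathbf D\mathbf S\mathbf D$ together with eigenvalue interlacing gives $\lambda_{\min}(\mathbf N)\ge\gamma c_1$, which is exactly that bound). The remaining steps --- the identity $F_k(\mathbf u)=u_k$ on $\mathcal I(\mathbf u)$, the inequality $|u_k|\le|F_k(\mathbf u)|$ on the modified inactive sets, the LCP estimate $\|\mathbf x\|_2\le\lambda_{\min}(\mathbf N)^{-1}\|\mathbf z\|_2$, and the back-substitution into \eqref{eq:d} --- correctly reconstruct in full the details the paper leaves to the citation.
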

\begin{proof}
The proof follows \cite[Proof of Proposition 4.6]{HaRa15} by substituting the index sets $\mathcal A^\pm$, $\mathcal I^\circ$ and $\mathcal I^\pm$ by the modified index sets $\overline{\mathcal A^\pm}$, $\overline{\mathcal I^\circ}$ and $\overline{\mathcal I^\pm}$ respectively. An inspection of the proof of Lemma 3.3 from \cite{HaRa15} and Assumption \ref{ass:1} shows that the Rayleigh quotient of $\mathbf N=\mathbf N(\mathbf u)$ from \eqref{eq:BSSNN} is bounded from below.
\end{proof}

In the following theorem, we present our main result on the global convergence of Algorithm \texttt{modBSSN}.

\begin{theorem}\label{th:convergence}
Let $\mathbf u^*\in\mathbb R^n$ be an accumulation point of the sequence of iterates $\{\mathbf u^{(j)}\}_j$ produced by Algorithm $\mathtt{modBSSN}$. Then, we have $\Theta(\mathbf u^*)=0$.
\end{theorem}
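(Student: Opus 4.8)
The plan is to run the standard line-search globalization argument built around the merit functional $\Theta$ and to argue by contradiction. First I would record that the Armijo rule together with Lemma~\ref{lemma:armijo} yields $\Theta(\mathbf u^{(j+1)})\le(1-2\sigma t_j)\Theta(\mathbf u^{(j)})\le\Theta(\mathbf u^{(j)})$, so that $\{\Theta(\mathbf u^{(j)})\}_j$ is nonincreasing and bounded below by $0$, hence convergent to some $\Theta^*\ge 0$. Since all iterates lie in the level set $L_\Theta(\mathbf u^{(0)})$, which is compact by Assumption~\ref{ass:A3}, and $\Theta=\|\mathbf F\|_2^2$ is continuous, every accumulation point $\mathbf u^*$ satisfies $\Theta(\mathbf u^*)=\Theta^*$. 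I would then suppose, for contradiction, that $\Theta^*>0$ and derive a violation of the descent estimate from Lemma~\ref{lemma:descent}.

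The second step extracts the essential subsequence. Telescoping the Armijo inequality over $j$ gives $\sum_j t_j\Theta(\mathbf u^{(j)})\le\tfrac{1}{2\sigma}\Theta(\mathbf u^{(0)})<\infty$, so $t_j\Theta(\mathbf u^{(j)})\to 0$. Passing to a subsequence with $\mathbf u^{(j_k)}\to\mathbf u^*$ gives $\Theta(\mathbf u^{(j_k)})\to\Theta^*>0$, which forces $t_{j_k}\to 0$. Consequently, for large $k$ the trial stepsize $s_k:=t_{j_k}/\beta$ was rejected by the Armijo test, i.e.
\[
\frac{\Theta(\mathbf u^{(j_k)}+s_k\mathbf d^{(j_k)})-\Theta(\mathbf u^{(j_k)})}{s_k}>-2\sigma\,\Theta(\mathbf u^{(j_k)}),
\]
with $s_k\downarrow 0$. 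By Lemma~\ref{lemma:dbounded} the directions $\mathbf d^{(j_k)}$ are bounded, so after passing to a further subsequence I may assume $\mathbf d^{(j_k)}\to\mathbf d^*$.

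The decisive third step lets $k\to\infty$. On the one hand, exploiting the piecewise-smooth (B-differentiable) structure of $\mathbf F$ together with the uniform bounds on $\nabla^2 g$ from Assumption~\ref{ass:A2}, I would show that the rejected-step difference quotients obey $\limsup_k(\cdots)\le\Theta'(\mathbf u^*,\mathbf d^*)$, so that the displayed inequality passes to $\Theta'(\mathbf u^*,\mathbf d^*)\ge-2\sigma\Theta^*$. On the other hand, I claim that the limiting direction still satisfies the descent bound $\Theta'(\mathbf u^*,\mathbf d^*)\le-2\Theta^*$. Combining the two yields $-2\sigma\Theta^*\le\Theta'(\mathbf u^*,\mathbf d^*)\le-2\Theta^*$, hence $(1-\sigma)\Theta^*\le 0$; since $\sigma<\tfrac12$ this contradicts $\Theta^*>0$ and forces $\Theta(\mathbf u^*)=\Theta^*=0$.

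The main obstacle is the limiting descent bound $\Theta'(\mathbf u^*,\mathbf d^*)\le-2\Theta^*$, because $\mathbf d^*$ need not coincide with the Newton direction $\mathbf d(\mathbf u^*)$: at indices $k\in\mathcal I^+(\mathbf u^*)\cup\mathcal I^-(\mathbf u^*)$, where $\mathbf F$ is nonsmooth, the index-set membership of the nearby iterates $\mathbf u^{(j_k)}$ may differ from that at $\mathbf u^*$. This is precisely the gap that in the unmodified method forced the technical condition \eqref{eq:convergenceconditionSSN} on $\mathbf u^*$. Here I would instead argue termwise in $\Theta'(\mathbf u^*,\mathbf d^*)=2\sum_k F_k(\mathbf u^*)F'_k(\mathbf u^*,\mathbf d^*)$ via Lemma~\ref{lemma:dirderivative}: the modification reclassifies exactly those borderline indices (the subsets $\mathcal A^+_+,\mathcal A^-_-,\mathcal I^\circ_+,\mathcal I^\circ_-$), for which $u_k$ carries the ``wrong'' sign, into $\overline{\mathcal I^+}$ or $\overline{\mathcal I^-}$, and on these the complementarity-based inequalities \eqref{eq:help4}--\eqref{eq:help6} provide two-sided control, pushing both $\gamma(\mathbf M\mathbf d)_k$ and $d_k$ in the right direction. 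Because these are inequalities with slack rather than the equalities \eqref{eq:help1}--\eqref{eq:help3}, they are stable under the passage $\mathbf u^{(j_k)}\to\mathbf u^*$, $\mathbf d^{(j_k)}\to\mathbf d^*$ (using continuity of $\nabla^2 g$), and guarantee $F_k(\mathbf u^*)F'_k(\mathbf u^*,\mathbf d^*)\le-F_k(\mathbf u^*)^2$ termwise even at the kink indices, where the $\min$/$\max$ in Lemma~\ref{lemma:dirderivative} would otherwise spoil the sign. Summing recovers the desired bound. A secondary technical point is the upper semicontinuity of the difference quotient used in the third step, which I would settle via the uniform Lipschitz bounds on $\nabla^2 g$ over the compact level set.
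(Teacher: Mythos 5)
Your overall framework (monotone decrease of $\Theta$, $t_j\Theta(\mathbf u^{(j)})\to 0$, reduction to the case $t_{j_k}\to 0$, the rejected-step inequality, boundedness of the directions) matches the paper, and your discussion of claim (b) via the slack inequalities \eqref{eq:help4}--\eqref{eq:help6} is in the right spirit. But there is a genuine gap, and it sits exactly where you dismiss it as ``a secondary technical point'': the claim (a) that the rejected-step difference quotients satisfy $\limsup_k(\cdots)\le\Theta'(\mathbf u^*,\mathbf d^*)$. This is not a consequence of Lipschitz or uniform bounds on $\nabla^2 g$; it is a statement about the nonsmoothness of $\mathbf F$ at $\mathbf u^*$, and it is false in general. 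Concretely, take a coordinate $k\in\mathcal I^+(\mathbf u^*)$ with $c:=u_k^*=\gamma(\nabla g(\mathbf u^*))_k+\gamma w_k>0$, so $F_k=\min\{a_k,b_k\}$ with $a_k(\mathbf u)=\gamma(\nabla g(\mathbf u))_k+\gamma w_k$, $b_k(\mathbf u)=u_k$ and $a_k(\mathbf u^*)=b_k(\mathbf u^*)=c\neq 0$. If the iterates approach $\mathbf u^*$ from the region where $b_k<a_k$ and the (vanishing) trial steps are small relative to the distance to the kink, the $k$-th term of the difference quotient converges to $2c\,d_k^*$, whereas the corresponding term of $\Theta'(\mathbf u^*,\mathbf d^*)$ is $2c\min\{\gamma(\nabla^2 g(\mathbf u^*)\mathbf d^*)_k,\,d_k^*\}$; whenever $d_k^*>\gamma(\nabla^2 g(\mathbf u^*)\mathbf d^*)_k$ the difference quotient strictly exceeds the directional-derivative term. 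Requiring (a) to hold is essentially a localized form of the very condition \eqref{eq:convergenceconditionSSN} that the modified method is designed to dispense with; since $\Theta$ and $\mathbf u^*$ are unchanged by the modification of the index sets, the modification cannot make (a) true. With (a) broken, establishing (b) alone yields no contradiction with the rejected-step inequality.

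The paper's proof avoids this by never forming $\Theta'(\mathbf u^*,\mathbf d^*)$ at all. It expands the actual finite difference $\Theta(\mathbf u^{(j)})-\Theta(\hat{\mathbf u}^{(j)})$ termwise, grouped by the index sets at $\mathbf u^*$ but Taylor-expanded along whichever branch of the $\min$/$\max$ is active at $\mathbf u^{(j)}$, and shows that on \emph{every} branch the identities and inequalities \eqref{eq:help1}--\eqref{eq:help6} (which hold at $\mathbf u^{(j)}$, not at $\mathbf u^*$) give the lower bound $2\beta^{l_j-1}F_k(\mathbf u^{(j)})^2+o(\beta^{l_j-1}\|\mathbf d^{(j)}\|_2)$, with the indices where $F_k(\mathbf u^*)=0$ contributing $o(\beta^{l_j-1})$. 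Dividing \eqref{eq:convergenceproof1} by $\beta^{l_j-1}$ and passing to the limit then yields $2\Theta(\mathbf u^*)\le 2\sigma\Theta(\mathbf u^*)$ directly. To repair your argument you would have to replace the two-step detour through $\Theta'(\mathbf u^*,\mathbf d^*)$ by this single direct estimate, i.e.\ prove $\liminf_{j}\beta^{-(l_j-1)}\big(\Theta(\mathbf u^{(j)})-\Theta(\hat{\mathbf u}^{(j)})\big)\ge 2\Theta(\mathbf u^*)$ branchwise; the case analysis you sketch for (b) is the right raw material for that, but it must be carried out on the finite differences rather than on the directional derivative at the limit point.
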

\begin{proof}
We proceed analogously to the proof of \cite[Theorem 1]{Pa91} and we also use the proof of \cite[Proposition 1]{Pa91}. We suppose $\Theta(\mathbf u^{(j)})>0$ for all $j$, because otherwise the claim is proven. Because of the Armijo rule \eqref{eq:Armijoinequality}, the sequence $\{\Theta(\mathbf u^{(j)})\}_j$ strictly decreases and is bounded from below by $0$, i.e.\ convergent. Let $t_j=\beta^{l_j}$ be the computed Armijo stepsize in step $j$. From the Armijo rule \eqref{eq:Armijoinequality}, it follows
\begin{equation}\nonumber
0<2\sigma t_j \Theta(\mathbf u^{(j)})\le \Theta(\mathbf u^{(j)})-\Theta(\mathbf u^{(j+1)})\to 0,\quad j\to\infty.
\end{equation}
Therefore, we have
\begin{equation}\nonumber
\lim\limits_{j\to\infty} t_j\Theta(\mathbf u^{(j)})=0.
\end{equation}
The level set $L_\Theta(\mathbf u^{(0)})=\{\mathbf u\in\mathbb R^n: \Theta(\mathbf u)\le\Theta(\mathbf u^{(0)})\}$ is bounded by Assumption \ref{ass:1}, implying that the sequence $\{\mathbf u^{(j)}\}_j$ is bounded and has an accumulation point $\mathbf u^*$. Let $\{\mathbf u^{(j)}\}_{j\in J}$ be a subsequence converging to $\mathbf u^*$. If the stepsizes $t_j$ are bounded away from zero, i.e.\ we have $\limsup_{j\to\infty, j\in J} t_j>0$, it directly follows $\Theta(\mathbf u^*)=0$.

Let us now consider the case $\limsup_{j\to\infty, j\in J} t_j=0$. Without loss of generality, we suppose $\lim_{j\to\infty, j\in J} t_j=0$. By the Armijo rule \eqref{eq:Armijoinequality}, we have for all $j\in J$
\begin{equation}\label{eq:convergenceproof1}
\Theta(\mathbf u^{(j)})-\Theta(\mathbf u^{(j)}+\beta^{l_j-1}\mathbf d^{(j)})<2\sigma\beta^{l_j-1}\Theta(\mathbf u^{(j)}).
\end{equation}
We define $\hat{\mathbf u}^{(j)}:=\mathbf u^{(j)}+\beta^{l_j-1}\mathbf d^{(j)}$. The sequence $\{\mathbf d^{(j)}\}_j$ of Newton directions is bounded because of Lemma \ref{lemma:dbounded}, implying that $\mathbf u^*$ is the limit of the subsequence $\{\hat{\mathbf u}^{(j)}\}_{j\in J}$. Therefore, without loss of generality we have
\begin{align*}
\mathcal A^+_+(\mathbf u^*)&\subset\mathcal A^+_+(\mathbf u^{(j)})\cap\mathcal A_+^+(\hat{\mathbf u}^{(j)}),\\
\mathcal A^-_-(\mathbf u^*)&\subset\mathcal A^-_-(\mathbf u^{(j)})\cap\mathcal A_-^-(\hat{\mathbf u}^{(j)}),\\
\mathcal I^\circ_+(\mathbf u^*)&\subset\mathcal I^\circ_+(\mathbf u^{(j)})\cap\mathcal I_+^\circ(\hat{\mathbf u}^{(j)}),\\
\mathcal I^\circ_-(\mathbf u^*)&\subset\mathcal I^\circ_-(\mathbf u^{(j)})\cap\mathcal I_-^\circ(\hat{\mathbf u}^{(j)}),
\end{align*}
 for all $j\in J$ large enough.
Now we consider
\begin{equation}\label{eq:convergenceproof2}
\Theta(\mathbf u^{(j)})-\Theta(\hat{\mathbf u}^{(j)})=\sum\limits_{i=1}^8\tilde T_i,
\end{equation}
where
\begin{align*}
\begin{split}
\tilde T_1&:=\sum\limits_{k\in\overline{\mathcal A}(\mathbf u^*)} \big(F_k(\mathbf u^{(j)})^2-F_k(\hat{\mathbf u}^{(j)})^2\big),\\
\tilde T_3&:=\sum\limits_{k\in\mathcal I^+(\mathbf u^*)} \hspace{-0.1cm}\big(F_k(\mathbf u^{(j)})^2-F_k(\hat{\mathbf u}^{(j)})^2\big),\\
\tilde T_5&:=\sum\limits_{k\in\mathcal A^+_+(\mathbf u^*)} \hspace{-0.1cm}\big(F_k(\mathbf u^{(j)})^2-F_k(\hat{\mathbf u}^{(j)})^2\big),\\
\tilde T_7&:=\sum\limits_{k\in\mathcal I^\circ_+(\mathbf u^*)} \hspace{-0.1cm}\big(F_k(\mathbf u^{(j)})^2-F_k(\hat{\mathbf u}^{(j)})^2\big),\\
\end{split}
\hspace{-0.3cm}
\begin{split}
&\tilde T_2:=\sum\limits_{k\in\overline{\mathcal I^\circ}(\mathbf u^*)}\hspace{-0.1cm} \big(F_k(\mathbf u^{(j)})^2-F_k(\hat{\mathbf u}^{(j)})^2\big),\\
&\tilde T_4:=\sum\limits_{k\in\mathcal I^-(\mathbf u^*)}\hspace{-0.1cm} \big(F_k(\mathbf u^{(j)})^2-F_k(\hat{\mathbf u}^{(j)})^2\big),\\
&\tilde T_6:=\sum\limits_{k\in\mathcal A^-_-(\mathbf u^*)} \hspace{-0.1cm}\big(F_k(\mathbf u^{(j)})^2-F_k(\hat{\mathbf u}^{(j)})^2\big),\\
&\tilde T_8:=\sum\limits_{k\in\mathcal I^\circ_-(\mathbf u^*)} \hspace{-0.1cm}\big(F_k(\mathbf u^{(j)})^2-F_k(\hat{\mathbf u}^{(j)})^2\big).
\end{split}
\end{align*}
In the following, we estimate each sum from below. Finally, we prove the claim by using \eqref{eq:convergenceproof1} and by taking the limit $j\to\infty$, $j\in J$.

If $k\in\overline{\mathcal A}(\mathbf u^*)$, we have for $j\in J$ large enough $k\in\overline{\mathcal A}(\mathbf u^{(j)})$, $k\in\mathcal A^+_+(\mathbf u^{(j)})$ or $k\in\mathcal A^-_-(\mathbf u^{(j)})$. Using \eqref{eq:help1}, \eqref{eq:help2}, \eqref{eq:help4} and \eqref{eq:help5}, we obtain
\begin{align*}
\tilde T_1&=\sum\limits_{k\in\overline{\mathcal A}(\mathbf u^*)} \big((\gamma(\nabla g(\mathbf u^{(j)}))_k\pm \gamma w_k)^2-(\gamma(\nabla g(\hat{\mathbf u}^{(j)}))_k\pm \gamma w_k)^2\big)\\
&=\sum\limits_{k\in\overline{\mathcal A}(\mathbf u^*)}-2\beta^ {l_j-1}(\gamma(\nabla g(\mathbf u^{(j)}))_k\pm \gamma w_k)\gamma(\nabla^2 g(\mathbf u^{(j)})\mathbf d^{(j)})_k\\
& \phantom{=}+ o(\|\hat{\mathbf u}^{(j)}-\mathbf u^{(j)}\|_2)\\
&\ge 2\beta^{l_j-1}\sum\limits_{k\in\overline{\mathcal A}(\mathbf u^*)}F_k(\mathbf u^{(j)})^2+o(\beta^{l_j-1}\|\mathbf d^{(j)}\|_2),\quad j\to\infty, j\in J.
\end{align*}
Analogously it follows with \eqref{eq:help4} and \eqref{eq:help5}
\begin{equation}\nonumber
\tilde T_5\ge 2\beta^{l_j-1} \sum\limits_{k\in\mathcal A^+_+(\mathbf u^*)}F_k(\mathbf u^{(j)})^2+o(\beta^{l_j-1}\|\mathbf d^{(j)}\|_2),\quad j\to\infty, j\in J,
\end{equation}
and
\begin{equation}\nonumber
\tilde T_6\ge 2\beta^{l_j-1} \sum\limits_{k\in\mathcal A^-_-(\mathbf u^*)}F_k(\mathbf u^{(j)})^2+o(\beta^{l_j-1}\|\mathbf d^{(j)}\|_2),\quad j\to\infty, j\in J.
\end{equation}
For $k\in \overline{I^\circ}(\mathbf u^*)$, we have to consider the cases $k\in\overline{\mathcal I^\circ}(\mathbf u^{(j)})$, $k\in\mathcal I^\circ_+(\mathbf u^{(j)})$ and $k\in \mathcal I^\circ_-(\mathbf u^{(j)})$. With \eqref{eq:help3} and \eqref{eq:help6}, we have
\begin{align*}
\tilde T_2&=\sum\limits_{k\in\overline{\mathcal I^\circ}(\mathbf u^*)}\big((u_k^{(j)})^2-(\hat{u}_k^{(j)})^2\big)=\sum\limits_{k\in\overline{\mathcal I^\circ}(\mathbf u^*)}\Big(-2\beta^{l_j-1}u_k^{(j)}d_k^{(j)}-(\beta^{l_j-1}d_k^{(j)})^2\Big)\\
&\ge 2\beta^{l_j-1} \sum\limits_{k\in\overline{\mathcal I^\circ}(\mathbf u^*)}(u_k^{(j)})^2-\sum\limits_{k\in\overline{\mathcal I^\circ}(\mathbf u^*)}(\beta^{l_j-1}d_k^{(j)})^2\\
&= 2\beta^{l_j-1} \sum\limits_{k\in\overline{\mathcal I^\circ}(\mathbf u^*)}F_k(\mathbf u^{(j)})^2-\sum\limits_{k\in\overline{\mathcal I^\circ}(\mathbf u^*)}(\beta^{l_j-1}d_k^{(j)})^2.
\end{align*}
Accordingly, it follows with \eqref{eq:help6}
\begin{equation}\nonumber
\tilde T_7\ge 2\beta^{l_j-1} \sum\limits_{k\in\mathcal I^\circ_+(\mathbf u^*)}F_k(\mathbf u^{(j)})^2-\sum\limits_{k\in\mathcal I^\circ_+(\mathbf u^*)}(\beta^{l_j-1}d_k^{(j)})^2
\end{equation}
and
\begin{equation}\nonumber
\tilde T_8\ge 2\beta^{l_j-1} \sum\limits_{k\in\mathcal I^\circ_-(\mathbf u^*)}F_k(\mathbf u^{(j)})^2-\sum\limits_{k\in\mathcal I^\circ_-(\mathbf u^*)}(\beta^{l_j-1}d_k^{(j)})^2.
\end{equation}
In the following, we treat the sum $\tilde T_3$. For $k\in\mathcal I^+(\mathbf u^*)$, we may assume without loss of generality
\begin{align*}
k\in\Big(\mathcal I^+(\mathbf u^{(j)})\cup\mathcal I^\circ(\mathbf u^{(j)})\cup\mathcal A^+(\mathbf u^{(j)})\Big)\cap\Big(\mathcal I^+(\hat{\mathbf u}^{(j)})\cup\mathcal I^\circ(\hat{\mathbf u}^{(j)})\cup\mathcal A^+(\hat{\mathbf u}^{(j)})\Big).
\end{align*}
We split $\mathcal I^+(\mathbf u^*)=S_1(\mathbf u^*)\cap S_2(\mathbf u^*)\cap S_3(\mathbf u^*)$, where
\begin{align*}
S_1(\mathbf u^*)&:=\{k: u_k^*=\gamma(\nabla g(\mathbf u^*))_k+\gamma w_k>0\},\\
S_2(\mathbf u^*)&:=\{k: u_k^*=\gamma(\nabla g(\mathbf u^*))_k+\gamma w_k=0\},\\
S_3(\mathbf u^*)&:=\{k: u_k^*=\gamma(\nabla g(\mathbf u^*))_k+\gamma w_k<0\}.
\end{align*}
For $k\in S_1(\mathbf u^*)$, we may assume with \eqref{eq:Fmin}
\begin{align*}
F_k(\mathbf u^{(j)})&=\min\{u_k^{(j)},\gamma (\nabla g(\mathbf u^{(j)}))_k+\gamma w_k\}>0,\\
F_k(\hat{\mathbf u}^{(j)})&=\min\{\hat{u}_k^{(j)},\gamma (\nabla g(\hat{\mathbf u}^{(j)}))_k+\gamma w_k\}>0.
\end{align*}
Therefore, we have
\begin{align*}\nonumber
F_k(\mathbf u^{(j)})^2-F_k(\hat{\mathbf u}^{(j)})^2=& \min\{(u_k^{(j)})^2,(\gamma (\nabla g(\mathbf u^{(j)}))_k+\gamma w_k)^2\}\\
&-\min\{(\hat{u}_k^{(j)})^2,(\gamma (\nabla g(\hat{\mathbf u}^{(j)}))_k+\gamma w_k)^2\}.
\end{align*}
In the case $k\in\mathcal I^\circ(\mathbf u^{(j)})$, we have $k\in\overline{\mathcal I^\circ}(\mathbf u^{(j)})$ or $k\in\mathcal I_-^\circ(\mathbf u^{(j)})$ because $F_k(\mathbf u^{(j)})>0$. With \eqref{eq:help3} and \eqref{eq:help6}, we have
\begin{align*}
F_k(\mathbf u^{(j)})^2-F_k(\hat{\mathbf u}^{(j)})^2&\ge (u_k^{(j)})^2-(\hat{u}_k^{(j)})^2=-2\beta^{l_j-1} u_k^{(j)} d_k^{(j)}-(\beta^{l_j-1}d_k^{(j)})^2\\
&\ge 2\beta^{l_j-1} (u_k^{(j)})^2-(\beta^{l_j-1}d_k^{(j)})^2.
\end{align*}
For $k\in\mathcal A^+(\mathbf u^{(j)})$, it follows $k\in\overline{\mathcal A^+}(\mathbf u^{(j)})$ because $F_k(\mathbf u^{(j)})>0$. Hence, one has with \eqref{eq:help1}
\begin{align*}
&F_k(\mathbf u^{(j)})^2-F_k(\hat{\mathbf u}^{(j)})^2\\
\ge &\ (\gamma(\nabla g(\mathbf u^{(j)}))_k+\gamma w_k)^2-(\gamma(\nabla g(\hat{\mathbf u}^{(j)}))_k+\gamma w_k)^2\\
= & -2\beta^{l_j-1} (\gamma(\nabla g(\mathbf u^{(j)}))_k+\gamma w_k) \gamma(\nabla ^2 g(\mathbf u^{(j)})\mathbf d^{(j)})_k+ o(\|\beta^{l_j-1}\mathbf d^{(j)}\|_2)\\
= &\ 2\beta^{l_j-1} F_k(\mathbf u^{(j)})^2+o(\beta^{l_j-1}\|\mathbf d^{(j)}\|_2),\quad j\to\infty, j\in J.
\end{align*}
If $k\in\mathcal I^+(\mathbf u^{(j)})$, we have $F_k(\mathbf u^{(j)})=u_k^{(j)}=\gamma(\nabla g(\mathbf u^{(j)}))_k+\gamma w_k$ and we have either $d_k^{(j)}=-u_k^{(j)}$ or $\gamma(\nabla^2 g(\mathbf u^{(j)})\mathbf d^{(j)})_k=-(\gamma(\nabla g(\mathbf u^{(j)}))_k+\gamma w_k)$, see \eqref{eq:LCP} and \eqref{eq:dx}. As in the cases $k\in\mathcal I^\circ (\mathbf u^{(j)})$ and $k\in\mathcal A^+(\mathbf u^{(j)})$, we conclude
\begin{align*}
F_k(\mathbf u^{(j)})^2-F_k(\hat{\mathbf u}^{(j)})^2\ge 2\beta^{l_j-1} F_k(\mathbf u^{(j)})^2+o(\beta^{l_j-1}\|\mathbf d^{(k)}\|_2),\quad j\to\infty, j\in J. 
\end{align*}
Altogether, we get
\begin{align*}
&\sum\limits_{k\in S_1(\mathbf u^*)} \big(F_k(\mathbf u^{(j)})^2-F_k(\hat{\mathbf u}^{(j)})^2\big)\\
\ge& \ 2 \beta^{l_j-1} \sum\limits_{k\in S_1(\mathbf u^*)} F_k(\mathbf u^{(j)})^2+o(\beta^{l_j-1}\|\mathbf d^{(j)}\|_2),\quad j\to\infty, j\in J.
\end{align*}
For $k\in S_2(\mathbf u^*)$, we have with Lipschitz-constant $L$ of $F_k$
\begin{align*}
F_k(\mathbf u^{(j)})^2-F_k(\hat{\mathbf u}^{(j)})^2&=\big(F_k(\mathbf u^{(j)})-F_k(\hat{\mathbf u}^{(j)})\big)\big(F_k(\mathbf u^{(j)})+F_k(\hat{\mathbf u}^{(j)})\big)\\
&\le L\|\hat{\mathbf u}^{(j)}-\mathbf u^{(j)}\|_2 \big|F_k(\mathbf u^{(j)})+F_k(\hat{\mathbf u}^{(j)})\big|\\
&=L\beta^{l_j-1}\|\mathbf d^{(j)}\|_2 \big|F_k(\mathbf u^{(j)})+F_k(\hat{\mathbf u}^{(j)})\big|.
\end{align*}
It follows
\begin{align*}
\lim\limits_{j\to\infty, j\in J} \sum\limits_{k\in S_2(\mathbf u^*)} \frac{F_k(\mathbf u^{(j)})^2-F_k(\hat{\mathbf u}^{(j)})^2}{\beta^{l_j-1}}=0.
\end{align*}
Let now $k\in S_3(\mathbf u^*)$. We may assume $u_k^{(j)}<0$, $\hat{u}_k^{(j)}<0$, $\gamma(\nabla g(\mathbf u^{(j)}))_k+\gamma w_k<0$ and $\gamma(\nabla g(\hat{\mathbf u}^{(j)}))_k+\gamma w_k<0$. With \eqref{eq:Fmin}, one has
\begin{align*}
F_k(\mathbf u^{(j)})^2-F_k(\hat{\mathbf u}^{(j)})^2=& \max\{(u_k^{(j)})^2, (\gamma(\nabla g(\mathbf u^{(j)}))_k+\gamma w_k)^2\}\\
&-\max\{(\hat{u}_k^{(j)})^2, (\gamma(\nabla g(\hat{\mathbf u}^{(j)}))_k+\gamma w_k)^2\}.
\end{align*}
First, we treat the case $(\hat{u}_k^{(j)})^2<(\gamma(\nabla g(\hat{\mathbf u}^{(j)}))_k+\gamma w_k)^2$. We have to consider the cases $k\in\mathcal I^\circ_+(\mathbf u^{(j)})$, $k\in\mathcal A_+^+(\mathbf u^{(j)})$ and $k\in\{k\in\mathcal I^+(\mathbf u^{(j)}): F_k(\mathbf u^{(j)})<0)\}$. With \eqref{eq:help4}, we have
\begin{align*}
&F_k(\mathbf u^{(j)})^2-F_k(\hat{\mathbf u}^{(j)})^2\\
\ge&\ (\gamma(\nabla g(\mathbf u^{(j)}))_k+\gamma w_k)^2-(\gamma(\nabla g(\hat{\mathbf u}^{(j)}))_k+\gamma w_k)^2\\
=&\ -2\beta^{l_j-1}(\gamma(\nabla g(\mathbf u^{(j)}))_k+\gamma w_k)\gamma(\nabla^2 g(\mathbf u^{(j)})\mathbf d^{(j)})_k+o(\beta^{l_j-1}\|\mathbf d^{(j)}\|_2)\\
\ge &\ 2\beta^{l_j-1}(\gamma(\nabla g(\mathbf u^{(j)}))_k+\gamma w_k)^2 +o(\beta^{l_j-1}\|\mathbf d^{(j)}\|_2),\quad j\to\infty, j\in J.
\end{align*}
Second, we consider the case $(\hat{u}_k^{(j)})^2\ge (\gamma(\nabla g(\hat{\mathbf u}^{(j)}))_k+\gamma w_k)^2$. With \eqref{eq:help6}, we have analogously
\begin{align*}
F_k(\mathbf u^{(j)})^2-F_k(\hat{\mathbf u}^{(j)})^2&\ge (u_k^{(j)})^2-(\hat{u}_k^{(j)})^2=-2\beta^{l_j-1}u_k^{(j)}d_k^{(j)}-(\beta^{l_j-1}d_k^{(j)})^2\\
&\ge 2\beta^{l_j-1}(u_k^{(j)})^2-(\beta^{l_j-1}d_k^{(j)})^2.
\end{align*}
Altogether, we obtain
\begin{align*}
&\sum\limits_{k\in S_3(\mathbf u^*)} \big(F_k(\mathbf u^{(j)})^2-F_k(\hat{\mathbf u}^{(j)})^2\big)\\
\ge &\ 2\beta^{l_j-1} \sum\limits_{k\in S_3(\mathbf u^*)} \min\{(u_k^{(j)})^2,(\gamma(\nabla g(\mathbf u^{(j)}))_k+\gamma w_k)^2\}\\
& +o(\beta^{l_j-1}\|\mathbf d^{(j)}\|_2), j\to\infty, j\in J.
\end{align*}
By symmetry, we can treat the sum $\tilde T_4$ similarly. For $j\to\infty, j\in J$, we get
\begin{align*}
&\sum\limits_{\{k\in \mathcal I^-(\mathbf u^*): u_k^*\neq 0\}}\big(F_k(\mathbf u^{(j)})^2-F_k(\hat{\mathbf u}^{(j)})^2\big) \\
\ge &\  2\beta^{l_j-1}\sum\limits_{\{k\in \mathcal I^-(\mathbf u^*): u_k^*\neq 0\}}\min\{(u_k^{(j)})^2,(\gamma(\nabla g(\mathbf u^{(j)}))_k-\gamma w_k)^2\}\\
&+o(\beta^{l_j-1}\|\mathbf d^{(j)}\|_2),
\end{align*}
and 
\begin{align*}
\lim\limits_{j\to\infty, j\in J} \sum\limits_{\{k\in \mathcal I^+(\mathbf u^*): u_k^*=0\}} \frac{F_k(\mathbf u^{(j)})^2-F_k(\hat{\mathbf u}^{(j)})^2}{\beta^{l_j-1}}=0.
\end{align*}

Finally, we divide both sides of the inequality \eqref{eq:convergenceproof1} by $\beta^{l_j-1}$ and take the limit $j\to\infty$, $j\in J$, obtaining with \eqref{eq:convergenceproof2} and the previous estimates
\begin{align*}
2\Theta(\mathbf u^*)\le 2\sigma \Theta(\mathbf u^*).
\end{align*}
Here, we use the fact that the sequence $\{\mathbf d^{(j)}\}_j$ is bounded, implying
\begin{align*}
\lim\limits_{j\to\infty, j\in J}\frac{o(\beta^{l_j-1}\|\mathbf d^{(j)}\|_2)}{\beta^{l_j-1}}=\lim\limits_{j\to\infty, j\in J}\frac{o(\beta^{l_j-1}\|\mathbf d^{(j)}\|_2)}{\beta^{l_j-1}\|\mathbf d^{(j)}\|_2}\|\mathbf d^{(j)}\|_2=0.
\end{align*}
The choice $\sigma<1/2$ implies $\Theta(\mathbf u^*)=0$, finishing the proof.
\end{proof}

As a consequence of the last theorem, we can argue that the stepsizes in Algorithm \texttt{modBSSN} are eventually chosen equal to $1$. In the following theorem, we additionally assume that $g$ is more regular and that $\mathbf F$ is smooth at the unique zero $\mathbf u^*$, i.e.\ $\mathcal I^+(\mathbf u^*)\cup\mathcal I^-(\mathbf u^*)=\emptyset$. 

\begin{theorem}\label{th:tequalto1}
Let $g$ be three times continuously differentiable. Let $\{\mathbf u^{(j)}\}_j$ be a sequence produced by Algorithm $\mathtt{modBSSN}$ converging to a limit point $\mathbf u^*$ with $\mathcal I^+(\mathbf u^*)\cup\mathcal I^-(\mathbf u^*)=\emptyset$. Then, there exists an index $j_0\in\mathbb N$ such that $t_j=1$ for all $j\ge j_0$.
\end{theorem}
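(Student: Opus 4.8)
The plan is to exploit the fact that, near a smooth zero of $\mathbf F$, Algorithm \texttt{modBSSN} reduces to the classical full Newton iteration, for which the quadratic residual decay forces acceptance of the unit step in the Armijo test \eqref{eq:Armijoinequality}. First I would localize. By Theorem~\ref{th:convergence} the limit point satisfies $\Theta(\mathbf u^*)=0$, hence $\mathbf F(\mathbf u^*)=\mathbf 0$. Since $\mathcal I^+(\mathbf u^*)\cup\mathcal I^-(\mathbf u^*)=\emptyset$, every index lies strictly in one of $\mathcal A^+(\mathbf u^*)$, $\mathcal A^-(\mathbf u^*)$, $\mathcal I^\circ(\mathbf u^*)$; because these sets are cut out by strict inequalities in the continuous quantities $u_k$ and $\gamma(\nabla g(\mathbf u))_k\pm\gamma w_k$, the partition is locally constant on some neighborhood $U$ of $\mathbf u^*$. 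On $U$ the components of $\mathbf F$ are given by the single smooth formulas $\gamma(\nabla g)_k\pm\gamma w_k$ on $\mathcal A^\pm$ and $u_k$ on $\mathcal I^\circ$, so $g\in C^3$ yields $\mathbf F\in C^2(U)$ and in particular a locally Lipschitz Jacobian $\mathbf F'(\mathbf u)$.

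Next I would identify the Newton matrix. By Remark~\ref{remark:indexsets} the modified index sets coincide with the original ones on $U$, the subsets \eqref{eq:A++}--\eqref{eq:I0-} are empty, and the linear complementarity problem \eqref{eq:LCP} is void, so that $\overline{\mathcal A}=\mathcal A$, $\overline{\mathcal I^\circ}=\mathcal I^\circ$ and $\overline{\mathcal I^+}=\overline{\mathcal I^-}=\emptyset$. Then $\mathbf G(\mathbf u)$ from \eqref{eq:G} becomes the block matrix with rows $\gamma(\nabla^2 g(\mathbf u))_{k,:}$ for $k\in\mathcal A$ and $e_k^\top$ for $k\in\mathcal I^\circ$, which is exactly the Fréchet derivative $\mathbf F'(\mathbf u)$. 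Its block-triangular form gives $\det\mathbf F'(\mathbf u)=\det(\gamma(\nabla^2g(\mathbf u))_{\mathcal A,\mathcal A})\ne 0$, since principal submatrices of the positive definite Hessian are positive definite; hence $\mathbf F'(\mathbf u)^{-1}$ exists and is uniformly bounded on $U$ after shrinking $U$.

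Then I would quantify the residual. On $U$, $\mathbf d^{(j)}$ solves the classical Newton equation $\mathbf F'(\mathbf u^{(j)})\mathbf d^{(j)}=-\mathbf F(\mathbf u^{(j)})$, so a Taylor expansion with Lipschitz Jacobian gives $\mathbf F(\mathbf u^{(j)}+\mathbf d^{(j)})=\mathbf F(\mathbf u^{(j)})+\mathbf F'(\mathbf u^{(j)})\mathbf d^{(j)}+O(\|\mathbf d^{(j)}\|_2^2)=O(\|\mathbf d^{(j)}\|_2^2)$. Because $\mathbf u^{(j)}\to\mathbf u^*$ and $\|\mathbf d^{(j)}\|_2\le C\|\mathbf F(\mathbf u^{(j)})\|_2\to 0$ by Lemma~\ref{lemma:dbounded}, both $\mathbf u^{(j)}$ and $\mathbf u^{(j)}+\mathbf d^{(j)}$ lie in $U$ for $j$ large. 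Combining the residual bound with Lemma~\ref{lemma:dbounded} yields $\Theta(\mathbf u^{(j)}+\mathbf d^{(j)})=\|\mathbf F(\mathbf u^{(j)}+\mathbf d^{(j)})\|_2^2=O(\|\mathbf d^{(j)}\|_2^4)\le \tilde C\,\Theta(\mathbf u^{(j)})^2$ for a constant $\tilde C$.

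Finally I would close the argument. By Theorem~\ref{th:convergence} and continuity, $\Theta(\mathbf u^{(j)})\to 0$, so there is $j_0$ with $\tilde C\,\Theta(\mathbf u^{(j)})\le 1-2\sigma$ for all $j\ge j_0$; then $\Theta(\mathbf u^{(j)}+\mathbf d^{(j)})\le\tilde C\,\Theta(\mathbf u^{(j)})\cdot\Theta(\mathbf u^{(j)})\le(1-2\sigma)\Theta(\mathbf u^{(j)})$, which is exactly \eqref{eq:Armijoinequality} at $l=0$, so the inner loop accepts $t_j=1$ for every $j\ge j_0$. I expect the main obstacle to be the bookkeeping in the first two steps, namely rigorously establishing that the index partition is locally constant and that the modified generalized derivative $\mathbf G$ genuinely collapses to the true Jacobian $\mathbf F'$ on $U$; once that identification is secured, everything else is the standard full-step Newton/Armijo estimate.
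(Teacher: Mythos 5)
Your proof is correct, but it takes a genuinely different route from the paper's. The paper argues by contradiction, following Pang: it assumes a subsequence along which the unit-step Armijo inequality fails, splits $\Theta(\mathbf u^{(j)})-\Theta(\mathbf u^{(j)}+\mathbf d^{(j)})$ into the sums $\hat T_1$, $\hat T_2$ over $\mathcal A(\mathbf u^*)$ and $\mathcal I^\circ(\mathbf u^*)$, Taylor-expands $F_k^2$ to second order (which is where the third derivatives of $g$ enter), normalizes by $\|\mathbf F(\mathbf u^{(j)})\|_2^2$, extracts limits $\tilde{\mathbf d}$ and $\tilde{\mathbf F}$ of the rescaled directions and residuals, and arrives at $\|\tilde{\mathbf F}\|_2^2\le 2\sigma\|\tilde{\mathbf F}\|_2^2$, contradicting $\sigma<1/2$ and $\|\tilde{\mathbf F}\|_2=1$. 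You instead observe --- consistently with Remark \ref{remark:indexsets} --- that near a smooth zero the subsets \eqref{eq:A++}--\eqref{eq:I0-} vanish, the modified index sets collapse to the original ones, the linear complementarity problem \eqref{eq:LCP} is void, and $\mathbf G(\mathbf u)$ from \eqref{eq:G} is exactly the Fr\'echet derivative $\mathbf F'(\mathbf u)$, so the iteration is a plain smooth Newton method on that neighborhood; the standard quadratic residual estimate $\Theta(\mathbf u^{(j)}+\mathbf d^{(j)})\le\tilde C\,\Theta(\mathbf u^{(j)})^2$ then yields \eqref{eq:Armijoinequality} with $l=0$ as soon as $\tilde C\,\Theta(\mathbf u^{(j)})\le 1-2\sigma$. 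Your argument is more elementary and direct, and in fact needs only the Lipschitz continuity of $\nabla^2 g$ guaranteed by Assumption \ref{ass:1} rather than $g\in C^3$; its one delicate point --- that the index partition is locally constant and that $\mathbf G$ really coincides with $\mathbf F'$ there, which the paper explicitly warns is false away from the zero --- is precisely the content of Remark \ref{remark:indexsets} and of the persistence of the strict inequalities defining $\mathcal A^\pm$ and $\mathcal I^\circ$, so it is secured. What the paper's heavier contradiction argument buys is structural uniformity: it reuses verbatim the decomposition, the identities \eqref{eq:help1}--\eqref{eq:help6}, and the limiting technique of the global convergence proof (Theorem \ref{th:convergence}), staying within the template of Pang's Theorem 2 rather than appealing to the classical smooth Newton--Armijo estimate.
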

\begin{proof}
We proceed as in the proof of \cite[Theorem 2]{Pa91}. Inspired by loc.\ cit., we show that for all $j$ large enough, we have
\begin{equation}
\Theta(\mathbf u^{(j)})-\Theta(\mathbf u^{(j)}+\mathbf d^{(j)})\ge 2\sigma \Theta(\mathbf u^{(j)}).
\end{equation}
We show the claim by contradiction. Let the subsequence $\{\mathbf u^{(j)}\}_{j\in J}$ fulfill
\begin{equation}\label{eq:Armijot1}
\Theta(\mathbf u^{(j)})-\Theta(\mathbf u^{(j)}+\mathbf d^{(j)})< 2\sigma \Theta(\mathbf u^{(j)})
\end{equation}
for all $j\in J$ large enough. Because of Lemma \ref{lemma:dbounded}, we have $\|\mathbf d^{(j)}\|_2\le C\|\mathbf F(\mathbf u^{(j)})\|_2$ with a constant $C>0$. Therefore, with $\hat{\mathbf u}^{(j)}:=\mathbf u^{(j)}+\mathbf d^{(j)}$, the sequence $\{\hat{\mathbf u}^{(j)}\}_{j\in J}$ has the limit $\mathbf u^*$. We consider
\begin{equation}
\Theta(\mathbf u^{(j)})-\Theta(\hat{\mathbf u}^{(j)})=\sum\limits_{i=1}^2 \hat{T}_i,
\end{equation}
where 
\begin{align*}
\hat{T}_1&:=\sum\limits_{k\in\mathcal A(\mathbf u^*)} \big(F_k(\mathbf u^{(j)})^2-F_k(\hat{\mathbf u}^{(j)})^2\big),\\
\hat{T}_2&:=\sum\limits_{k\in\mathcal I^\circ(\mathbf u^*)}\big( F_k(\mathbf u^{(j)})^2-F_k(\hat{\mathbf u}^{(j)})^2\big).
\end{align*}
Because of Theorem \ref{th:convergence}, we have
\begin{align*}
\mathcal A^+(\mathbf u^*)&=\{k: 0=\gamma(\nabla g(\mathbf u^*))_k+\gamma w_k<u_k^*\},\\
\mathcal A^-(\mathbf u^*)&=\{k: 0=\gamma(\nabla g(\mathbf u^*))_k-\gamma w_k>u_k^*\},\\
\mathcal I^\circ(\mathbf u^*)&=\{k: \gamma(\nabla g(\mathbf u^*))_k-\gamma w_k<u_k^*=0<\gamma(\nabla g(\mathbf u^*))_k+\gamma w_k\}.
\end{align*}
For all $j\in J$ large enough, we have
\begin{align*}
\mathcal A^+(\mathbf u^*)&\subset\overline{\mathcal A^+}(\mathbf u^{(j)})\cap\overline{\mathcal A^+}(\hat{\mathbf u}^{(j)}),\\
\mathcal A^-(\mathbf u^*)&\subset\overline{\mathcal A^-}(\mathbf u^{(j)})\cap\overline{\mathcal A^-}(\hat{\mathbf u}^{(j)}),\\
\mathcal I^\circ(\mathbf u^*)&\subset\overline{\mathcal I^\circ}(\mathbf u^{(j)})\cap\overline{\mathcal I^\circ}(\hat{\mathbf u}^{(j)}).
\end{align*}
Lemma \ref{lemma:dbounded} implies the boundedness of the subsequence $\{\mathbf d^{(j)}/\|\mathbf F(\mathbf u^{(j)})\|_2\}_{j\in J}$  of quotients and without loss of generality, this subsequence has a limit $\tilde{\mathbf d}\in\mathbb R^n$ and the subsequence $\{\mathbf F(\mathbf u^{(j)})/\|\mathbf F(\mathbf u^{(j)})\|_2\}_{j\in J}$ of unit vectors tends to a unit vector $\tilde{\mathbf F}\in\mathbb R^n$.

Similar to the proof of Theorem \ref{th:convergence}, we estimate the sums $\hat T_1$ and $\hat T_2$. First, we treat the sum $\hat{T}_1$. Because $k\in\mathcal A(\mathbf u^*)\subset \overline{\mathcal A}(\mathbf u^{(j)})\cap\overline{\mathcal A}(\hat{\mathbf u}^{(j)})$, we have $F_k(\mathbf u^{(j)})+\gamma(\nabla^2 g(\mathbf u^{(j)})\mathbf d^{(j)})_k=0$. Dividing by $\|\mathbf F(\mathbf u^{(j)})\|_2$ and taking the limit $j\to\infty, j\in J$, it follows
\begin{align*}
\tilde{\mathbf F}_k+\gamma(\nabla^2 g(\mathbf u^*)\tilde{\mathbf d})_k= 0.
\end{align*}
There exists a vector $\mathbf v$ on the line segment between $\mathbf u^{(j)}$ and $\hat{\mathbf u}^{(j)}$ with
\begin{align*}
&\hat{T}_1=\sum\limits_{k\in\mathcal A(\mathbf u^*)}\big( F_k(\mathbf u^{(j)})^2-F_k(\hat{\mathbf u}^{(j)})^2\big)\\
=&\sum\limits_{k\in\mathcal A(\mathbf u^*)}\Big(-2F_k(\mathbf u^{(j)})(\gamma\nabla^2 g(\mathbf u^{(j)})\mathbf d^{(j)})_k-(\gamma\nabla^2 g(\mathbf v)\mathbf d^{(j)})_k^2\\
& \phantom{= \sum\sum} -F_k(\mathbf v)\gamma\sum\limits_{l,m=1}^n \frac{\partial^3 g(\mathbf v)}{\partial u_l\partial u_m\partial u_k} d_l^{(j)} d_m^{(j)}\Big)\\
=&\sum\limits_{k\in\mathcal A(\mathbf u^*)}\Big(2F_k(\mathbf u^{(j)})^2-(\gamma\nabla^2 g(\mathbf v)\mathbf d^{(j)})_k^2-F_k(\mathbf v)\gamma\sum\limits_{l,m=1}^n \frac{\partial^3 g(\mathbf v)}{\partial u_l\partial u_m\partial u_k} d_l^{(j)} d_m^{(j)}\Big).
\end{align*}
Dividing by $\|\mathbf F(\mathbf u^{(j)})\|_2^2$ and taking the limit $j\to\infty$, $j\in J$, it follows
\begin{align*}
\lim\limits_{j\to\infty, j\in J}\frac{\hat{T}_1}{\|\mathbf F(\mathbf u^{(j)})\|_2^2}=\sum\limits_{k\in\mathcal A(\mathbf u^*)} \tilde{F}_k^2.
\end{align*}
Now we consider the sum $\hat T_2$. We have $k\in\mathcal I^\circ(\mathbf u^*)\subset \overline{\mathcal I^\circ}(\mathbf u^{(j)})\cap\overline{\mathcal I^\circ}(\hat{\mathbf u}^{(j)})$ and
\begin{align*}
\hat{T}_2&=\sum\limits_{k\in\mathcal I^\circ(\mathbf u^*)}\big( F_k(\mathbf u^{(j)})^2-F_k(\hat{\mathbf u}^{(j)})^2\big)=\sum\limits_{k\in\mathcal I^\circ(\mathbf u^*)}\big( (u_k^{(j)})^2-(\hat{u}_k^{(j)})^2\big)\\
&=\sum\limits_{k\in\mathcal I^\circ(\mathbf u^*)}F_k(\mathbf u^{(j)})^2.
\end{align*}
Therefore, we have
\begin{align*}
\lim\limits_{j\to\infty, j\in J}\frac{\hat{T}_2}{\|\mathbf F(\mathbf u^{(j)})\|_2^2}=\sum\limits_{k\in\mathcal I^\circ(\mathbf u^*)} \tilde{F}_k^2.
\end{align*}

Finally, we divide both sides of the inequality \eqref{eq:Armijot1} by $\|\mathbf F(\mathbf u^{(j)})\|_2^2$ and take the limit $j\to\infty$, $j\in J$, obtaining
\begin{align*}
\|\tilde{\mathbf F}\|_2^2 \le 2\sigma \|\tilde{\mathbf F}\|_2^2
\end{align*}
which is a contradiction to $\|\tilde{\mathbf F}\|_2=1$ and the choice $\sigma<1/2$ in the Armijo rule \eqref{eq:Armijoinequality}, finishing the proof.
\end{proof}

Now we consider the locally quadratic convergence of Algorithm \texttt{modBSSN} in the case that the stepsizes $t_j$ are eventually chosen equal to $1$, i.e.\ according to Theorem \ref{th:tequalto1} especially in the case $\mathcal I^+(\mathbf u^*)\cup\mathcal I^-(\mathbf u^*)=\emptyset$. In the following theorem, we need the bounded invertibility of $\mathbf G(\mathbf u)$ from \eqref{eq:G} in a neighborhood of the zero $\mathbf u^*$ of $\mathbf F$. Because $\mathbf M:=\nabla^2 g(\mathbf u)$ is symmetric and positive definite, the inverse of $\mathbf G$ at $\mathbf u$ is bounded by a constant $\tilde C>0$
\begin{equation}\label{eq:estimateinverse}
\|\mathbf G(\mathbf u)^{-1}\|_2\le \|\mathbf M_{\mathcal B,\mathcal B}^{-1}\|_2\Big(\frac{1}{\gamma}+\|\mathbf M_{\mathcal B,\mathcal C}\|_2\Big)+1\le \|\mathbf M^{-1}\|_2\Big(\frac{1}{\gamma}+\|\mathbf M\|_2\Big)+1\le\tilde C,
\end{equation}
see \cite[Proposition 3.11]{GrLo08} and \cite[Lemma 3.6]{MuHaMaPi13}. The boundedness follows from Assumption \ref{ass:1}. For the following theorem, we need again the additional assumption that $g$ is three times continuously differentiable.

\begin{theorem}\label{th:quadraticconvergence}
Let $g$ be three times continuously differentiable and let the stepsizes $t_j$ be chosen equal to $1$ for all $j$ large enough. Let $\{\mathbf u^{(j)}\}_j$ be a sequence produced by Algorithm $\mathtt{modBSSN}$ converging to $\mathbf u^*$. Then, there exists a constant $C>0$ so that locally quadratic convergence is achieved, i.e.\ for all $j$ large enough, we have
\begin{align*}
\|\mathbf u^{(j+1)}-\mathbf u^*\|_2\le C\|\mathbf u^{(j)}-\mathbf u^*\|_2^2.
\end{align*}
\end{theorem}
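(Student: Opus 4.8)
The plan is to reduce the statement to the classical error identity for a full Newton step and then to a single second-order consistency estimate. For $j$ large enough we have $t_j=1$, so \eqref{eq:generalizedNewtoneq}--\eqref{eq:Newtonupdate} give $\mathbf u^{(j+1)}=\mathbf u^{(j)}-\mathbf G(\mathbf u^{(j)})^{-1}\mathbf F(\mathbf u^{(j)})$. Since $\mathbf F(\mathbf u^*)=\mathbf 0$, subtracting $\mathbf u^*$ and factoring out $\mathbf G(\mathbf u^{(j)})^{-1}$ yields
\begin{equation}\nonumber
\mathbf u^{(j+1)}-\mathbf u^*=-\mathbf G(\mathbf u^{(j)})^{-1}\big(\mathbf F(\mathbf u^{(j)})-\mathbf F(\mathbf u^*)-\mathbf G(\mathbf u^{(j)})(\mathbf u^{(j)}-\mathbf u^*)\big).
\end{equation}
Taking norms, the proof then splits into two independent bounds: the uniform invertibility $\|\mathbf G(\mathbf u^{(j)})^{-1}\|_2\le\tilde C$, which is already available from \eqref{eq:estimateinverse} and Assumption \ref{ass:1}, and the consistency estimate
\begin{equation}\nonumber
\|\mathbf F(\mathbf u^{(j)})-\mathbf F(\mathbf u^*)-\mathbf G(\mathbf u^{(j)})(\mathbf u^{(j)}-\mathbf u^*)\|_2\le C'\|\mathbf u^{(j)}-\mathbf u^*\|_2^2,
\end{equation}
whose product gives the asserted quadratic contraction with $C=\tilde C C'$.

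The core of the argument, and the main obstacle, is the consistency estimate, because $\mathbf G$ from \eqref{eq:G} is in general not a slanting function of $\mathbf F$ and thus cannot be treated by a generic strong-semismoothness bound. I would exploit the fact that, in the regime covered by Theorem \ref{th:tequalto1} in which the stepsizes are eventually $1$, namely $\mathcal I^+(\mathbf u^*)\cup\mathcal I^-(\mathbf u^*)=\emptyset$, the active and inactive sets stabilize. Concretely, by continuity of $\nabla g$ and $\nabla^2 g$ together with the strict separation encoded in $\mathcal I^\pm(\mathbf u^*)=\emptyset$, there is a neighborhood of $\mathbf u^*$ on which $\mathcal I^+=\mathcal I^-=\emptyset$, on which the subsets \eqref{eq:A++}--\eqref{eq:I0-} are empty, and on which the partition into $\mathcal A^+$, $\mathcal A^-$, $\mathcal I^\circ$ is locally constant (cf.\ Remark \ref{remark:indexsets}). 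On this neighborhood one has $\overline{\mathcal I^+}=\overline{\mathcal I^-}=\emptyset$, hence $\mathcal B=\overline{\mathcal A}=\mathcal A$ and $\mathcal C=\overline{\mathcal I^\circ}=\mathcal I^\circ$, and comparing \eqref{eq:G} with the componentwise form \eqref{eq:Fmin} of $\mathbf F$ shows that $\mathbf G(\mathbf u^{(j)})$ coincides with the Fr\'echet derivative $\mathbf F'(\mathbf u^{(j)})$: on $\overline{\mathcal A}$ one differentiates $\gamma(\nabla g)_k\pm\gamma w_k$, reproducing the blocks $\gamma(\nabla^2 g)_{\overline{\mathcal A},\overline{\mathcal A}}$ and $\gamma(\nabla^2 g)_{\overline{\mathcal A},\overline{\mathcal I^\circ}}$, while on $\overline{\mathcal I^\circ}$ the component is $F_k(\mathbf u)=u_k$, reproducing the rows $\mathbf 0_{\overline{\mathcal I^\circ},\overline{\mathcal A}}$ and $\mathbf I_{\overline{\mathcal I^\circ},\overline{\mathcal I^\circ}}$.

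Once $\mathbf G(\mathbf u^{(j)})=\mathbf F'(\mathbf u^{(j)})$ is established, the consistency estimate becomes the standard Taylor remainder bound for a twice continuously differentiable map. Since $g$ is three times continuously differentiable, $\mathbf F$ is $C^2$ on the above neighborhood with second derivatives locally bounded in terms of $\nabla^2 g$ and $\nabla^3 g$; the integral (or mean-value) form of Taylor's theorem then yields $\|\mathbf F(\mathbf u^{(j)})-\mathbf F(\mathbf u^*)-\mathbf F'(\mathbf u^{(j)})(\mathbf u^{(j)}-\mathbf u^*)\|_2\le C'\|\mathbf u^{(j)}-\mathbf u^*\|_2^2$, with $C'$ controlled by $\sup\|\nabla^3 g\|$ near $\mathbf u^*$. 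Substituting this together with $\|\mathbf G(\mathbf u^{(j)})^{-1}\|_2\le\tilde C$ into the error identity closes the proof, following the line of \cite{Pa91}. The delicate points I would verify carefully are the simultaneous stabilization of all index sets (so that $\mathbf G=\mathbf F'$ holds for every sufficiently large $j$) and the uniformity of the constant $C'$ over the neighborhood, both of which reduce to the continuity of $\nabla g,\nabla^2 g,\nabla^3 g$ and the strict separation at $\mathbf u^*$.
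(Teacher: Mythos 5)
Your reduction to the error identity
\begin{equation*}
\mathbf u^{(j+1)}-\mathbf u^*=-\mathbf G(\mathbf u^{(j)})^{-1}\bigl(\mathbf F(\mathbf u^{(j)})-\mathbf F(\mathbf u^*)-\mathbf G(\mathbf u^{(j)})(\mathbf u^{(j)}-\mathbf u^*)\bigr)
\end{equation*}
together with the uniform bound \eqref{eq:estimateinverse} is exactly the right frame, and it matches the structure of the paper's argument. The problem is in how you establish the consistency estimate: you import the hypothesis $\mathcal I^+(\mathbf u^*)\cup\mathcal I^-(\mathbf u^*)=\emptyset$ from Theorem \ref{th:tequalto1}, but Theorem \ref{th:quadraticconvergence} does not assume it --- it only assumes that the stepsizes are eventually equal to $1$, for which $\mathcal I^\pm(\mathbf u^*)=\emptyset$ is a sufficient but not a necessary condition (the text before the theorem says ``especially in the case,'' not ``exactly in the case''). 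Under your extra assumption the argument is clean and correct: the partition stabilizes, $\mathbf G(\mathbf u^{(j)})$ coincides with the Fr\'echet derivative $\mathbf F'(\mathbf u^{(j)})$, and the classical Taylor remainder bound closes the proof. But this proves a strictly weaker statement than the one asserted.

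When $\mathcal I^+(\mathbf u^*)\cup\mathcal I^-(\mathbf u^*)\neq\emptyset$, the mechanism you rely on breaks down: $\mathbf F$ is not differentiable at $\mathbf u^*$, the index sets need not stabilize, and for an index $k\in\mathcal I^\pm(\mathbf u^*)$ the $k$-th row of $\mathbf G(\mathbf u^{(j)})$ depends on whether the LCP solution has $x_k>0$ (row goes to $\mathcal B$) or $x_k=0$ (row goes to $\mathcal C$), which can change from iteration to iteration. The paper's proof handles this by working with $\mathcal B(\mathbf u^{(j)})$, $\mathcal C(\mathbf u^{(j)})$ directly and establishing the sandwich inclusions $\mathcal A(\mathbf u^*)\subset\mathcal B(\mathbf u^{(j)})\subset\mathcal A(\mathbf u^*)\cup\mathcal I^+(\mathbf u^*)\cup\mathcal I^-(\mathbf u^*)$ and $\mathcal I^\circ(\mathbf u^*)\subset\mathcal C(\mathbf u^{(j)})\subset\mathcal I^\circ(\mathbf u^*)\cup\mathcal I^+(\mathbf u^*)\cup\mathcal I^-(\mathbf u^*)$; the point is that at an index $k\in\mathcal I^\pm(\mathbf u^*)$ \emph{both} branches of the min/max vanish at $\mathbf u^*$ (i.e.\ $u_k^*=0$ and $\gamma(\nabla g(\mathbf u^*))_k\pm\gamma w_k=0$), so that $\mathbf u^*_{\mathcal C(\mathbf u^{(j)})}=\mathbf 0$ and $\mathbf F(\mathbf u^*)_{\mathcal B(\mathbf u^{(j)})}=\mathbf 0$ hold no matter which side such an index lands on, and the second-order expansion goes through row by row. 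To repair your proof you would either need to add this case analysis for the indices in $\mathcal I^\pm(\mathbf u^*)$, or explicitly add $\mathcal I^+(\mathbf u^*)\cup\mathcal I^-(\mathbf u^*)=\emptyset$ to the hypotheses, which would weaken the theorem.
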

\begin{proof}
We follow the proof of \cite[Theorem 3]{Pa91}. By assumption, we have $t_j=1$, i.e.\ $\mathbf u^{(j+1)}=\mathbf u^{(j)}+\mathbf d^{(j)}$, for all $j$ large enough. With $\mathcal B(\mathbf u^{(j)})$, $\mathcal C(\mathbf u^{(j)})$ from \eqref{eq:BC}, we have
\begin{align*}
F_k(\mathbf u^{(j)})+\gamma(\nabla^2 g(\mathbf u^{(j)})\mathbf d^{(j)})_k=0,\qquad & \text{for}\ k\in\mathcal B(\mathbf u^{(j)}),\\
u_k^{(j+1)}=0,\qquad & \text{for}\ k\in\mathcal C(\mathbf u^{(j)}).
\end{align*}
Because $\mathbf u^*$ is the limit of $\{\mathbf u^{(j)}\}_j$, we have for $j$ large enough
\begin{align*}
\mathcal A(\mathbf u^*)&\subset \big( \{k: u_k^{(j)}>\gamma(\nabla g(\mathbf u^{(j)}))_k+\gamma w_k \wedge u_k^{(j)}>0\}\\
& \phantom{\subset}\ \cup \{k: u_k^{(j)}<\gamma(\nabla g(\mathbf u^{(j)}))_k-\gamma w_k \wedge u_k^{(j)}<0\} \big)\\
&\subset \mathcal B(\mathbf u^{(j)}).
\end{align*}
This yields the inclusion $\mathcal C(\mathbf u^{(j)})\subset \mathcal I^+(\mathbf u^*)\cup\mathcal I^-(\mathbf u^*)\cup\mathcal I^\circ (\mathbf u^*)$, implying $\mathbf u^*_{\mathcal C(\mathbf u^{(j)})}=\mathbf 0$.
Analogously, we have for $j$ large enough
\begin{align*}
& \mathcal I^\circ (\mathbf u^*)\\
\subset\ & \{k: |\mathbf u_k^ {(j)}-\gamma (\nabla g(\mathbf u^{(j)}))_k|<\gamma w_k \wedge \gamma (\nabla g(\mathbf u^{(j)}))_k+\gamma w_k>0 \}\\
& \cup\ \{ k: |\mathbf u_k^{(j)}-\gamma (\nabla g(\mathbf u^{(j)}))_k|<\gamma w_k  \wedge \gamma (\nabla g(\mathbf u^{(j)}))_k-\gamma w_k<0 \}\\
 \subset\ & \mathcal C(\mathbf u^{(j)}).
\end{align*}
Consequently, we have $\mathcal B(\mathbf u^{(j)})\subset \mathcal I^+(\mathbf u^*)\cup\mathcal I^-(\mathbf u^*)\cup\mathcal A(\mathbf u^*)$, implying $0=F_k(\mathbf u^*)=\gamma\nabla g(\mathbf u^*)_k\pm \gamma w_k$, respectively, for all $k\in\mathcal B(\mathbf u^{(j)})$.

Skipping the arguments $\mathcal B=\mathcal B(\mathbf u^{(j)})$, $\mathcal C=\mathcal C(\mathbf u^{(j)})$, we obtain with $\mathbf u^*_\mathcal C=\mathbf 0$, $\mathbf F(\mathbf u^*)_\mathcal B=\mathbf 0$ and the mean value theorem
\begin{align*}
&\begin{pmatrix}\big(\mathbf G(\mathbf u^{(j)})(\mathbf u^{(j+1)}-\mathbf u^*)\big)_{\mathcal B}\\ \big(\mathbf G(\mathbf u^{(j)})(\mathbf u^{(j+1)}-\mathbf u^*)\big)_{\mathcal C}\end{pmatrix}\\
=&\begin{pmatrix}(\nabla^2 g(\mathbf u^ {(j)}))_{\mathcal B,\mathcal B} & (\nabla^2 g(\mathbf u^ {(j)}))_{\mathcal B,\mathcal C}\\
\mathbf 0_{\mathcal C,\mathcal B} & \mathbf I_{\mathcal C,\mathcal C}\end{pmatrix}\begin{pmatrix}(\mathbf u^{(j+1)}-\mathbf u^{(j)}+\mathbf u^{(j)}-\mathbf u^*)_{\mathcal B}\\(\mathbf u^{(j+1)}-\mathbf u^{(j)}+\mathbf u^{(j)}-\mathbf u^*)_{\mathcal C}\end{pmatrix}\\
=& \begin{pmatrix}
-\mathbf F(\mathbf u^{(j)})_{\mathcal B}+\gamma \nabla^2 g(\mathbf u^{(j)})_{\mathcal B,\mathcal B}(\mathbf u^{(j)}-\mathbf u^*)_{\mathcal B}+\gamma \nabla^2 g(\mathbf u^{(j)})_{\mathcal B,\mathcal C}(\mathbf u^{(j)}-\mathbf u^*)_{\mathcal C}\\
-\mathbf u^{(j)}_{\mathcal C}+(\mathbf u^{(j)}-\mathbf u^*)_{\mathcal C}
\end{pmatrix}\\
&+\begin{pmatrix}
\mathbf F(\mathbf u^*)_{\mathcal B}\\{\mathbf u}^*_{\mathcal C}
\end{pmatrix}\\
=& \begin{pmatrix}
\Big(\sum\limits_{l,m=1}^{n}\gamma\frac{\partial^3 g(\mathbf v)}{\partial u_l\partial u_m\partial u_k}(u_l^*-u_l^{(j)})(u_m^*-u_m^{(j)})\Big)_{k\in\mathcal B}\\ \mathbf 0_{\mathcal C}
\end{pmatrix},
\end{align*}
where $\mathbf v$ is a vector on the line segment between $\mathbf u^{(j)}$ and $\mathbf u^*$.
For $j$ large enough, the matrix $\mathbf G(\mathbf u^{(j)})$ is boundedly invertible by Assumption \ref{ass:1}, cf. \eqref{eq:estimateinverse}. Therefore, there exists a constant $C>0$, depending only on $\mathbf u^*$, with
\begin{align*}
\|\mathbf u^{(j+1)}-\mathbf u^*\|_2\le C \|\mathbf u^{(j)}-\mathbf u^*\|_2^2,
\end{align*}
for all $j$ large enough, proving the claim.
 \end{proof}

Note that in case of a quadratic functional $g(\mathbf u)=\frac{1}{2}\|\mathbf K\mathbf u-\mathbf f\|_2^2$ with $\mathbf K$ injective, $\mathbf G(\mathbf u)^{-1}$ was shown to be uniformly bounded in a neighborhood of the zero $\mathbf u^*$ of $\mathbf F$ \cite{GrLo08}. Hence, in case of a quadratic functional $g$ with $\mathcal I^+(\mathbf u^*)\cup\mathcal I^-(\mathbf u^*)=\emptyset$, the stepsizes in Algorithm \texttt{modBSSN} are eventually chosen equal to $1$, locally quadratic convergence is achieved and $\mathbf u^*$ is found within finitely many steps, see also Remark \ref{remark:indexsets} and \cite{HaRa15}. For other functionals $g$, these conditions need to be verified. 

\subsection{Convergence of the B-semismooth Newton method}\label{sec:32}
 In this section, we consider Algorithm \texttt{BSSN}, i.e.\ the B-semismooth Newton method from \cite{HaRa15} generalized to the minimization problem \eqref{eq:min}, see Section \ref{sec:22}. We cite the convergence theorem from \cite[Theorem 4.8]{HaRa15}, see also \cite[Theorem 1]{HaPaRa92}.
\begin{theorem}\label{th:convergenceoldSSN}
Let Assumption \ref{ass:1} be fulfilled and let $\{\mathbf u^{(j)}\}_j$ be a sequence of iterates produced by Algorithm $\mathtt{BSSN}$ from Section \ref{sec:22}. 
Let $\{t_j\}_j$ be the chosen stepsizes. 
\begin{itemize}
\item [(i)] If $\lim\sup_{j\to\infty}t_j>0$, then $\mathbf u^{(j)}\to\mathbf u^*$, $j\to\infty$ with $\Theta(\mathbf u^*)=0$.
\item [(ii)] If $\lim\sup_{j\to\infty}t_j=0$ and if $\mathbf u^*$ is an accumulation point of $\{\mathbf u^{(j)}\}_j$, where condition \eqref{eq:convergenceconditionSSN} holds at $\mathbf u^*$, then $\mathbf u^{(j)}\to\mathbf u^*$, $j\to\infty$ with $\Theta(\mathbf u^*)=0$.
\end{itemize}
\end{theorem}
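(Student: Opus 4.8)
The plan is to adapt the globalization argument already used for Algorithm \texttt{modBSSN} in Theorem \ref{th:convergence}, exploiting that the \texttt{BSSN} directions solving \eqref{eq:BNewtonequation} satisfy the exact descent identity $\Theta'(\mathbf u^{(j)},\mathbf d^{(j)})=2\langle\mathbf F(\mathbf u^{(j)}),\mathbf F'(\mathbf u^{(j)},\mathbf d^{(j)})\rangle=-2\Theta(\mathbf u^{(j)})$, and that, by the \texttt{BSSN} analogue of Lemma \ref{lemma:dbounded}, $\|\mathbf d^{(j)}\|_2\le C\|\mathbf F(\mathbf u^{(j)})\|_2$. As in the proof of Theorem \ref{th:convergence}, the Armijo rule \eqref{eq:Armijoinequality} forces $\{\Theta(\mathbf u^{(j)})\}_j$ to decrease monotonically to a limit, whence $2\sigma t_j\Theta(\mathbf u^{(j)})\le\Theta(\mathbf u^{(j)})-\Theta(\mathbf u^{(j+1)})\to 0$ and therefore $t_j\Theta(\mathbf u^{(j)})\to 0$. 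Compactness of the level set $L_\Theta(\mathbf u^{(0)})$ from Assumption \ref{ass:1} guarantees that $\{\mathbf u^{(j)}\}_j$ stays bounded and admits accumulation points.

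For part (i), I would argue directly: along a subsequence $J$ with $\mathbf u^{(j)}\to\mathbf u^*$ and $\inf_{j\in J}t_j>0$, the relation $t_j\Theta(\mathbf u^{(j)})\to 0$ forces $\Theta(\mathbf u^{(j)})\to 0$ on $J$, and since $\{\Theta(\mathbf u^{(j)})\}_j$ converges as a whole, its limit is $0$. Continuity of $\Theta$ then gives $\Theta(\mathbf u^*)=0$. Because Assumption \ref{ass:1} makes $g$ strictly convex, \eqref{eq:min} has a unique minimizer, so $\mathbf F$ has a unique zero; every accumulation point therefore coincides with it, and a bounded sequence with a single accumulation point converges, yielding $\mathbf u^{(j)}\to\mathbf u^*$.

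For part (ii), where $t_j\to 0$, I would fix a subsequence $J$ with $\mathbf u^{(j)}\to\mathbf u^*$ and, passing to a further subsequence, $\mathbf d^{(j)}\to\mathbf d^*$ (possible by boundedness). Since $t_j=\beta^{l_j}\to 0$, the trial step of length $\beta^{l_j-1}$ was rejected, so $\hat{\mathbf u}^{(j)}:=\mathbf u^{(j)}+\beta^{l_j-1}\mathbf d^{(j)}\to\mathbf u^*$ as well and
\[
\frac{\Theta(\hat{\mathbf u}^{(j)})-\Theta(\mathbf u^{(j)})}{\beta^{l_j-1}}>-2\sigma\Theta(\mathbf u^{(j)}).
\]
Here the technical assumption \eqref{eq:convergenceconditionSSN} enters decisively: applying it with $\mathbf u=\hat{\mathbf u}^{(j)}$, $\mathbf v=\mathbf u^{(j)}$ and using positive homogeneity of $\Theta'(\mathbf u^*,\cdot)$, the left-hand quotient equals $\Theta'(\mathbf u^*,\mathbf d^{(j)})+o(1)$, where the error is controlled because $\|\mathbf d^{(j)}\|_2$ is bounded. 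Letting $j\to\infty$ along $J$ and using continuity of $\Theta'(\mathbf u^*,\cdot)$ in its direction argument yields $\Theta'(\mathbf u^*,\mathbf d^*)\ge-2\sigma\Theta(\mathbf u^*)$. Combining this with the limiting form of the descent identity, $\Theta'(\mathbf u^*,\mathbf d^*)\le-2\Theta(\mathbf u^*)$, gives $-2\Theta(\mathbf u^*)\ge-2\sigma\Theta(\mathbf u^*)$, which forces $\Theta(\mathbf u^*)=0$ since $\sigma<\frac12$. As in part (i), uniqueness of the zero and boundedness then promote this to $\mathbf u^{(j)}\to\mathbf u^*$.

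The hard part is the last step of part (ii), namely passing from the pointwise descent identity $\Theta'(\mathbf u^{(j)},\mathbf d^{(j)})=-2\Theta(\mathbf u^{(j)})$ to the inequality $\Theta'(\mathbf u^*,\mathbf d^*)\le-2\Theta(\mathbf u^*)$ at the accumulation point. Because $\mathbf F'(\mathbf u,\cdot)$ from Lemma \ref{lemma:dirderivative} is defined through the $\min$/$\max$ branches on $\mathcal I^+(\mathbf u)$ and $\mathcal I^-(\mathbf u)$, the map $(\mathbf u,\mathbf d)\mapsto\mathbf F'(\mathbf u,\mathbf d)$ is only one-sidedly semicontinuous at points where $\mathcal I^+(\mathbf u^*)\cup\mathcal I^-(\mathbf u^*)\neq\emptyset$: indices may migrate between $\mathcal A$, $\mathcal I^\circ$ and $\mathcal I^\pm$ along the sequence, and the sign of $F_k(\mathbf u^*)$ is a priori unknown, so the semicontinuity of $\mathbf F'$ does not transfer to $\Theta'$. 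This is exactly the non-smoothness that the modification of the index sets removes for \texttt{modBSSN}; for \texttt{BSSN} it cannot be removed, and assumption \eqref{eq:convergenceconditionSSN} is precisely the hypothesis that renders the limiting descent inequality valid. Accordingly I would follow \cite[Theorem 1]{HaPaRa92} and \cite[Theorem 4.8]{HaRa15}, treating \eqref{eq:convergenceconditionSSN} as the substitute for the joint continuity of $\Theta'$ that would otherwise be needed.
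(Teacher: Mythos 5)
Your proposal is correct and follows essentially the same route as the paper, whose own proof is only a one-line reference to \cite[Theorem 4.8]{HaRa15} (and \cite[Theorem 1]{HaPaRa92}): Armijo monotonicity of $\{\Theta(\mathbf u^{(j)})\}_j$ plus compact level sets, $t_j\Theta(\mathbf u^{(j)})\to 0$, the rejected-trial-step inequality combined with condition \eqref{eq:convergenceconditionSSN} and the boundedness $\|\mathbf d^{(j)}\|_2\le C\|\mathbf F(\mathbf u^{(j)})\|_2$, and uniqueness of the zero of $\mathbf F$ to upgrade subsequential to full convergence. The one point to sharpen is the role of \eqref{eq:convergenceconditionSSN}: it delivers only the lower bound $\Theta'(\mathbf u^*,\mathbf d^*)\ge -2\sigma\Theta(\mathbf u^*)$, while the complementary bound $\Theta'(\mathbf u^*,\mathbf d^*)\le -2\Theta(\mathbf u^*)$ --- which you correctly single out as the delicate step --- is not a consequence of \eqref{eq:convergenceconditionSSN} but must be obtained separately by passing the generalized Newton equation $\mathbf F'(\mathbf u^{(j)},\mathbf d^{(j)})=-\mathbf F(\mathbf u^{(j)})$ to the limit, as is done in the cited references.
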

\begin{proof}
The proof follows \cite[Proof of Theorem 4.8]{HaRa15} using Assumption \ref{ass:1}, Lemma \ref{lemma:dunique}, Lemma \ref{lemma:armijo} and Lemma \ref{lemma:dbounded}.
\end{proof}

Analogously to \cite[Corollary 4.10]{HaRa15}, we can deduce from \cite[Theorem 4.3, Corollary 4.4]{Qi93} that if the zero $\mathbf u^*$ of $\mathbf F$ is an accumulation point of a sequence $\{\mathbf u^{(j)}\}_j$ of iterates produced by Algorithm \texttt{BSSN}, the sequence $\{\mathbf u^{(j)}\}_j$ converges locally superlinearly to $\mathbf u^*$ and the stepsizes $t_k$ are eventually chosen equal to $1$. Nevertheless, the modification of the index sets is essential for the modified B-semismooth Newton method (Algorithm \texttt{modBSSN}) to overcome the theoretical drawback of the technical assumption \eqref{eq:convergenceconditionSSN} in Theorem \ref{th:convergenceoldSSN}, see Section \ref{sec:31}.

\subsection{Convergence of the hybrid method}
The global convergence and the local convergence speed of Algorithm \texttt{hybridBSSN} from Section \ref{sec:23} directly follow from Theorem \ref{th:convergence} and Theorem \ref{th:quadraticconvergence} resp.\ Section \ref{sec:32}. The method combines the efficiency of Algorithm \texttt{BSSN} and the stronger convergence properties of Algorithm \texttt{modBSSN}.

\section{Numerical results}\label{sec:numericalexperiments}
In this section, we present numerical experiments demonstrating our theoretical results. We first consider image deblurring for gray-scale images degraded by motion blur. This is a linear inverse problem and in the presence of noisy measurement data regularization is essential. Assuming that the image is sparse, i.e.\ it has only few nonzero pixels, we apply $\ell_1$-penalized Tikhonov regularization, compare \eqref{eq:min_quadraticg}. Here, Assumption \ref{ass:1} is fulfilled. Second, we consider a nonquadratic functional $g$ arising in robust linear regression. If data is degraded by outliers, instead of minimizing the ordinary least squares functional one may choose a more robust objective function, see e.g.\ \cite{Al11,ClWo15,Fu99}. Giving preference to simple models, we add a sparsity promoting penalty term as proposed in current research effecting that irrelevant coefficients are set equal to zero, see e.g.\ \cite{AlCrGe13,LiScRaCe15,Ti96} and the references therein. For the arising minimization problem \eqref{eq:min}, it is not ensured that all prior assumptions are fulfilled. Nevertheless, convincing numerical results are achieved.

For our numerical experiments, we use MATLAB$^\circledR$ 2015a and the computations are run on a desktop PC with Intel$^\circledR$ Xeon$^\circledR$ CPU  (W3530, 2.80 GHz). In Algorithm \texttt{modBSSN}, Algorithm \texttt{BSSN} and Algorithm \texttt{hybridBSSN}, see Algorithm \ref{algo1}, we choose the Armijo parameters $\sigma=0.01$ and $\beta=0.5$. The stopping criterion is a residual norm $\|\mathbf F(\mathbf u^{(j)})\|_2$ smaller than $10^{-7}$ in all computations. If not otherwise stated, the zero vector is chosen as starting vector. In Algorithm \texttt{hybridSSN}, we choose $j_{max}=250$ and $t_{min}=10^{-5}$.

The performance of Algorithm \texttt{modBSSN}, Algorithm \texttt{BSSN} and Algorithm \texttt{hybridBSSN} depends on the choice of the parameter $\gamma$ as well as, at least concerning Algorithm \texttt{modBSSN}, the particular solver for the linear complementarity problem \eqref{eq:LCP}. In our numerical experiments, the linear complementarity problem is solved with the modified damped Newton method from \cite{HaPa90}. This algorithm is a specialization of the method from \cite{Pa90} to linear complementarity problems. It was shown in \cite{FiKa96} that the method finds the true solution to the linear complementarity problem within finitely many iterations. The stopping criterion for an iterate $\tilde{\mathbf x}$ is here chosen as $\|\min\{\tilde{\mathbf x},\mathbf z+\mathbf N\tilde{\mathbf x}\}\|_2<10^{-7}$. If the starting vector $\mathbf x^{(0)}\in\mathbb R^{|\overline{\mathcal I^\pm}|}$ fulfills $(\mathbf z+\mathbf N\mathbf x^{(0)})_k\neq x_k^{(0)}$ for all $k$ where $\mathbf N$, $\mathbf z$ from \eqref{eq:BSSNN} resp.\ \eqref{eq:BSSNz}, which is the case if e.g.\ $\mathbf x^{(0)}:=\mathbf 0$ and if $z_k\neq 0$ for all $k$, the Newton method only poses one linear system per iteration \cite{HaPa90}. We choose $\mathbf x^{(0)}:=\mathbf 0$. If this condition is violated by the starting vector or if more than $50$ Newton steps are needed, we switch to an implementation\footnote[1]{The code is taken from \texttt{http://code.google.com/p/rpi-matlab-simulator/source/browse/}\\\texttt{simulator/engine/solvers/Lemke/lemke.m} (30 June 2015).} of Lemke's algorithm \cite{CoPaSt09,WiLuetal13}. The damped Newton method from \cite{HaPa90} is often faster than Lemke's method in terms of computational time, see also the numerical results in \cite{HaPa90}. We also tested an interior point method using the MATLAB function \texttt{quadprog} and an implementation\footnote[2]{The code is taken from \texttt{http://www.mathworks.com/matlabcentral/fileexchange/}\\ \texttt{20952-lcp---mcp-solver--newton-based-/content/LCP.m} (30 June 2015).} of the semismooth Newton-type method \cite{Fi95} based on a Fischer-Burmeister reformulation of the linear complementarity problem as well as the PATH solver\footnote[3]{The code is taken from \texttt{http://pages.cs.wisc.edu/$\sim$ferris/path.html} (08 February 2016).} from \cite{DiFe95,FeMu99}. We decided to solve the linear complementarity problem up to machine precision because its inexact solution may cause an increased number of Newton steps. The arising systems of linear equations are solved with a direct solver (MATLAB backslash subroutine).

\subsection{Image deblurring}

We consider the deblurring of images which are degraded by horizontal motion blur caused by either motion of the camera or the photographed object while taking a photo. Here, we proceed as in \cite{Ha02}. 
Our aim is the reconstruction of the original square image $\mathbf u$ from noisy measurements of the blurred image $\mathbf f$. 
As proposed in \cite{Ha02}, we consider the discrete problem $\mathbf K\mathbf u=\mathbf f$,
where $\mathbf u,\mathbf f\in\mathbb R^{N^2}$ and the Toeplitz matrix
\begin{equation}\label{eq:Kblur}
\mathbf K=\frac{1}{2\lfloor NL\rfloor+1} \begin{pmatrix}1 & \cdots & 1 & 0 & \cdots & 0\\ \vdots&\ddots& & \ddots & \ddots & \vdots\\
1 & & \ddots & & \ddots &0\\
0 & \ddots & & \ddots & & 1\\
\vdots & & \ddots & & \ddots & \vdots\\
0 & \cdots & 0 & 1 & \cdots& 1\end{pmatrix}\otimes \mathbf I\in\mathbb R^{N^2\times N^2},
\end{equation}
where the matrix on the left-hand side of the Kronecker product has bandwidth $2\lfloor NL\rfloor+1$ and where $\mathbf I\in\mathbb R^{N\times N}$ denotes the identity matrix. The blurring parameter $L$ characterizes the motion blurring of the image and we choose $L=0.1$. To avoid inverse crime, we discretize the problem with the Simpson rule to compute the blurred image $\mathbf f$ and use the discretization \eqref{eq:Kblur} to solve the inverse problem. The noise is computed with the MATLAB function \texttt{randn} and the noisy blurred image $\mathbf f^\delta$ contains $5\%$ relative noise, i.e.\ we have $\|\mathbf f-\mathbf f^\delta\|_2=5\% \|\mathbf f\|_2$.

The regularization parameters $w_k=w$, $k=1,\ldots,N^2$ are chosen equal and $w$ is computed by the discrepancy principle, see e.g.\ \cite{AnRa10,Bo09,EnHaNe96, ScKaHoKa12}. More precisely, we choose $w=0.9^{10}$, $q=0.9$ and $\tau=2$ and set $w:=wq$ until the inequality $\|\mathbf K\mathbf u_w-\mathbf f^\delta\|_2\le\tau \|\mathbf f-\mathbf f^\delta\|_2$ is fulfilled, where $\mathbf u_w$ denotes the solution to \eqref{eq:min} with $w_k=w$ for all $k$, $n=N^2$ and $g(\mathbf u)=\frac 12 \|\mathbf K\mathbf u-\mathbf f^\delta\|_2^2$. For each computation of $\mathbf u_w$, we choose the minimizer $\mathbf u_{\tilde w}$ of the Tikhonov functional with $\tilde w=w/0.9$ as starting vector. In this subsection, we mainly consider Algorithm \texttt{modBSSN} because the performance of \texttt{BSSN} from Section \ref{sec:22}  for quadratic functionals $g$ was discussed in \cite{HaRa15}.

\begin{figure}[t]
\begin{center}
\includegraphics[scale=0.25]{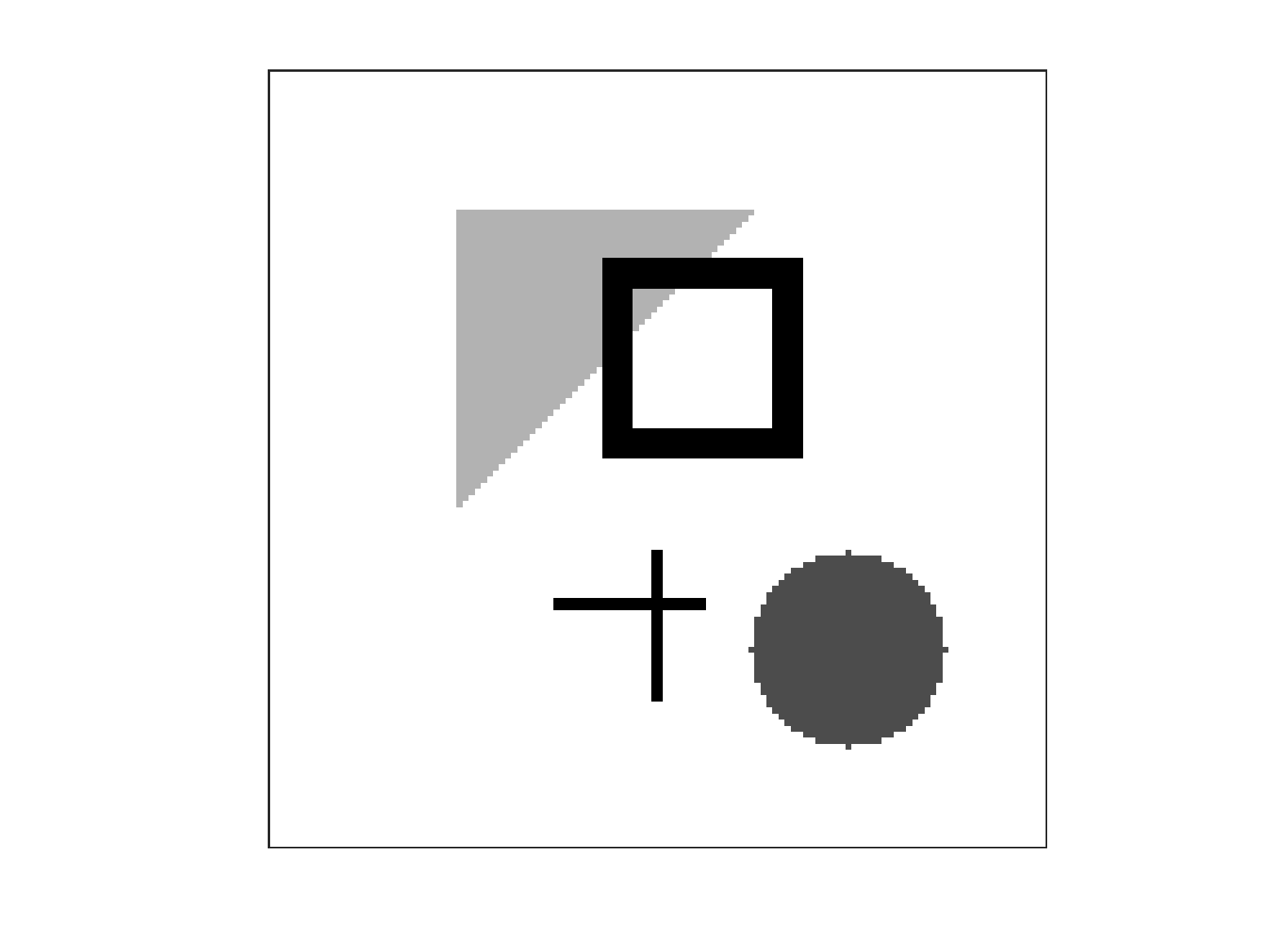}
\includegraphics[scale=0.25]{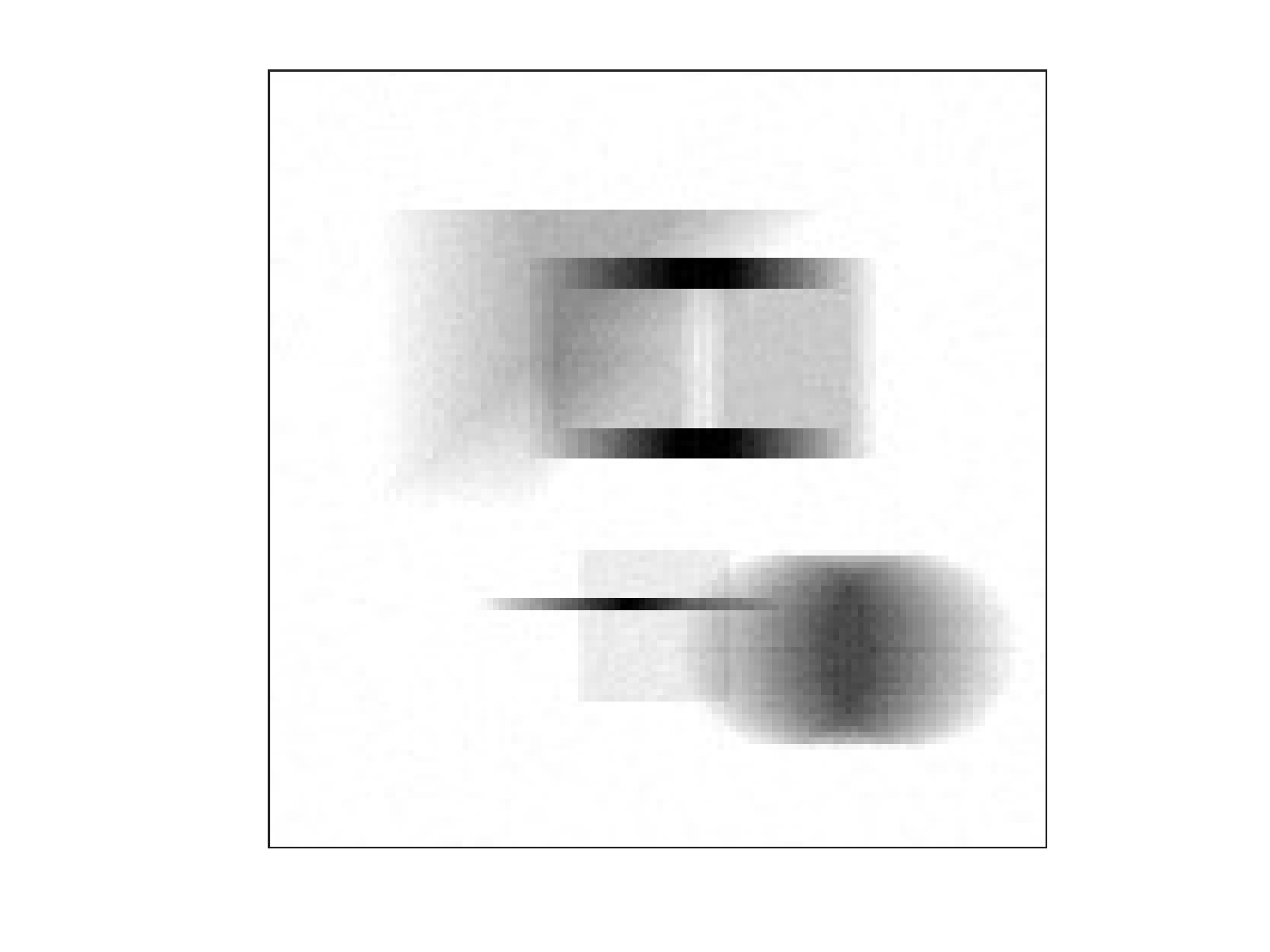}
\includegraphics[scale=0.25]{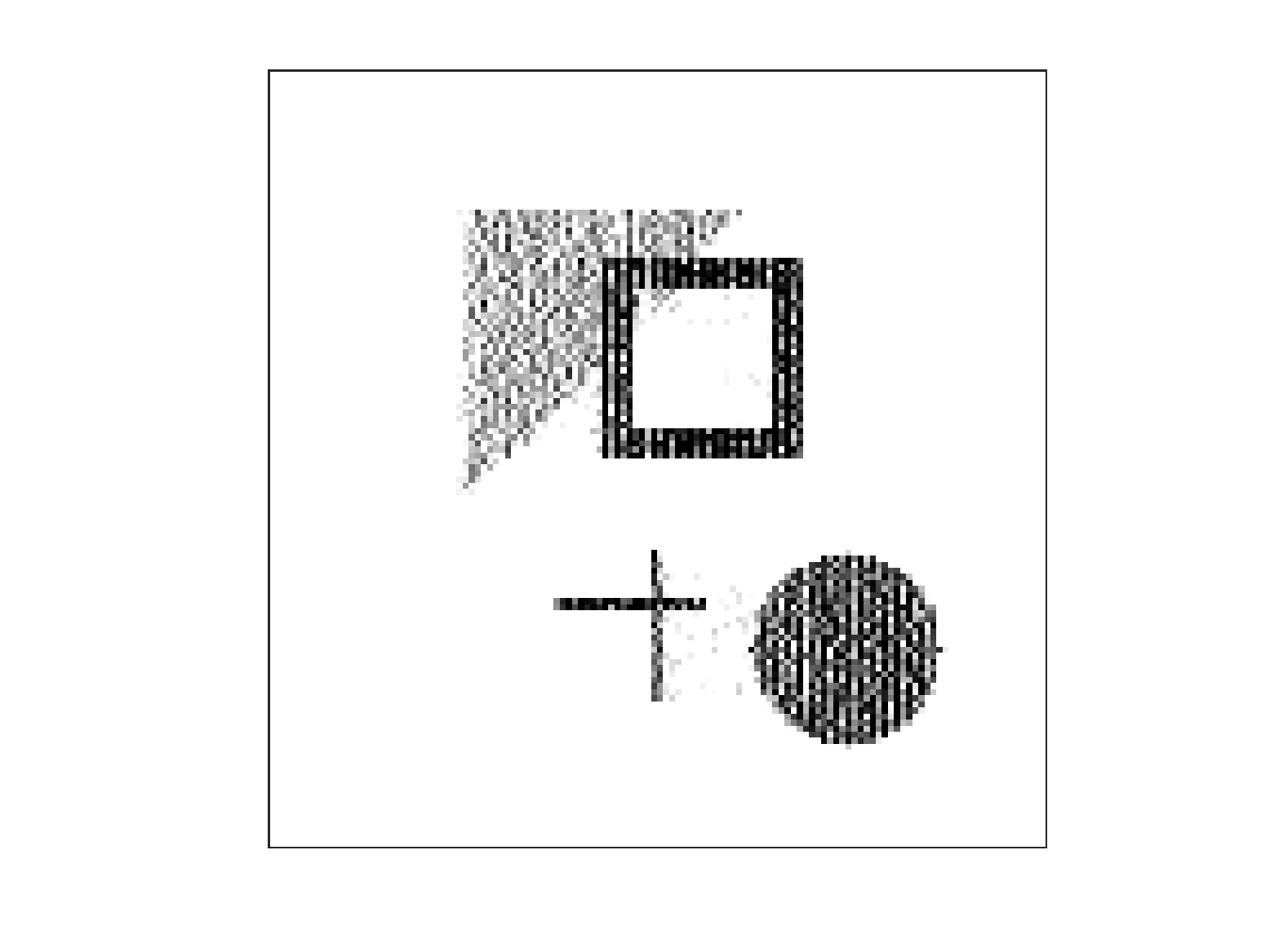}
\end{center}
\caption{Reconstruction of a blurred test image with $N^2=128^2$ pixels containing $5\%$ of noise using Algorithm \texttt{modBSSN} with regularization parameter $w=0.9^{33}\approx 0.0309$, $\gamma=10^5$ and blurring parameter $L=0.1$. From left to right: original image, blurred noisy image, reconstruction.}\label{fig:1}
\end{figure}

We consider an artificially created sparse image with about $15\%$ nonzero entries, the sparseness depends on the number $N^2$ of pixels, see Figure \ref{fig:1}. Here, the original image of the size $128\times 128$ pixels, the blurred image containing $5\%$ of noise and the reconstruction are presented. The blurring parameter is chosen as $L=0.1$, the regularization parameter is chosen as $w=0.9^{33}\approx 0.0309$ and the parameter $\gamma$ in Algorithm \texttt{modBSSN} is set equal to $10^5$. 

\begin{table}[tb]\caption{Performance of Algorithm $\mathtt{modBSSN}$ depending on the choice of $\gamma$. The reconstructions are computed for the image from Figure \ref{fig:1} with $128^2=16384$ pixels containing $5\%$ of noise, regularization parameter $w=0.9^{33}\approx 0.0309$ and blurring parameter $L=0.1$.}\label{tab:1}
\small
\vspace{0.2cm}
\begin{tabular}{llll}
\hline\noalign{\smallskip}
 $\gamma$ & number of steps & $\#\{j: t_j=1\}$\\
\noalign{\smallskip}\hline\noalign{\smallskip}
$\texttt{10}^\texttt{1}$ & \texttt{113}   & \texttt{3}\\
$\texttt{10}^\texttt{2}$ & \texttt{23}   & \texttt{5}\\
$\texttt{10}^\texttt{3}$ & \texttt{12}   & \texttt{9}\\
$\texttt{10}^\texttt{4}$ & \texttt{12}   & \texttt{10}\\
$\texttt{10}^\texttt{5}$ & \texttt{11}   & \texttt{8}\\
$\texttt{10}^\texttt{6}$ & \texttt{11}   & \texttt{8}\\
$\texttt{10}^\texttt{7}$ & \texttt{13}   & \texttt{7}\\
\noalign{\smallskip}\hline
\end{tabular}
\end{table}

Table \ref{tab:1} demonstrates the performance of Algorithm \texttt{modBSSN} for the image from Figure \ref{fig:1} depending on the choice of $\gamma>0$. The parameter $\gamma$ should not be chosen too small, because the number of Newton steps increases and the amount of steps with stepsize $t_k=1$ decreases for smaller $\gamma$. For $\gamma=10^4$, the stepsizes are chosen equal to $1$ in $10$ out of $12$ steps.

\begin{table}[tb]
\caption{History of the residual norms, the Tikhonov functional values, the stepsizes, the system sizes of the linear complementarity problems (LCP) and of the systems of linear equations (SLE) and the number of linear systems for the image from Figure \ref{fig:1} with $128^2=16384$ pixels containing $5\%$ of noise, reconstructed with Algorithm \texttt{modBSSN} with regularization parameter $w=0.9^{33}\approx 0.0309$, the parameter $\gamma=10^5$ and $L=0.1$.}\label{tab:2}
\small
\vspace{0.2cm}
\begin{tabular}{lllllll}
\hline\noalign{\smallskip}
 $j$ & $\|\mathbf F(\mathbf u^{(j)})\|_2$ & $J(\mathbf u^{(j)})$ & $t_j$ & size of LCP & size of SLE & $\#$ SLE\\
\noalign{\smallskip}\hline\noalign{\smallskip}
\texttt{0} & \texttt{2.3200e+06}  & \texttt{357.1522} & - & - & - & -\\
\texttt{1} & \texttt{8.8461e+02}  & \texttt{105.8198} & \texttt{1} & \texttt{0} & \texttt{6043} & \texttt{1}\\
\texttt{2} & \texttt{7.9131e+02}  & \texttt{76.3797} & \texttt{1} & \texttt{616} & \texttt{3944} & \texttt{12441}\\
\texttt{3} & \texttt{5.7716e+02}  & \texttt{66.8937} & \texttt{0.5} & \texttt{441} & \texttt{3161} & \texttt{13224}\\
\texttt{4} & \texttt{4.2644e+02} & \texttt{59.0065} & \texttt{0.5} & \texttt{321} & \texttt{2769} & \texttt{13616}\\
\texttt{5} & \texttt{2.3887e+02} & \texttt{51.8326} & \texttt{1} & \texttt{220} & \texttt{2574} & \texttt{13811}\\
\texttt{6} & \texttt{2.0991e+02} & \texttt{49.7913} & \texttt{0.5} & \texttt{90} & \texttt{2452} & \texttt{13933}\\
\texttt{7} & \texttt{1.8523e+02} & \texttt{47.4883} & \texttt{1} & \texttt{67} & \texttt{2376} & \texttt{14009}\\
\texttt{8} & \texttt{4.8111e+01} & \texttt{46.7994} & \texttt{1} & \texttt{22} & \texttt{2357} & \texttt{14028}\\
\texttt{9} & \texttt{4.1728e-01}  & \texttt{46.4408} & \texttt{1} & \texttt{13} & \texttt{2330} & \texttt{14055}\\
\texttt{10} & \texttt{2.4710e-01}  & \texttt{46.4212} & \texttt{1} & \texttt{0} & \texttt{2326} & \texttt{1}\\
\texttt{11} & \texttt{4.5635e-10}  & \texttt{46.4061} & \texttt{1} & \texttt{0} & \texttt{2325} & \texttt{1}\\
\noalign{\smallskip}\hline
\end{tabular}
\end{table}

The strict decrease of the residual norm in Algorithm \texttt{modBSSN} for the image from Figure \ref{fig:1} with $128^2=16384$ pixels is demonstrated in Table \ref{tab:2}. Here, the Tikhonov functional values
\begin{equation}\nonumber
J(\mathbf u^{(j)}):=\frac{1}{2}\|\mathbf K\mathbf u^{(j)}-\mathbf f^\delta\|_2^2+w\|\mathbf u^{(j)}\|_1,\quad j=0,1,\ldots
\end{equation}
are strictly decreasing as well, but this is not guaranteed in general. The stepsizes are eventually chosen equal to $1$ ensuring the locally quadratic convergence of Algorithm  \texttt{modBSSN}. The sizes of the linear complementarity problem (LCP), see \eqref{eq:LCP}, and of the systems of linear equations (SLE), solved in each step of Algorithm \texttt{modBSSN} to compute the matrix $\mathbf N$ from \eqref{eq:BSSNN}, the vector $\mathbf z$ from \eqref{eq:BSSNz} and the Newton direction \eqref{eq:d}, are usually decreasing in the course of the iteration. Regarding the number $128^2=16384$ of pixels, these systems are small. This is due to the structure of Algorithm \texttt{modBSSN}. Because of the starting vector $\mathbf u^{(0)}=\mathbf 0$, the set $\overline{\mathcal I^\pm}(\mathbf u^{(0)})$ is usually empty so that there is usually no LCP to solve in the first step. For other starting vectors $\mathbf u^{(0)}$, the size of the LCP in the first step may be larger. If a linear complementarity problem is set up in step $j$, additionally $|\overline{\mathcal I}(\mathbf u^{(j)})|$ linear systems with the same matrix have to be solved, cf.\ Section \ref{sec:23}.

In Table \ref{tab:3}, five algorithms for the deblurring of the noisy image from Figure \ref{fig:1} are compared: Algorithm \texttt{modBSSN} and Algorithm \texttt{hybridBSSN} with the choice $\gamma=10^5$, the globalized semismooth Newton method (\texttt{BSSN}) from \cite{HaRa15} with the choice $\gamma=10^5$, sparse reconstruction by separable approximation\footnotemark[1] (\texttt{SpaRSA}) from \cite{WrNoFi09} and Barzilai-Borwein gradient projection for sparse reconstruction\footnote[1]{The implementations of \texttt{SpaRSA}, and \texttt{GPSR$\_$BB} are taken from \texttt{http://www.lx.it.pt/$\sim$mtf/SpaRSA/}\\ and \texttt{http://www.lx.it.pt/$\sim$mtf/GPSR/} respectively (30 June 2015).} (\texttt{GPSR$\_$BB}) from \cite{FiNoWr07}. Note that runtime is implementation-dependent. 
Note also that \texttt{BSSN} differs from \texttt{modBSSN} only in the choice of the index sets \eqref{eq:A+}--\eqref{eq:I-} resp.\ the modified index sets \eqref{eq:A+modified}--\eqref{eq:I-modified}. By the modification of the index sets, the theoretical drawback that \texttt{BSSN} may fail to converge was eliminated. Therefore, one has to solve mixed linear complementarity problems instead of solving only systems of linear equations. In practice, applying \texttt{BSSN}, complementarity problems usually do not appear. The stopping criterion of Algorithm \texttt{modBSSN}, Algorithm \texttt{hybridBSSN} and Algorithm \texttt{BSSN} is a residual norm $\|\mathbf F(\mathbf u^{(j)})\|_2< 10^{-7}$. The other three algorithms are terminated if the Tikhonov functional value falls below the threshold $J^*+10^{-7}$, where $J^*$ denotes the Tikhonov functional value of \texttt{hybridBSSN} at convergence. The average runtime (clock time) of five runs with starting vector $\mathbf u^{(0)}=\mathbf 0$, the Tikhonov functional value $J_{end}$ at termination, the difference of $J_{end}$ to $J^*$, the number of iterations and the number of zeros of the computed solution are listed for the different algorithms. All algorithms produce sparse solutions with $14059$ resp.\ $14057$ zero components, i.e.\ about $14.2\%$ nonzero entries. The semismooth Newton methods need only few iterations compared to the other methods. The fastest algorithms are \texttt{BSSN} and \texttt{hybridBSSN} followed by \texttt{SpaRSA}, \texttt{modBSSN} and \texttt{GPSR$\_$BB}. The runtime of Algorithm \texttt{modBSSN} may be improved by using another solver for the linear complementarity problems. In Table \ref{tab:3} and in the following runtime measurements, the computation of the regularization parameter by the discrepancy principle is not included in the listed runtimes. The runtimes are measured with the MATLAB command \texttt{tic toc}. 

\begin{table}[tbp]
\caption{Comparison of different algorithms for the deblurring of the image from Figure \ref{fig:1} with $N=128^2$ pixels, $5\%$ of noise, blurring parameter $L=0.1$ and regularization parameter $w=0.9^{33}\approx 0.0309$. The starting vector $\mathbf u^{(0)}=\mathbf 0$ is chosen for all algorithms.}\label{tab:3}
\small
\vspace{0.2cm}
\begin{tabular}{l|lllll}
\hline\noalign{\smallskip}
algorithm & average runtime(s) & $J_{end}$ & $J_{end}-J^*$ & $\#$ iterations & $\#$ zeros\\
\noalign{\smallskip}\hline\noalign{\smallskip}
  \texttt{modBSSN} & \texttt{8.54} & \texttt{46.4061} & \texttt{1.4211e-14} & \texttt{11}& \texttt{14059}\\ 
  \texttt{BSSN} & \texttt{0.32} & \texttt{46.4061} & \texttt{0} & \texttt{13}& \texttt{14059}\\
   \texttt{hybridBSSN} & \texttt{0.32} & \texttt{46.4061} & \texttt{0} & \texttt{13}& \texttt{14059}\\
  \texttt{SpaRSA} & \texttt{1.96} & \texttt{46.4061} & \texttt{9.6221e-08} & \texttt{1057}& \texttt{14057}\\ 
  \texttt{GPSR$\_$BB} & \texttt{15.21} & \texttt{46.4061} & \texttt{9.9948e-08} &\texttt{6352} & \texttt{14059}\\
\noalign{\smallskip}\hline
\end{tabular}
\end{table}

\begin{figure}[t]
\begin{center}
\includegraphics[scale=0.31]{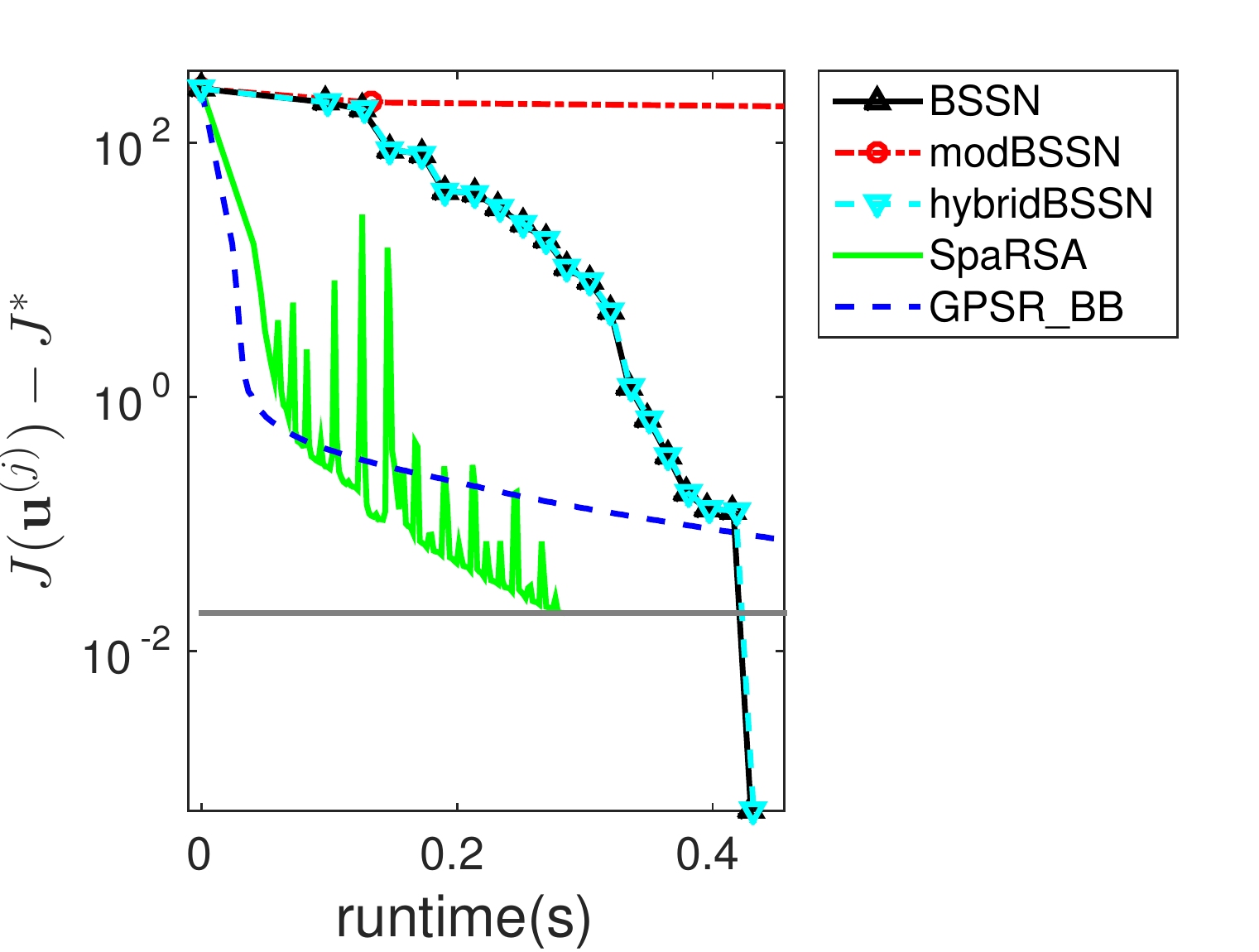}
\includegraphics[scale=0.31]{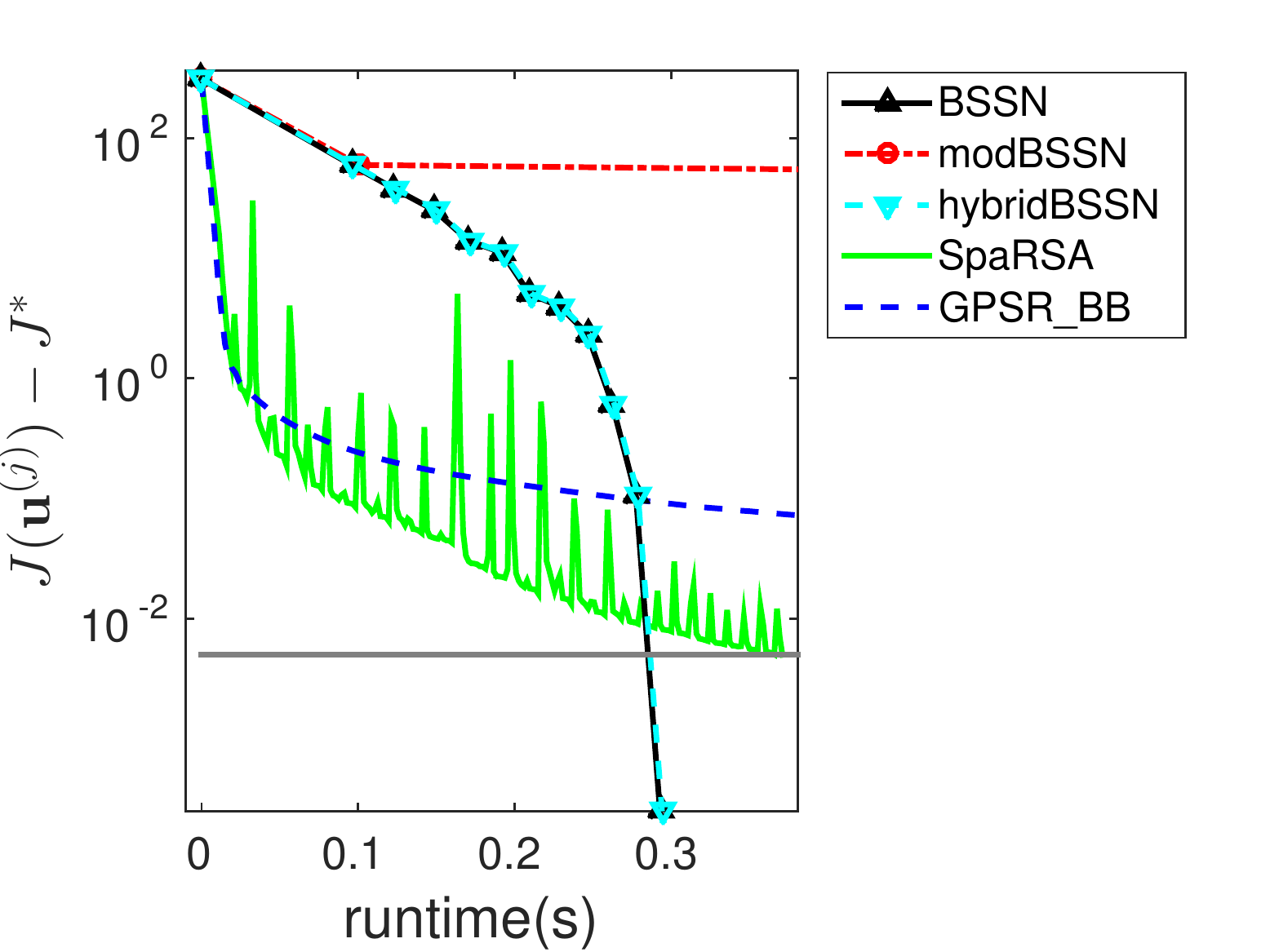}
\includegraphics[scale=0.31]{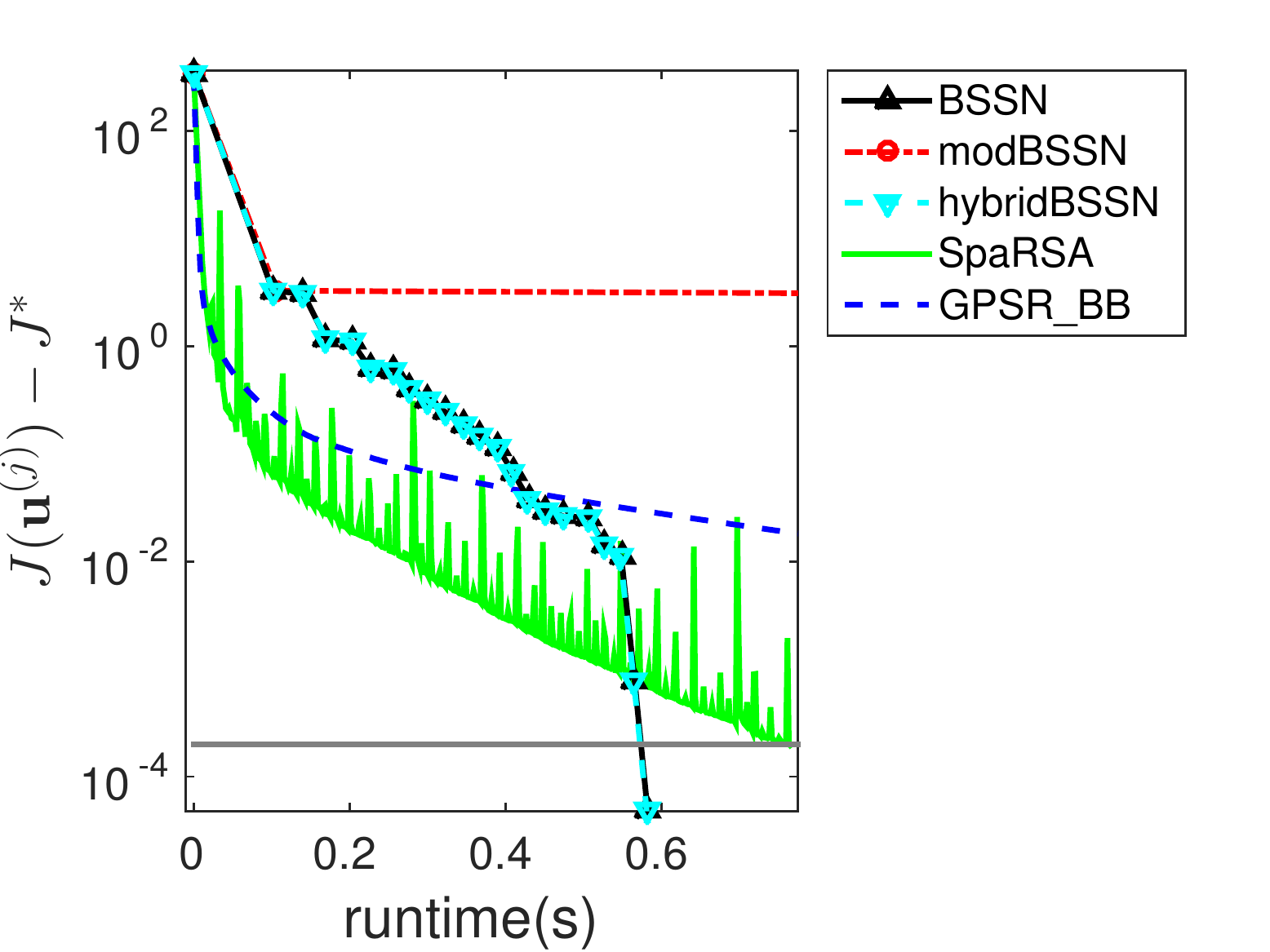}
\end{center}
\caption{Runtime history of the difference of the Tikhonov functional values and $J^*$ for different noise levels $\delta$. From left to right: $\delta=10\%$, $5\%$, $1\%$. The gray line marks the target $2\delta^2$ in each case.}\label{fig:2}
\end{figure}

The runtime history of the difference $J(\mathbf u^{(j)})-J^*$ of the Tikhonov functional values $J(\mathbf u^{(j)})$ of the algorithms considered in Table \ref{tab:3} to the Tikhonov functional value $J^*$ of Algorithm \texttt{hybridBSSN} at convergence is shown in Figure \ref{fig:2} for different noise levels $\delta=10\%$, $5\%$, $1\%$. The parameter $\gamma$ in the algorithms \texttt{BSSN}, \texttt{modBSSN} and \texttt{hybridBSSN} is chosen equal to $10^5$ and we set $L=0.1$ and $N^2=128^2$. Depending on the noise level, it may be adequate to solve the minimization problem only up to an expected accuracy. $\ell_1$-Tikhonov regularization with a posteriori parameter choice by the discrepancy principle has a linear convergence rate, see \cite{GrHaSc11}, i.e.\ $\|\mathbf u^\dagger-\mathbf u^*_{w,\delta}\|\le c\delta\|\mathbf f\|$, where $c>0$ is a constant, $\mathbf u^\dagger$ denotes the true solution to $\mathbf K\mathbf u=\mathbf f$ with unperturbed right-hand side $\mathbf f$ and $\mathbf u^*_{w.\delta}$ denotes the solution to \eqref{eq:min_quadraticg} with perturbed data $\mathbf f^\delta$, regularization parameter $w$ and noiselevel $\delta$. Therefore, we decided to minimize the Tikhonov functional up to an accuracy of $2\delta^2$. For high noise levels and $N^2=128^2$, Algorithm \texttt{SpaRSA} outperforms \texttt{BSSN} and \texttt{hybridBSSN} because it reaches the target first. If the minimization problem is solved more accurately in case of smaller noise levels or if the number $N^2$ of pixels increases, \texttt{BSSN} and \texttt{hybridBSSN} are advantageous in terms of runtime in this example, cf.\ Figure \ref{fig:3}.

\begin{figure}[t]
\begin{center}
\includegraphics[scale=0.35]{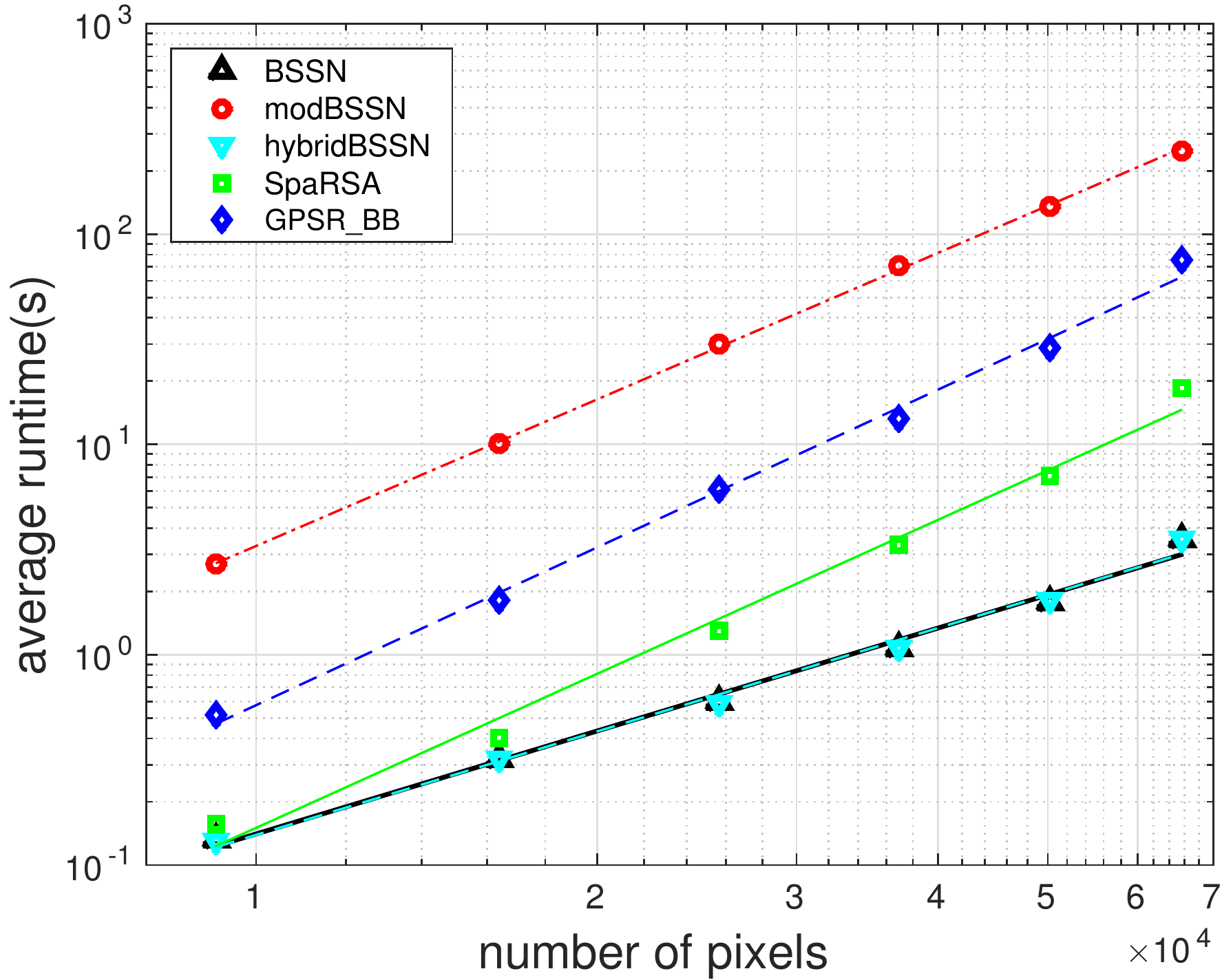}\quad
\includegraphics[scale=0.35]{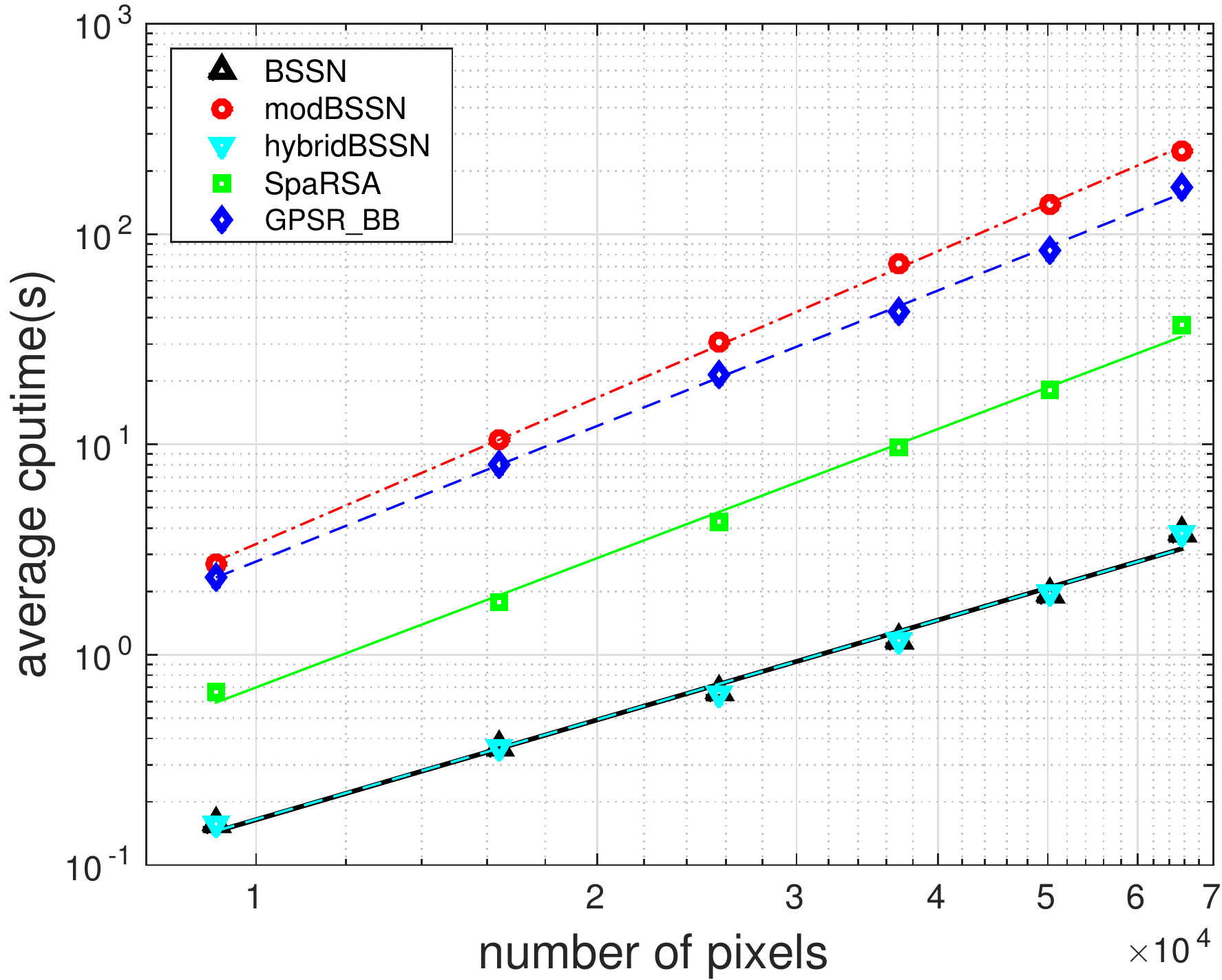}
\end{center}
\caption{Average runtime and average cputime of $5$ runs depending on the number $N^2=(32k)^2$, $k=3,\ldots,8$ of pixels for images containing $5\%$ of noise.}\label{fig:3}
\end{figure}

Figure \ref{fig:3} presents a clock time and a cputime comparison of the considered algorithms for increasing image sizes $N^2=(32k)^2$, $k=3,\ldots,8$. The cputime is measured with the MATLAB subroutine \texttt{cputime}. Once again, the blurring parameter is $L=0.1$ and the images contain $5\%$ of noise. The starting vector is $\mathbf u^{(0)}=\mathbf 0$ for all methods, the stopping criterion $J(\mathbf u^{(j)})\le J^*+2\delta^2$ for \texttt{GPSR$\_$BB} and \texttt{SpaRSA} is chosen as in Figure \ref{fig:2} and we choose $\gamma=10^4$ and the stopping criterion $\|\mathbf F(\mathbf u^{(j)})\|_2< 10^{-7}$ for \texttt{BSSN}, \texttt{hybridBSSN} and \texttt{modBSSN}. Again, the average runtimes resp.\ cputimes of $5$ runs are shown. Algorithms \texttt{BSSN} and \texttt{hybridBSSN} outperform the other algorithms regarding cputime in this example, followed by \texttt{SpaRSA}, \texttt{GPSR$\_$BB} and \texttt{modBSSN}. However, \texttt{SpaRSA} and \texttt{GPSR$\_$BB} are better parallelizable than the B-semismooth Newton methods.

\subsection{Robust regression}
Given data $\mathbf a_1,\ldots,\mathbf a_m\in\mathbb R^n$ and $\mathbf y\in\mathbb R^m$, $m\ge n$, our aim is to fit a linear model $\mathbf A\mathbf u=\mathbf y$ with $\mathbf u\in\mathbb R^n$ and $\mathbf A=(\mathbf a_1\cdots \mathbf a_m)^\top\in\mathbb R^{m\times n}$ to the given data. Errors in data collection may cause outliers, and robust M-estimators give less influence to outliers than the ordinary least squares approach \cite{Fu99}. Here, we choose the well-known $L_1$-$L_2$ estimator, see e.g.\ \cite{ClWo15}.
For a parameter $\rho>0$, the measure function $\varphi_\rho\colon\mathbb R\to\mathbb R^+_0$, $\varphi_\rho(x):=2(\sqrt{\rho+x^2/2}-\sqrt{\rho})$ fulfills the conditions $\varphi_\rho(x)=\varphi_\rho(-x)$ and $\varphi_\rho$ is strictly convex \cite{Al11, ClWo15}. We choose $\rho=1$ and the discrepancy term $g\colon\mathbb R^n\to\mathbb R$,
\begin{equation}\nonumber
g(\mathbf u):=\frac 1m\sum\limits_{k=1}^m \varphi_1(\mathbf a_k^\top\mathbf u-y_k)=\frac 2m\sum\limits_{k=1}^{m} \Big(\sqrt{1+(\mathbf a_k^\top\mathbf u-y_k)^2/2}-1\Big).
\end{equation}
To additionally obtain a sparse regression model, we add an $\ell_1$-penalty term
\begin{equation}\label{eq:robustregression}
\min\limits_{\mathbf u\in\mathbb R^n} g(\mathbf u)+w \|\mathbf u\|_1,
\end{equation}
cf.\ \eqref{eq:min}, where the parameter $w>0$ acts as regularization parameter, see e.g.\ \cite{AlCrGe13,LiScRaCe15}. In the following, we assume that $\mathbf A=(\mathbf a_1\cdots\mathbf a_m)^\top\in\mathbb R^{m\times n}$ is injective. Then, the Hessian $\nabla^2 g(\mathbf u)$ is positive definite for all $\mathbf u\in\mathbb R^n$. However, it is not ensured that the level sets of $\Theta$ stay bounded. The data $\mathbf a_1,\ldots,\mathbf a_m\in\mathbb R^n$ are chosen normally distributed with standard deviation $1$ and mean $0$. We compute $\mathbf y^\delta=\mathbf A\mathbf u+\mathbf e$, where $\mathbf e \sim \mathcal N(0,1)$ and for a portion of the entries of $\mathbf y^\delta$ we choose $\mathbf e \sim \mathcal N(0,50)$, i.e.\ we construct outliers.

\begin{figure}[tbp]
\begin{center}
\includegraphics[scale=0.35]{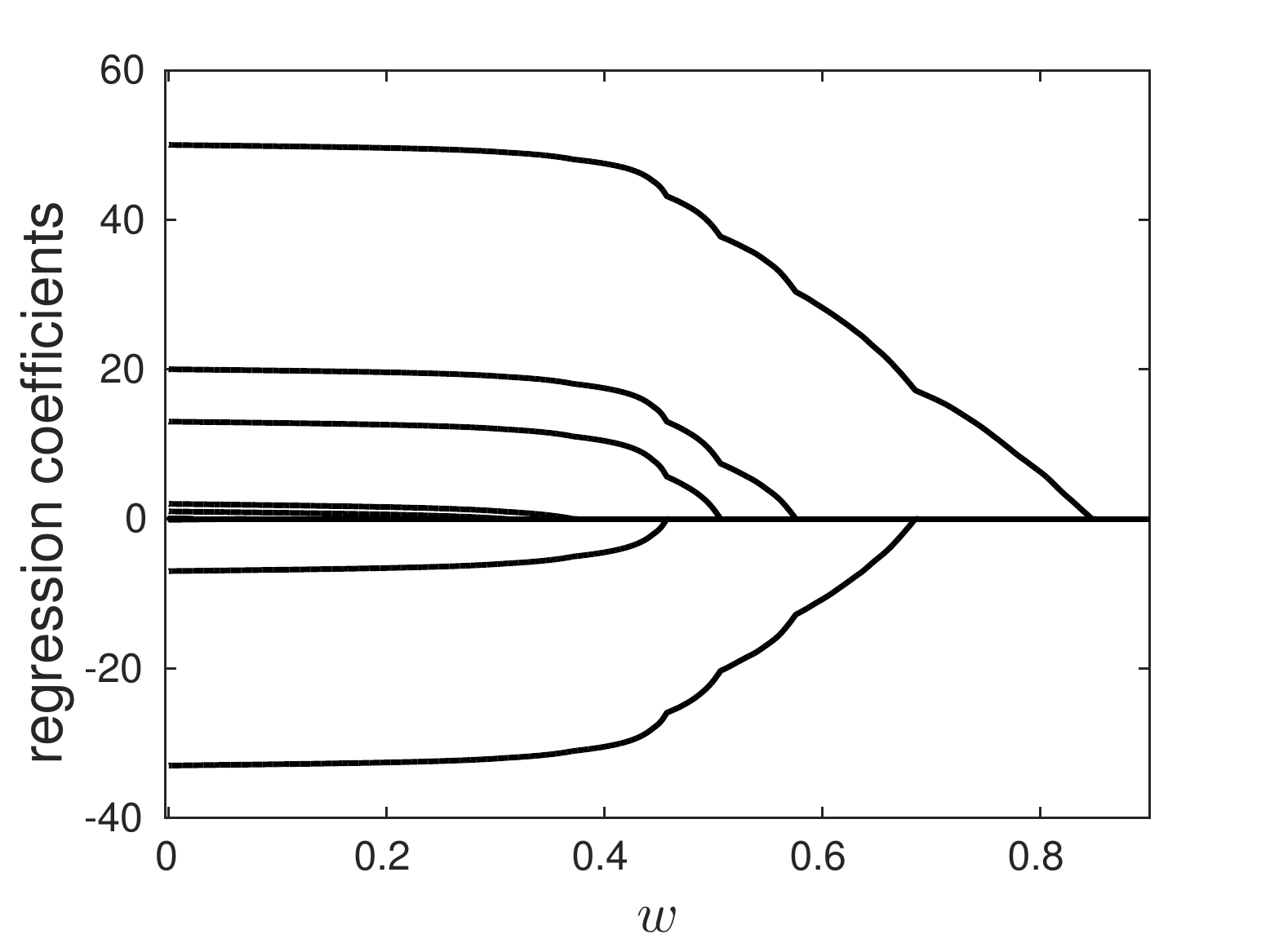}
\includegraphics[scale=0.35]{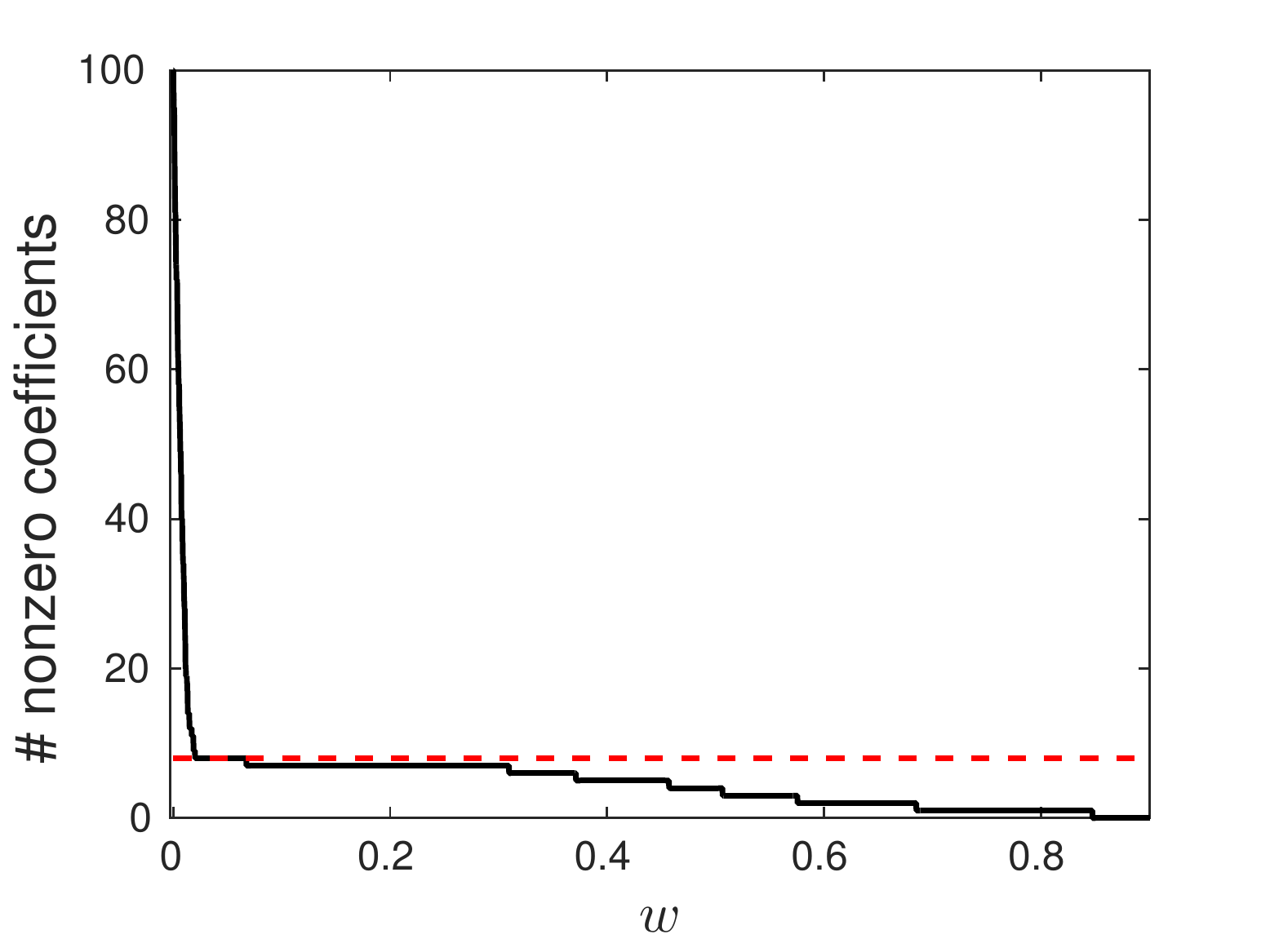}
\end{center}
\caption{Sparsity of the regression model depending on the choice of the regularization parameter $w$. Left: path of the computed regression coefficients. Right: number of nonzero regression coefficients (red dashed line: number of nonzero coefficients of the true regression model).}\label{fig:4}
\end{figure}

If the underlying model is unknown, there are several possibilities to select the regularization parameter $w$. For example, cross-validation may be used as proposed in \cite{AlCrGe13,Ti96}. Here, we assume that the true model is known. Similar to the parameter choice strategy proposed in \cite{KoLaNiSi12}, we choose the regularization parameter $w$ so that $\#\{k: (\mathbf u_w)_k\neq 0\}$ is equal to the number of nonzero elements of the true solution and $\mathbf u_w$ has minimal standard error
\begin{equation}\label{eq:sigma}
\sigma=\sqrt{\frac{1}{m-n-1}\sum_{k=1}^m (\mathbf a_k^\top\mathbf u_w-y_k)^2},
\end{equation}
 respectively maximal $R^2$-value 
\begin{equation}\label{eq:R2}
R^2=1-\frac{\frac{1}{m-n-1}\sum\limits_{k=1}^{m} (\mathbf a_k^\top\mathbf u_w-y_k)^2}{\frac{1}{m-1}\sum\limits_{k=1}^{m} (\overline y-y_k)^2},
\end{equation}
where $\mathbf u_w$ denotes the vector of computed regression coefficients for the regularization parameter $w$. Therefore, we minimize \eqref{eq:robustregression} for $w=\nu/10000$, $\nu=1,\ldots,9000$ and choose the starting vector $\mathbf u^{(0)}$ for $\nu>1$ as the solution to \eqref{eq:robustregression} of the last computation with $w=(\nu-1)/10000$. The true model is of the size $m=10000$, $n=100$ and has $8$ nonzero coefficients with weights $-33$, $-7$, $-0.1$, $1$, $2$, $13$, $20$ and $50$. The noisy vector $\mathbf y^\delta$ contains $10\%$ outliers. Figure \ref{fig:4} demonstrates the influence of the regularization parameter $w>0$ on the sparsity of the regression model. For the computations, we set $\gamma=10$ and the tolerance equal to $10^{-7}$ in Algorithm \texttt{modBSSN}. For very small $w$, all coefficients are chosen nonzero. If $w$ is chosen larger than $0.8474$, all coefficients are chosen equal to zero.

\begin{figure}[tbp]
\begin{center}
\includegraphics[scale=0.3]{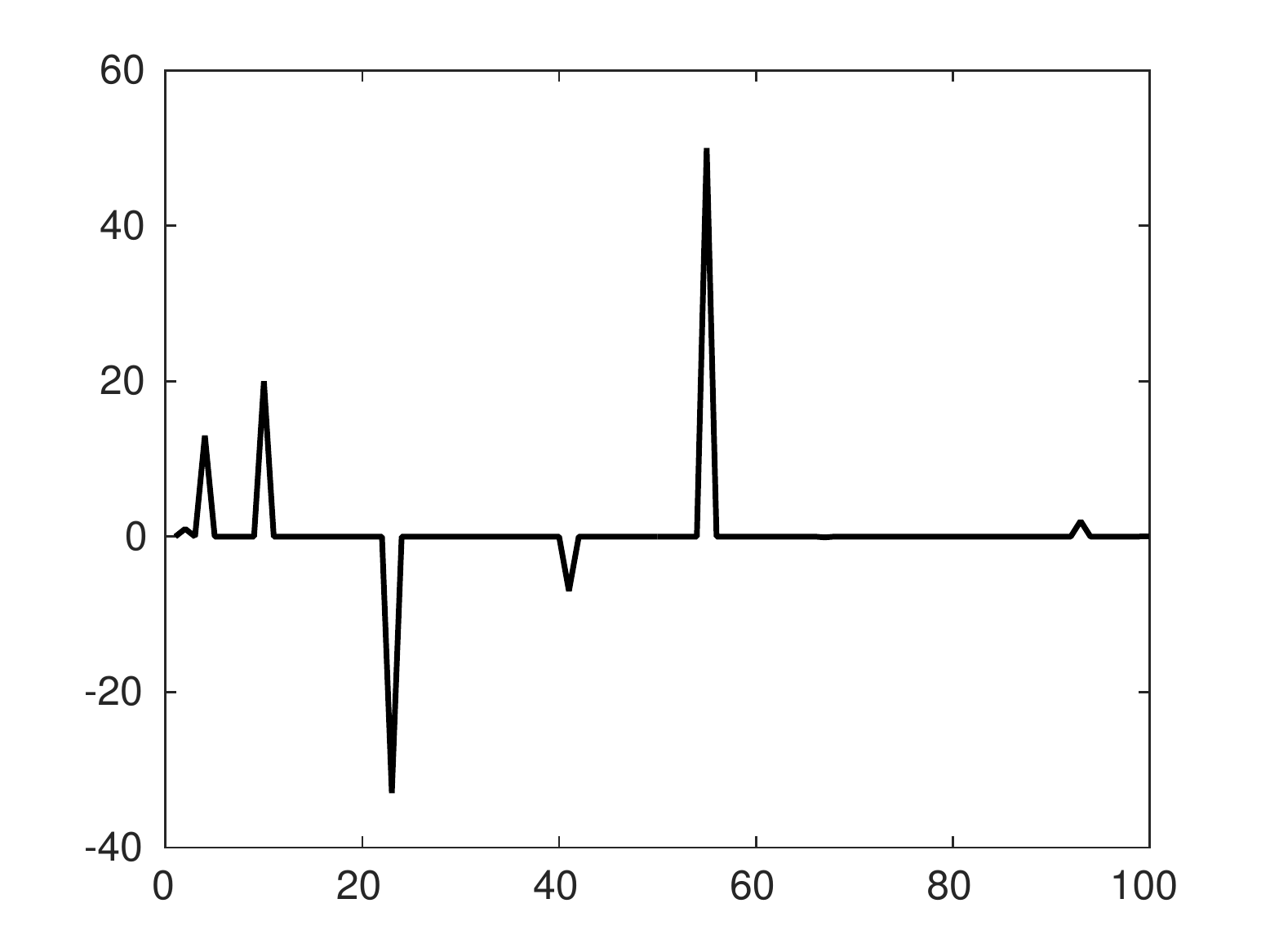}
\includegraphics[scale=0.3]{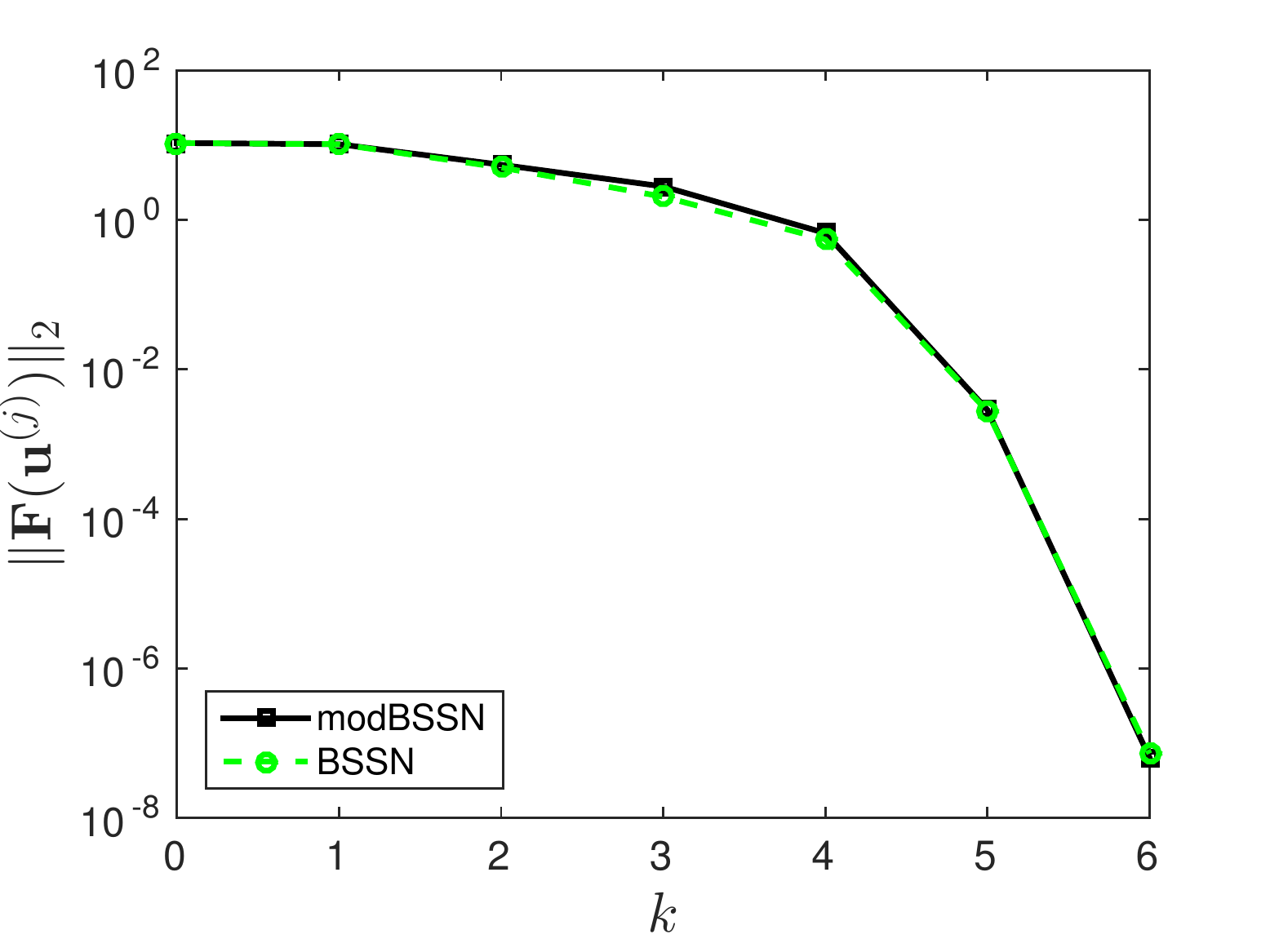}
\includegraphics[scale=0.3]{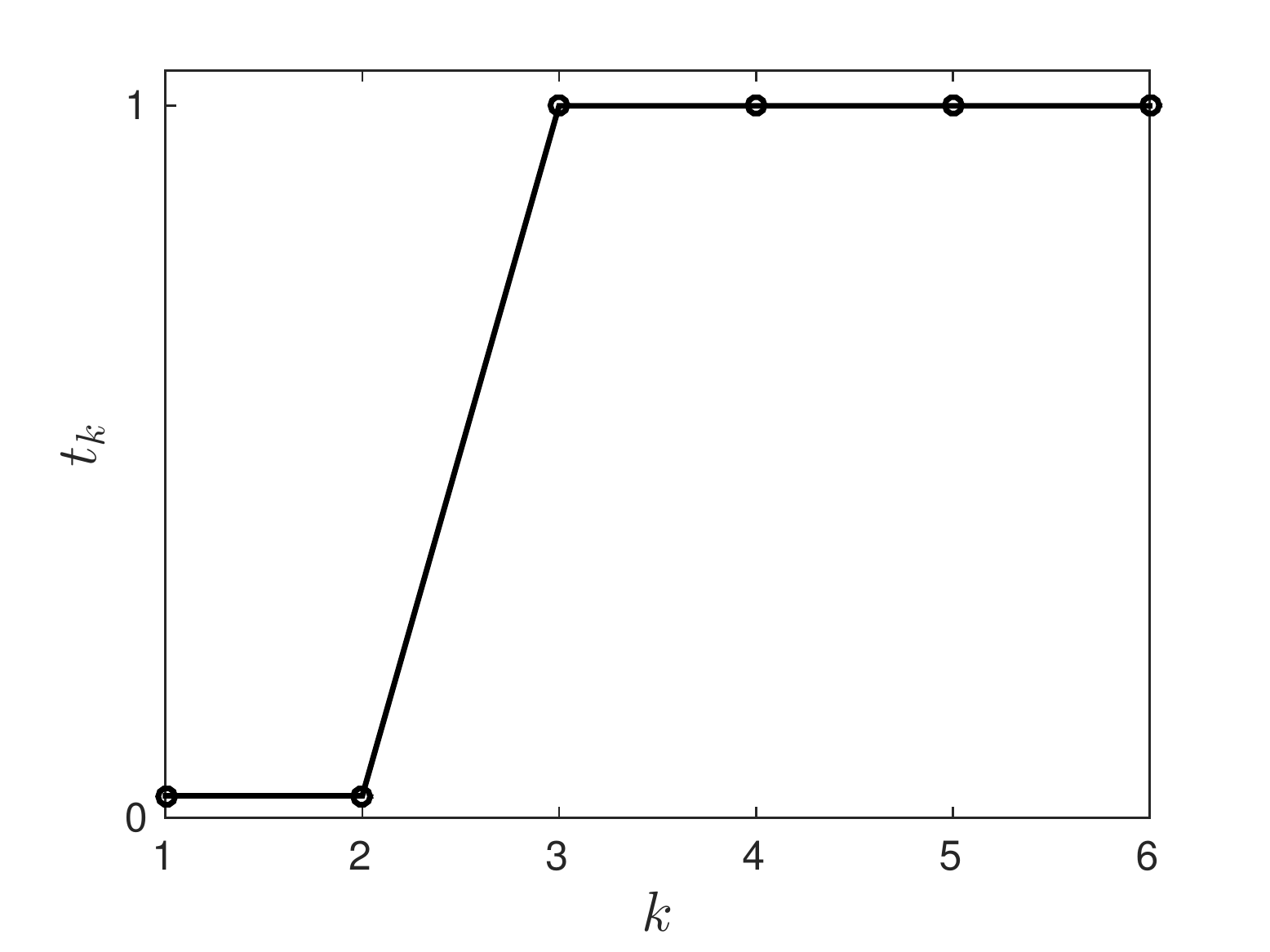}
\end{center}
\caption{Illustration of the performance of Algorithm $\mathtt{modBSSN}$ and Algorithm $\mathtt{BSSN}$ with $w=0.0201$ and $\gamma=10$ for the robust regression example. From left to right: true regression coefficients, residual norms, chosen stepsizes (identical for both algorithms).}\label{fig:5}
\end{figure}

Figure \ref{fig:5} shows the convergence properties of Algorithm \texttt{modBSSN} and the B-semismooth Newton method (\texttt{BSSN}) from Section \ref{sec:22} for the example from Figure \ref{fig:4}. We choose $w=0.0201$ and $\gamma=10$. Both algorithms converge within $6$ steps and the chosen stepsizes of the two algorithms coincide in this example. The stepsizes are four times chosen equal to $1$. For other values of $\gamma$, more Newton steps need to be computed. 

\section{Conclusion}
In the present paper, we are concerned with the efficient minimization of functionals of the type \eqref{eq:mininfinitedim}. In \cite{HaRa15}, a globalized B-semismooth Newton method was presented for quadratic discrepancy terms. Here, we generalized the me\-thod from \cite{HaRa15} to nonquadratic discrepancy terms. Additionally, by modifying index subsets, a modified algorithm was shown to be globally convergent without any additional requirements on the a priori unknown accumulation point of the sequence of iterates. Thus, we have overcome a theoretical drawback of \cite{HaRa15} concerning global convergence. Another advantage of the presented modified method is its local convergence speed. If an additional assumption is fulfilled, we have shown that the stepsizes are chosen eventually equal to $1$ and locally quadratic convergence is achieved.

By design, the proposed modified B-semismooth Newton method requires the solution of one linear complementarity problem per iteration, instead of one linear system as in other generalized Newton schemes. However, we have demonstrated that these systems stay small relative to the number of unknowns and therefore do not spoil the overall complexity. A hybrid version combines the efficiency of the B-semismooth Newton method and the convergence properties of the modified method.

In further research, one may focus on the development of globally convergent inexact Newton methods as proposed in \cite{EiWa94} for the smooth case enabling the design of matrix-free variants. Moreover, the globalization of quasi-Newton methods like \cite{MuHaMaPi13} could be considered.

\bibliography{arxiv_v3}

\begin{thebibliography}{10}

\bibitem{AlCrGe13}
A.~Alfons, C.~Croux, and S.~Gelper.
\newblock Sparse least trimmed squares regression for analyzing
  high-dimensional large data sets.
\newblock {\em Ann. Appl. Stat.}, 7(1):226--248, 2013.

\bibitem{Al11}
{\"O}.~G. Alma.
\newblock Comparison of robust regression methods in linear regression.
\newblock {\em Int. J. Contemp. Math. Sciences}, 6(9):409--421, 2011.

\bibitem{AnRa10}
S.~W. Anzengruber and R.~Ramlau.
\newblock {Morozov's discrepancy principle for Tikhonov-type functionals with
  nonlinear operators}.
\newblock {\em Inverse Problems}, 26(2):025001, 2010.

\bibitem{BeTe09}
A.~Beck and M.~Teboulle.
\newblock A fast iterative shrinkage-thresholding algorithm for linear inverse
  problems.
\newblock {\em SIAM J. Imaging Sci.}, 2(1):183--202, 2009.

\bibitem{Bo09}
T.~Bonesky.
\newblock {Morozov's discrepancy principle and Tikhonov-type functionals}.
\newblock {\em Inverse Problems}, 25(1):015015, 2009.

\bibitem{BoBrLoMa07}
T.~Bonesky, K.~Bredies, D.~A. Lorenz, and P.~Maass.
\newblock A generalized conditional gradient method for nonlinear operator
  equations with sparsity constraints.
\newblock {\em Inverse Problems}, 23(5):2041--2058, 2007.

\bibitem{BoDaMaRa10}
T.~Bonesky, S.~Dahlke, P.~Maass, and T.~Raasch.
\newblock Adaptive wavelet methods and sparsity reconstruction for inverse heat
  conduction problems.
\newblock {\em Adv. Comput. Math.}, 33(4):385--411, 2010.

\bibitem{BrLoMa09}
K.~Bredies, D.~A. Lorenz, and P.~Maass.
\newblock A generalized conditional gradient method and its connection to an
  iterative shrinkage method.
\newblock {\em Comput. Optim. Appl.}, 42(2):173--193, 2009.

\bibitem{ChNaQi00}
X.~{Chen}, Z.~{Nashed}, and L.~{Qi}.
\newblock {Smoothing methods and semismooth methods for nondifferentiable
  operator equations}.
\newblock {\em {SIAM J. Numer. Anal.}}, 38(4):1200--1216, 2000.

\bibitem{ClWo15}
K.~L. Clarkson and D.~P. Woodruff.
\newblock Sketching for {M}-estimators: a unified approach to robust
  regression.
\newblock In P.~Indyk, editor, {\em Proceedings of the Twenty-Sixth Annual
  ACM-SIAM Symposium on Discrete Algorithms}, pages 921--939. SIAM, 2015.

\bibitem{CoWa05}
P.~L. Combettes and V.~R. Wajs.
\newblock Signal recovery by proximal forward-backward splitting.
\newblock {\em Multiscale Model. Simul.}, 4(4):1168--1200, 2005.

\bibitem{CoPaSt09}
R.~W. {Cottle}, J.-S. {Pang}, and R.~E. {Stone}.
\newblock {\em {The linear complementarity problem}}.
\newblock Philadelphia, SIAM, 2009.

\bibitem{DaDeMo04}
I.~Daubechies, M.~Defrise, and C.~De~Mol.
\newblock An iterative thresholding algorithm for linear inverse problems with
  a sparsity constraint.
\newblock {\em Comm. Pure Appl. Math.}, 57(11):1413--1457, 2004.

\bibitem{DiFe95}
S.~P. Dirkse and M.~C. Ferris.
\newblock The path solver: a nonmonotone stabilization scheme for mixed
  complementarity problems.
\newblock {\em Optim. Methods Softw.}, 5(2):123--156, 1995.

\bibitem{EiWa94}
S.~C. Eisenstat and H.~F. Walker.
\newblock Globally convergent inexact {N}ewton methods.
\newblock {\em SIAM J. Optim.}, 4(2):393--422, 1994.

\bibitem{EnHaNe96}
H.~W. {Engl}, M.~{Hanke}, and A.~{Neubauer}.
\newblock {\em {Regularization of inverse problems}}, volume 375 of {\em
  Mathematics and its Applications}.
\newblock Dordrecht, Kluwer Academic Publishers, 1996.

\bibitem{FaPa03I}
F.~{Facchinei} and J.-S. {Pang}.
\newblock {\em {Finite-dimensional variational inequalities and complementarity
  problems}}, volume~1.
\newblock New York, Springer, 2003.

\bibitem{FaPa03II}
F.~{Facchinei} and J.-S. {Pang}.
\newblock {\em {Finite-dimensional variational inequalities and complementarity
  problems}}, volume~2.
\newblock New York, Springer, 2003.

\bibitem{FeMu99}
M.~C. Ferris and T.~S. Munson.
\newblock Interfaces to {PATH} 3.0: design, implementation and usage.
\newblock {\em Comput. Optim. Appl.}, 12(1):207--227, 1999.

\bibitem{FiNoWr07}
M.~A.~T. Figueiredo, R.~D. Nowak, and S.~J. Wright.
\newblock Gradient projection for sparse reconstruction: application to
  compressed sensing and other inverse problems.
\newblock {\em IEEE J. Sel. Topics Signal Process.}, 1(4):586 -- 597, 2007.

\bibitem{Fi95}
A.~Fischer.
\newblock {A Newton-type method for positive-semidefinite linear
  complementarity problems}.
\newblock {\em J. Optim. Theory Appl.}, 86(3):585--608, 1995.

\bibitem{FiKa96}
A.~Fischer and C.~Kanzow.
\newblock On finite termination of an iterative method for linear
  complementarity problems.
\newblock {\em Math. Program.}, 74(3):279--292, 1996.

\bibitem{Fu99}
J.-J. Fuchs.
\newblock An inverse problem approach to robust regression.
\newblock In {\em Proceedings of the IEEE International Conference on
  Acoustics, Speech, and Signal Processing, Phoenix}, volume~4, pages
  1809--1812. IEEE, 1999.

\bibitem{GeKletal12}
M.~Gehre, T.~Kluth, A.~Lipponen, B.~Jin, A.~Sepp\"{a}nen, J.~P. Kaipio, and
  P.~Maass.
\newblock Sparsity reconstruction in electrical impedance tomography: An
  experimental evaluation.
\newblock {\em J. Comput. Appl. Math.}, 236(8):2126--2136, 2012.

\bibitem{GrHaSc11}
M.~Grasmair, O.~Scherzer, and M.~Haltmeier.
\newblock Necessary and sufficient conditions for linear convergence of
  $\ell_1$-regularization.
\newblock {\em Comm. Pure Appl. Math.}, 64(2):161--182, 2011.

\bibitem{GrLo08}
R.~Griesse and D.~A. Lorenz.
\newblock {A semismooth Newton method for Tikhonov functionals with sparsity
  constraints}.
\newblock {\em Inverse Problems}, 24(3):035007, 2008.

\bibitem{HaPaRa92}
S.-P. {Han}, J.-S. {Pang}, and N.~{Rangaraj}.
\newblock {Globally convergent Newton methods for nonsmooth equations}.
\newblock {\em {Math. Oper. Res.}}, 17(3):586--607, 1992.

\bibitem{HaRa15}
E.~Hans and T.~Raasch.
\newblock {Global convergence of damped semismooth Newton methods for $\ell_1$
  Tikhonov regularization}.
\newblock {\em Inverse Problems}, 31(2):025005, 2015.

\bibitem{Ha02}
P.~C. Hansen.
\newblock Deconvolution and regularization with {T}oeplitz matrices.
\newblock {\em Numer. Algorithms}, 29(4):323--378, 2002.

\bibitem{HaPa90}
P.~T. Harker and J.-S. Pang.
\newblock {A damped-Newton method for the linear complementarity problem}.
\newblock In E.~L. Allgower and K.~Georg, editors, {\em Computational solution
  of nonlinear systems of equations}, volume~26 of {\em Lectures in Appl.
  Math.}, pages 265--284. Providence, Amer. Math. Soc., 1990.

\bibitem{HaQu10}
D.~N. Hào and T.~N.~T. Quyen.
\newblock {Convergence rates for Tikhonov regularization of coefficient
  identification problems in Laplace-type equations}.
\newblock {\em Inverse Problems}, 26(12):125014, 2010.

\bibitem{ItKu09}
K.~{Ito} and K.~{Kunisch}.
\newblock {On a semi-smooth Newton method and its globalization.}
\newblock {\em {Math. Program., Ser. A}}, 118(2):347--370, 2009.

\bibitem{JiKhMa12}
B.~Jin, T.~Khan, and P.~Maass.
\newblock A reconstruction algorithm for electrical impedance tomography based
  on sparsity regularization.
\newblock {\em Int. J. Numer. Methods Eng.}, 89(3):337--353, 2012.

\bibitem{JiMa12}
B.~Jin and P.~Maass.
\newblock Sparsity regularization for parameter identification problems.
\newblock {\em Inverse Problems}, 28(12):123001, 2012.

\bibitem{Kn01}
I.~Knowles.
\newblock Parameter identification for elliptic problems.
\newblock {\em J. Comput. Appl. Math.}, 131:175 -- 194, 2001.

\bibitem{KoLaNiSi12}
V.~Kolehmainen, M.~Lassas, K.~Niinimäki, and S.~Siltanen.
\newblock Sparsity-promoting {B}ayesian inversion.
\newblock {\em Inverse Problems}, 28(2):025005, 2012.

\bibitem{LiScRaCe15}
Y.-H. Li, J.~Scarlett, P.~Ravikumar, and V.~Cevher.
\newblock Sparsistency of $\ell_1$-regularized {M}-estimators.
\newblock In G.~Lebanon and S.~V.~N. Vishwanathan, editors, {\em Proceedings of
  the Eighteenth International Conference on Artificial Intelligence and
  Statistics, San Diego}, volume~38, pages 644--652. JMLR Workshop and
  Conference Proceedings, 2015.

\bibitem{LoMaMu12}
D.~A. {Lorenz}, P.~{Maass}, and P.~Q. {Muoi}.
\newblock {Gradient descent for Tikhonov functionals with sparsity constraints:
  theory and numerical comparison of step size rules}.
\newblock {\em {Electron. Trans. Numer. Anal.}}, 39:437--463, 2012.

\bibitem{MiUl14}
A.~Milzarek and M.~Ulbrich.
\newblock A semismooth {N}ewton method with multidimensional filter
  globalization for $\ell_1$-optimization.
\newblock {\em SIAM J. Optim.}, 24(1):298--333, 2014.

\bibitem{MuHaMaPi13}
P.~Q. {Muoi}, D.~N. {H\`ao}, P.~{Maass}, and M.~{Pidcock}.
\newblock {Semismooth Newton and quasi-Newton methods in weighted
  $\ell^1$-regularization}.
\newblock {\em {J. Inverse Ill-Posed Probl.}}, 21(5):665--693, 2013.

\bibitem{Ne07}
Y.~Nesterov.
\newblock {Gradient methods for minimizing composite functions}.
\newblock {\em {Math. Program., Ser. B}}, 140(1):125--161, 2013.

\bibitem{OsPrTu00}
M.~R. {Osborne}, B.~{Presnell}, and B.~A. {Turlach}.
\newblock {A new approach to variable selection in least squares problems}.
\newblock {\em {IMA J. Numer. Anal.}}, 20(3):389--404, 2000.

\bibitem{Pa90}
J.-S. {Pang}.
\newblock Newton's method for {B}-differentiable equations.
\newblock {\em Math. Oper. Res.}, 15(2):pp. 311--341, 1990.

\bibitem{Pa91}
J.-S. {Pang}.
\newblock {A B-differentiable equation-based, globally and locally
  quadratically convergent algorithm for nonlinear programs, complementarity
  and variational inequality problems}.
\newblock {\em {Math. Program.}}, 51(1):101--131, 1991.

\bibitem{Qi93}
L.~Qi.
\newblock Convergence analysis of some algorithms for solving nonsmooth
  equations.
\newblock {\em Math. Oper. Res.}, 18(1):227--244, 1993.

\bibitem{QiSu93}
L.~{Qi} and J.~{Sun}.
\newblock {A nonsmooth version of Newton's method}.
\newblock {\em {Math. Program.}}, 58(3):353--367, 1993.

\bibitem{Ra94}
D.~Ralph.
\newblock Global convergence of damped {N}ewton's method for nonsmooth
  equations via the path search.
\newblock {\em Math. Oper. Res.}, 19(2):352--389, 1994.

\bibitem{RaTe06}
R.~Ramlau and G.~Teschke.
\newblock {A Tikhonov-based projection iteration for nonlinear ill-posed
  problems with sparsity constraints}.
\newblock {\em Numer. Math.}, 104(2):177--203, 2006.

\bibitem{ScKaHoKa12}
T.~{Schuster}, B.~{Kaltenbacher}, B.~{Hofmann}, and K.~S. {Kazimierski}.
\newblock {\em {Regularization methods in Banach spaces}}, volume~10 of {\em
  Radon Series on Computational and Applied Mathematics}.
\newblock Berlin, de Gruyter, 2012.

\bibitem{Sh90}
A.~Shapiro.
\newblock On concepts of directional differentiability.
\newblock {\em J. Optim. Theory Appl.}, 66(3):477--487, 1990.

\bibitem{Ti96}
R.~Tibshirani.
\newblock {Regression shrinkage and selection via the Lasso}.
\newblock {\em {J. R. Statist. Soc. Ser. B Met.}}, {58}({1}):{267--288},
  {1996}.

\bibitem{Ul02}
M.~Ulbrich.
\newblock {\em Nonsmooth {N}ewton-like methods for variational inequalities and
  constrained optimization problems in function spaces}.
\newblock Habilitation thesis, {Technical University Munich}, Munich, 2002.

\bibitem{WiLuetal13}
J.~Williams, Y.~Lu, S.~Niebe, M.~Andersen, K.~Erleben, and J.~C. Trinkle.
\newblock {RPI-MATLAB-Simulator: A Tool for Efficient Research and Practical
  Teaching in Multibody Dynamics}.
\newblock In J.~Bender, J.~Dequidt, C.~Duriez, and G.~Zachmann, editors, {\em
  Workshop on Virtual Reality Interaction and Physical Simulation}, pages
  71--80. The Eurographics Association, 2013.

\bibitem{WrNoFi09}
S.~J. Wright, R.~D. Nowak, and M.~A.~T. Figueiredo.
\newblock Sparse reconstruction by separable approximation.
\newblock {\em IEEE Trans. Signal Process.}, 57(7):2479--2493, 2009.

\bibitem{YaZh11}
J.~Yang and Y.~Zhang.
\newblock Alternating direction algorithms for $\ell_1$-problems in compressive
  sensing.
\newblock {\em SIAM J. Sci. Comput.}, 33(1):250--278, 2011.

\end{thebibliography}
\bibliographystyle{abbrv}

\end{document}